\newcommand{\jump}[1]{\llbracket#1\rrbracket}
\begin{document}

\title*{Stabilised finite element methods for ill-posed problems with
  conditional stability}
\author{Erik Burman}
\institute{Erik Burman \at Department of Mathematics, University College London, London, 
UK--WC1E  6BT, 
United Kingdom, \email{e.burman@ucl.ac.uk}}

%
%
\maketitle

\abstract*{In this paper we discuss the adjoint stabilised finite
element method introduced in \emph{E. Burman, Stabilized finite element methods for nonsymmetric,
  noncoercive and ill-posed problems. Part I: elliptic equations, SIAM
  Journal on Scientific Computing} \cite{Bu13} and how it may be used for the
computation of solutions to problems for which the standard stability theory given
by the Lax-Milgram Lemma or the Babuska-Brezzi Theorem fails. We pay
particular attention to ill-posed problems that have some conditional
stability property and prove (conditional) error estimates in an
abstract framework. As a model problem we consider the elliptic Cauchy
problem and provide a complete numerical analysis for this case.
Some numerical examples are given to illustrate the theory.}

\abstract{In this paper we discuss the adjoint stabilised finite
element method introduced in \emph{E. Burman, Stabilized finite element methods for nonsymmetric,
  noncoercive and ill-posed problems. Part I: elliptic equations, SIAM
  Journal on Scientific Computing}  \cite{Bu13} and how it may be used for the
computation of solutions to problems for which the standard stability theory given
by the Lax-Milgram Lemma or the Babuska-Brezzi Theorem fails. We pay
particular attention to ill-posed problems that have some conditional
stability property and prove (conditional) error estimates in an
abstract framework. As a model problem we consider the elliptic Cauchy
problem and provide a complete numerical analysis for this case.
Some numerical examples are given to illustrate the theory.}
\section{Introduction}
Most methods in numerical analysis are designed making explicit use
of the well-posedness \cite{Hada02} of the underlying continuous problem. This is
natural as long as the problem at hand indeed is well-posed, but even
for well-posed continuous problems the resulting discrete problem may be
unstable if the finite element spaces are not well chosen or if the
mesh-size is not small enough. This is for instance the case for indefinite
problems, such as the Helmholtz problem, or constrained problems such
as Stokes' equations. For problems that are ill-posed on the
continuous level on the other hand the approach makes less sense
and leads to the need of regularization on the continuous level so
that the ill-posed problem can be approximated by solving a sequence
of well-posed problems. The regularization of the continuous problem
can consist for example of Tikhonov regularization \cite{TA77} or a so-called quasi
reversibility method \cite{LL69}. In both cases the underlying problem is
perturbed and the original solution (if it exists) is recovered only in the limit as
some regularization parameter goes to zero. The disadvantage of this
approach from a numerical analysis perspective is that once the
continuous problem has been perturbed to some order,
the accuracy of the computational method must be made to match that of
the regularization. The strength of the regularization on the other
hand must make the continuous problem stable and damp perturbations
induced by errors in measurement data. This leads to a twofold matching problem
where the regularization introduces a perturbation of first order, essentially excluding the efficient use of many tools from
numerical analysis such as high order methods, adaptivity and stabilisation.
The situation is vaguely reminiscent of that in conservation laws
where in the beginning low order methods inspired by viscosity
solution arguments dominated, to later give way for high resolution
techniques, based on flux limiter finite volume schemes or (weakly) consistent stabilised finite element
methods such as the Galerkin Least Squares methods (GaLS) or
discontinuous Galerkin methods (dG) (see for instance \cite{CJST98}
and references therein). These methods allow for high resolution in
the smooth zone while introducing sufficient viscous stabilisation in zones with
nonlinear phenomena such as shocks or rarefaction waves.

In this paper our aim is to advocate a similar shift towards weakly
consistent stabilisation methods for the computation of ill-posed
problems. The philosophy behind this is to cast the problem in the
form of a constrained optimisation problem, that is first discretized,
leading to a possibly unstable discrete problem. The problem is then
regularized on the discrete level using techniques known from the theory of stabilised
finite element methods. This approach has the following potential advantages
some of which will be explored below:
\begin{itemize}
\item the optimal scaling of the penalty parameter with respect to the mesh
  parameter follows from the error analysis;
\item for ill-posed problems where a conditional stability estimate
  holds, error estimates may be derived that are in a certain sense
  optimal
with respect to the discretization parameters;
\item discretization errors and perturbation errors may be handled in
  the same framework;
\item a posteriori error estimates may be used to drive adaptivity;
\item a range of stabilised finite element methods may be used for the
  regularization of the discrete problem;
\item the theory can be adapted to many different problems.
\end{itemize}

Stabilised finite element methods represent a general technique for the
regularization of the standard Galerkin method in order to improve its
stability properties for instance for advection--diffusion problems at high P\'eclet number or
to achieve inf-sup stability for the pressure-velocity coupling in
the Stokes' system. To achieve optimal order convergence the
stabilisation terms must have some consistency properties, i.e. they
decrease at a sufficiently high rate when applied to the exact solution or to any smooth enough function. Such
stabilising terms appear to have much in common with Tikhonov
regularization in inverse problems, although the connection does not
seem to have been made in general. In the recent papers \cite{Bu13,Bu14a} we
considered stabilised finite element methods for problems where
coercivity fails for the continuous problem and showed that optimal
error estimates can be obtained without, or under very weak, conditions on
the physical parameters and the mesh parameters, also for problems where the standard Galerkin
method may fail. 

In the first part of this series \cite{Bu13} we considered the analysis of
elliptic problems without coercivity using duality arguments. The
second part \cite{Bu14a}  was consecrated to problems for which coercivity fails,
but which satisfy the Babuska-Brezzi Theorem, illustrated by the
transport equation. Finally in the note \cite{Bu14b} we extended the
analysis of \cite{Bu13} to the case of ill-posed problems with some {\emph{
  conditional}} stability property. 

Our aim in the present essay is to review and unify some of these results and
give some further examples of how stabilised methods can be used for
the solution of ill-posed problem. To exemplify the theory we
will restrict the discussion to the case of scalar second order
elliptic problems on the form 
\begin{equation}\label{strong}
\mathcal{L} u = f \quad \mbox{ in }\Omega
\end{equation}
where $\mathcal{L}$ is a linear second order elliptic operator, $u$ is the unknown and $f$ is
some known data and $\Omega$ is some simply connected, open subset of
$\mathbb{R}^d$, $d=2,3$. Observe that the operator $\mathcal{L}$ does not
necessarily have to be on divergence form, although we will only consider
this case here to make the exposition concise (see \cite{WaWa15} for an
analysis of well-posed elliptic problems on nondivergence form).

The discussion below will also be restricted to finite element spaces
that are subsets of $H^1(\Omega)$. For the extension of these results
to a nonconforming finite element method we refer to \cite{Bu14c}.

\subsection{Conditional stability for ill-posed problems}
There is a rich literature on conditional stability estimates for
ill-posed problems. Such estimates often take the form of three
sphere's inequalities or Carleman estimates, we refer the reader to
\cite{ARRV09} and references therein. 

The estimates are conditional, in the sense that they only hold under the
condition that the exact solution exists in some Sobolev space $V$, equipped with scalar product
$(\cdot,\cdot)_V$ and associated norm
$\|\cdot\|_V:=(\cdot,\cdot)_V$. Hereinwe will only consider the case
where  $V \equiv H^1(\Omega)$. 
Then we introduce $V_0 \subset V$ and consider the problem: find $u \in V_0$ such that
\begin{equation}\label{abstractprob}
a(u,w) = l(w), \quad \forall w \in W,
\end{equation}
Observe that $V_0$ and $W$ typically are different
subsets of $H^1(\Omega)$ and we do not
assume that $W$ is a subset or $V_0$ or vice versa. The operators
$a(\cdot,\cdot):V\times V \rightarrow \mathbb{R},\, l(\cdot):W
\rightarrow \mathbb{R}$ denote a bounded bilinear and a bounded linear form
respectively. The form $a(\cdot,\cdot)$ is a weak form of
$\mathcal{L} u$. We let $\|\cdot\|_C$ denote the norm for which the
condition must be satisfied and $\|\cdot \|_S$ denote the norm in
which the stability holds.

We then assume that a stability estimate of the
following form holds: if for some $x \in V_0$, with $\|x\|_C \leq E$
there exist $\varepsilon < 1$ and $r \in W'$ such that
\begin{equation}\label{contdep}
\begin{array}{cc}
\left\{\begin{array}{l}
a(x,v) = (r,v)_{\left<W',W \right>} \; \forall v \in W\\
\|r\|_{W'} \leq \varepsilon 
\end{array}\right.;
& \quad  \mbox{ then } \|x\|_S \leq \Xi_E(\varepsilon),
\end{array}
\end{equation}
where $\Xi_E(\cdot):\mathbb{R}^+ \rightarrow
\mathbb{R}^+$ is a smooth, positive, function, depending on the
problem, $\|\cdot\|_S$ and $E$, with $\lim_{s
  \rightarrow 0ˆ+} \Xi_E(s) =0 $. Depending on the problem different
smallness conditions may be required to hold on 
$\varepsilon$. 

The idea is that the stabilised methods we propose may use the estimate
\eqref{contdep} {\emph{directly}} for the derivation of error estimates, without relying on the Lax-Milgram Lemma or the Babuska-Brezzi
Theorem. Let us first make two observations valid also for well-posed problems. When the
assumptions of the Lax-Milgram's lemma are satisifed \eqref{contdep}
holds unconditionally for the energy norm and $\Xi_E(\varepsilon) = C\varepsilon$, for
some problem dependent constant $C$. If for a given problem the adjoint equation $a(v,z)=j(v)$ admits a 
solution $z \in W$, with $\|z\|_{W} \leq E_j$, for
some linear functional $j \in V'$ then
\begin{equation}\label{adjoint_stability}
|j(x)| = |a(x,z)| = |r(z)| \leq  E_j \|r\|_{W'}
\end{equation}
and we see that for this case the condition of the conditional
stability applies to the adjoint solution.

Herein we will focus on the case of the
elliptic Cauchy problem as presented in \cite{ARRV09}. In this problem both
Dirichlet and Neumann data are given on a part of the boundary,
whereas nothing is known on the complement. We will end this section by detailing
 the conditional stability \eqref{contdep} of the elliptic Cauchy problem. We give the result here
 with reduced technical detail and refer to  \cite{ARRV09} for the
 exact dependencies of the constants on the physical parameters and
 the geometry. 
\subsection{Example: the elliptic Cauchy problem}\label{sec:Cauchy_prob}
The problem that we are interested in takes the form
\begin{equation}
\label{eq:Pb}
\left\{
\begin{array}{rcl}
-\nabla \cdot (\sigma \nabla u) + c u& = &  f, \mbox{ in } \Omega\\
u  &=&  0 \mbox{ on } \Gamma_D\\
\partial_n u &=&\psi \mbox{ on } \Gamma_N
\end{array}
\right. 
\end{equation}
where $\Omega \subset \mathbb{R}^d$, $d=2,3$ is a polyhedral (polygonal)
domain with boundary $\partial \Omega$, $\partial_n u:= n^T \cdot \nabla
u$, (with $n$ the outward pointing normal on $\partial \Omega$),
$\sigma \in \mathbb{R}^{d\times d}$ is a symmetric matrix for which
$\exists \sigma_0 \in \mathbb{R}$, $\sigma_0>0$ such that $ y^T\cdot \sigma y >
\sigma_0$ for all $y \in \mathbb{R}^d$ and $c \in \mathbb{R}$. By
$\Gamma_N,\, \Gamma_D$ we denote polygonal subsets of the
boundary $\partial \Omega$, with union $\Gamma_B:= \Gamma_D \cup
\Gamma_N$ and  that overlap on some set of nonzero $(d-1)$-dimensional
measure, $\Gamma_S:=\Gamma_D \cap \Gamma_N
\neq \emptyset$. We denote the
complement of the Dirichlet boundary $\Gamma_D' := \partial \Omega
\setminus \Gamma_D$, the complement of the Neumann boundary $\Gamma_N' := \partial \Omega
\setminus \Gamma_N$ and the complement of their union
$\Gamma_B':= \partial \Omega \setminus \Gamma_B$. To exclude the
well-posed case, we assume that the
$(d-1)$-dimensional measure of $\Gamma_S$ and $\Gamma_B'$ is non-zero. The practical interest in \eqref{eq:Pb} stems from
engineering problems where the 
boundary condition, or its data, is unknown on $\Gamma_B'$, but additional
measurements $\psi$ of the fluxes are available on a part of the accessible
boundary $\Gamma_S$.  This results in an ill-posed reconstruction
problem, that in practice most likely does not have a solution due to  measurement errors in
the fluxes \cite{Belg07}. However if the underlying physical process is stable, (in
the sense that the problem where boundary data is known is well-posed)
we may assume that it allows for a unique solution in the idealized situation
of unperturbed data. This is the approach
we will take below. To this end we assume that $f \in L^2(\Omega)$, $\psi \in L^2(\Gamma_N)$ and
that a unique $u \in H^s(\Omega)$, $s>\tfrac32$ satisfies \eqref{eq:Pb}.
For the derivation of a weak formulation we introduce the spaces
$
V_0:= \{v \in H^1(\Omega): v\vert_{\Gamma_D} = 0 \}$ and
$W:= \{v \in H^1(\Omega): v\vert_{\Gamma_N'} = 0 \},
$ both equipped with the $H^1$-norm and with dual spaces denoted by $V_0'$
and $W'$.

Using these spaces we obtain a weak formulation: find $u \in V_0$ such that
\begin{equation}\label{abstract_prob}
a(u,w) = l(w)\quad \forall w \in W,
\end{equation}
where
$$a(u,w) = \int_\Omega (\sigma \nabla u) \cdot \nabla w  + c u w ~\mbox{d}x,$$ and $$l(w) := 
\int_\Omega f w ~\mbox{d}x + \int_{\Gamma_N} \psi \, w ~\mbox{d}s.
$$

It is known \cite[Theorems 1.7 and 1.9 with Remark 1.8]{ARRV09} that if there exists a solution $u \in
H^1(\Omega)$, to \eqref{abstract_prob}, a
conditional stability of the form \eqref{contdep}
holds provided $0\leq\varepsilon<1$ and 
\begin{equation}\label{stab_1}
\begin{array}{l}
\mbox{$\|u\|_S:=
\|u\|_{L^2(\omega)}$, $\omega \subset \Omega:\,
\mbox{dist}(\omega, \Gamma_B') =: d_{\omega,\Gamma_{B}'}>0$}\\[3mm]
\mbox{
with $\Xi(\varepsilon) = C(E) \varepsilon^\tau$, $C(E)>0$, $\tau:=
\tau(d_{\omega,\Gamma_{B}'}) \in (0,1),\, E=\|u\|_{L^2(\Omega)}$}
\end{array}
\end{equation}
 and for \begin{equation}\label{stab_2}
\begin{array}{l}
\mbox{$\|u\|_S:= \|u\|_{L^2(\Omega)}$ with $\Xi(\varepsilon) = C_1(E)
(|\log(\varepsilon)| + C_2(E))^{-\tau}$}\\[3mm]
\mbox{ with $C_1(E), C_2(E)>0$, $\tau \in
(0,1), \, E=\|u\|_{H^1(\Omega)}.$}
\end{array}
\end{equation}
How to design
accurate computational methods that can fully exploit the power of
conditional stability estimates for their analysis remains a
challenging problem. Nevertheless the elliptic Cauchy problem is particularly well studied. For pioneering
work using logarithmic estimates we refer to \cite{FM86, RHD99} and
quasi reversibility \cite{KS91}. For
work using regularization and/or energy minimisation see \cite{ABF06,ABB06,CN06, HHKC07,HLT11}.
Recently progress has been made using least squares \cite{DHH13} or quasi
reversibility approaches \cite{Bour05,Bour06,BD10a} inspired by conditional
stability estimates \cite{BD10b}. In this paper we draw on our experiences from \cite{Bu14b,Bu14c},
that appear to be the first works where error estimates for stabilised finite element
methods on unstructured meshes have been derived for this type of
problem.  For simplicity we will only consider the
operator $\mathcal{L}u:=-\Delta u + c u$, with $c\in \mathbb{R}$ for
the discussion below. 
\section{Discretization of the ill-posed problem}
We will here focus on discretizations using finite element spaces, but
the ideas in this section are general and may be applied to any
finite dimensional space.

We consider the setting of section \ref{sec:Cauchy_prob}.
Let $\{\mathcal{T}_h\}_h$ denote a family of quasi uniform, shape
regular simplicial triangulations, $\mathcal{T}_h:=\{K\}$, of $\Omega$,
indexed by the maximum simplex diameter $h$. The set of faces of the
triangulation will be denoted by $\mathcal{F}$ and
$\mathcal{F}_{I}$ denotes the subset of interior faces. The unit normal of
a face of the mesh will be denoted $n$, its orientation is arbitrary
but fixed, except on faces in $\partial \Omega$ where the normal is
chosen to point outwards from $\Omega$.
Now let $X_h^k$ denote the finite element space of continuous, piecewise 
polynomial functions on
$\mathcal{T}_h$, 
$$
X_h^k := \{v_h \in H^1(\Omega): v_h\vert_{K} \in \mathbb{P}_k(K),\quad \forall K
\in \mathcal{T}_h\}.
$$
Here $\mathbb{P}_k(K)$ denotes the space of polynomials of degree
less than or equal to $k$ on a simplex $K$.
Letting $(\cdot,\cdot)_X$ denote the $L^2$-scalar product over $X
\subset \mathbb{R}^d$ and $\left<\cdot,\cdot\right>_X$ that over $X
\subset \mathbb{R}^{d-1}$, with associated $L^2$-norms $\|\cdot\|_X$,
we define the broken scalar products and the associated norms by,
\[
(u_h,v_h)_h := \sum_{K \in \mathcal{T}_h}  (u_h,v_h)_K, \quad \|u_h\|_h := (u_h,u_h)_h^{\frac12},
\]
\[
\left< u_h,v_h \right>_{\mathcal{F}} :=  \sum_{F \in \mathcal{F}}
\left<u_h,v_h\right>_F,\quad \|u_h\|_{\mathcal{F}} :=\left<u_h,u_h\right>_{\mathcal{F}}^{\frac12}.
\]
If we consider finite dimensional subspaces $V_{h} \subset V_0$ and
$W_h \subset W$, for instance in the finite element context we may
take $V_{h}:= X_h^k \cap V_0$ and $W_h := X_h^k \cap W$, the
discrete equivalent of problem \eqref{abstractprob} (with $g=0$) reads: find
$u_h:= \sum_{j=1}^{N_{V_h}} u_j \varphi_j \in V_{h}$ such that
\begin{equation}\label{disc_ill}
a(u_h,\phi_i) = l(\phi_i), \quad i=1,\hdots, N_{W_h}
\end{equation}
where the $\{\varphi_i\}$ and $\{\phi_i \}$ are suitable bases for
$V_h$ and $W_h$ respectively and $N_{V_h}:= \mbox{dim}(V_h)$,  $N_{W_h}:= \mbox{dim}(W_h)$
This formulation may be written as the linear system 
\[
A U = L,
\]
where $A$ is an $N_{W_h} \times N_{V_h}$ matrix,
with coefficients $A_{ij}:=a(\varphi_j,\phi_i)$, $U =
(u_1,\hdots,u_{N_V})^T$ and $L= (l(\phi_1),\hdots,
l(\phi_{N_{W_h}}))^T$. Observe that since we have not assumed $N_{V_h}
= N_{W_h}$ this system may not be square, but even if it is, it may
have zero eigenvalues. This implies
\begin{enumerate}
\item non-uniqueness: there exists $\tilde U \in \mathbb{R}^{N_{V_h}}\setminus\{{\boldmath{0}}\}$
    such that $A \tilde U = 0$;
\item non-existence: there exists $L \in \mathbb{R}^{N_{V_h}}$ such
    that $L \not \in \mbox{Im}(A)$.
\end{enumerate}
These two problems actually appear also when discretizing well-posed
continuous models. Consider the Stokes' equation for incompressible
elasticity, for this problem the well-known challenge is to design a
method for which the pressure variable is stable and the velocity
field discretely divergence free. Indeed the discrete
spaces for pressures and velocities must be well-balanced. Otherwise,
there may be spurious pressure modes in the solution, comparable to
point 1. above, or if the pressure space is too rich the solution may
``lock'', implying that only the zero velocity satisfies the
divergence free constraint, which is comparable to 2. above. Drawing
on the experience of the stabilisation of Stokes' problem this analogy
naturally suggests the following approach to the stabilisation of \eqref{disc_ill}.
\begin{itemize}
\item Consider \eqref{disc_ill} of the form $a(u_h,w_h) = l(w_h)$ as the constraint for a
  minimisation problem;
\item minimise some (weakly) consistent stabilisation together
  with a penalty for the boundary conditions (or other data) under the
  constraint;
\item stabilise the Lagrange multiplier (since discrete inf-sup
  stability fails in general).
\end{itemize}
To this end we introduce the Lagrangian functional:
\begin{equation}\label{Lagrange}
\boxed{
\L(u_h,z_h) := \frac12 s_V(u_h - u,u_h - u) - \frac12 s_W(z_h,z_h)+ a_h(u_h,z_h) - l_h(z_h) }
\end{equation}
where $s_V(u_h - u,u_h - u)$ and $s_W(z_h,z_h)$ represents a
penalty term, imposing measured data through the presence of $s_V(u_h-u,u_h-u)$ and
symmetric, weakly
consistent stabilisations for the primal and adjoint problems
respectively. The forms $a_h(\cdot,\cdot)$ and $l_h(\cdot)$ are discrete
realisations of $a(\cdot,\cdot)$ and $l(\cdot)$, that may account for
the nonconforming case where $V_h \not \subset V$ and $W_h \not \subset W$.

 The discrete method that we propose is given by the
 Euler-Lagrange equations of \eqref{Lagrange}, find $(u_h,z_h) \in V_h \times W_h$ such that
\begin{equation}\label{FEM}
\begin{array}{rcl}
a_h(u_h,w_h) - s_W(z_h,w_h) &=&l_h(w_h) \\[3mm]
a_h(v_h,z_h) + s_V(u_h,v_h) &=& s_V(u,v_h),
\end{array}
\end{equation}
for all $(v_h,w_h) \in V_h \times W_h$. This results in a square
linear system regardless of the dimensions of $V_h$ and $W_h$.
Note the appearance of
$s_V(u,v_h)$ in the right hand side of the second equation of \eqref{FEM}. This means only stabilisations for which
$s_V(u,v_h)$ can be expressed using known data may be used. This
typically is the case for residual based stabilisations, but also
allows for the inclusion of measured data in the computation in a
natural fashion. The stabilising terms in \eqref{FEM} are used both to
include measurements, boundary conditions and regularization. In order
to separate these effects we will sometimes write
\[
s_x(\cdot,\cdot):= s^D_x (\cdot,\cdot) + s^S_x(\cdot,\cdot), \quad x= V,W
\]
where the $s^D$ contribution is associated with assimilation of data
(boundary or measurements) and the $s^S$ contribution is associated
with the stabilising terms. For the Cauchy problem $s^D_V(u,v_h)$
depends on $\psi$ and $s^S_V(u,v_h)$ may depend on $f$ as we shall see below.

Observe that the second equation of \eqref{FEM} is a finite
element discretization of the dual problem associated to the
pde-constraint of \eqref{Lagrange}. Hence, assuming that a unique
solution exists for the given data, the solution to approximate is $z=0$. The discrete
function $z_h$ will most likely not be zero, since it is perturbed by the
stabilisation operator acting on the solution $u_h$, which in general
does not coincide with the stabilisation acting on $u$. The precise
requirements on the forms will be given in the next section together
with the error analysis. 
We also introduce the following compact form
of the formulation \eqref{FEM}, find $(u_h,z_h) \in V_h \times W_h$ such that
\begin{equation}\label{compactFEM}
A_h[(u_h,z_h),(v_h,w_h)] = L_h(v_h,w_h) \mbox{ for all } (v_h,w_h) \in V_h \times W_h,
\end{equation}
where
\begin{equation}\label{global_A}
A_h[(u_h,z_h),(v_h,w_h)] := a_h(u_h,w_h) - s_W(z_h,w_h) + a_h(v_h,z_h) + s_V(u_h,v_h)
\end{equation}
and
\[
L_h(v_h,w_h) := l_h(w_h) + s_V(u,v_h).
\]
We will end this section by giving some examples of the construction
of the discrete forms. To reduce the amount of generic constants we
introduce the notation $ a \lesssim b$ for $a \leq C b$ where $C$
denotes a positive constant independent of the mesh-size $h$.
\subsection{Example: discrete bilinear forms and penalty terms for
  the elliptic Cauchy problem}
For the elliptic Cauchy problem of section \ref{sec:Cauchy_prob} we
define $V^k_h$ and $W^k_h$ to be $X_h^k$ (the
superscript will be dropped for general $k$). Then we
use information on the boundary conditions to design a form
$a_h(\cdot,\cdot)$ that is both forward and adjoint
consistent. A penalty term is also added to enforce the boundary condition.

\begin{equation}\label{disc_a2}
a_h(u_h,v_h):=a(u_h,v_h)- \left<
 \partial_n u_h, v_h \right>_{\Gamma_N'}- \left<
 \partial_n v_h, u_h\right>_{\Gamma_D}
\end{equation}
\begin{equation}\label{penalty_p2}
s^D_V(u_h ,w_h) := \gamma_{D}\left< h^{-1} u_h, w_h
\right>_{\Gamma_D}+ \gamma_{D} \left< h \partial_n u_h , \partial_n w_h
\right>_{\Gamma_N},
\end{equation}
where $\gamma_D \in \mathbb{R}_+$ denotes a penalty parameter that
for simplicity is taken to be the same for all the $s^D(\cdot,\cdot)$
terms, it follows that, if $u=g$ on $\Gamma_D$,
\[
s^D_V(u ,w_h) := \gamma_{D}\left< h^{-1} g, w_h
\right>_{\Gamma_D}+ \gamma_{D} \left< h\, \psi ,\partial_n w_h
\right>_{\Gamma_N}.
\]
The adjoint boundary penalty may then be written
\begin{equation}\label{penalty_a2}
s^D_W(z_h ,v_h) := \gamma_D  \left< h^{-1} z_h, v_h
\right>_{\Gamma_N'}+ \gamma_D \left< h \,\partial_n z_h,\partial_n
v_h
\right>_{\Gamma_D'}.
\end{equation}
We assume that the computational mesh $\mathcal{T}_h$ is such that the
boundary subdomains consist of the union of boundary element faces,
i.e. the boundaries of $\Gamma_D$ and $\Gamma_N$ coincide with
element edges.
Finally we let $l_h(v_h)$ coincide with $l(v_h)$ for
unperturbed data. Observe that there is much more freedom in the choice
of the stabilisation for $z_h$ since the exact solution satisfies
$z=0$. We will first discuss the methods so that they are consistent
also in the case $z\ne 0$, in order to facilitate the connection to a
larger class of control problems. Then we will suggest a stronger
stabilisation for $z_h$.
\subsection{Example: Galerkin Least Squares stabilisation}\label{GaLS}
For the stabilisation term we first consider the classical Galerkin
Least Squares stabilisation. Observe that for the finite element
spaces considered herein, the GaLS stabilisation in the interior of
the elements must be complemented with a jump contribution on the
boundary of the element. If $C^1$-continuous approximation spaces are used this
latter contribution may be dropped. First consider the least squares contribution,
\begin{equation}\label{GaLS_stab}
s^S_V(u_h,v_h) := \gamma_S (h^2 \mathcal{L} u_h,
\mathcal{L} v_h)_h +  \gamma_S \left< h 
  \jump{ \partial_n u_h}, \jump{\partial_n  v_h} \right>_{\mathcal{F}_I}, \,
\gamma_S \in \mathbb{R}_+.
\end{equation}
Here $\jump{\partial_n  v_h}$ denotes the jump of the normal derivative
of $v_h$ over an element face $F$.
It then follows that, considering sufficiently smooth solutions, $u\in H^{s}(\Omega)$, $s>3/2$,
\[
s^S_V(u,v_h) := \gamma_S (h^2 f,
\mathcal{L} v_h)_h.
\]
Similarly we define
\[
s^S_W(z_h,w_h) =  \gamma_S (h^2 \mathcal{L}^*
z_h, \mathcal{L}^* w_h)_h +  \gamma_S \left<
  h \jump{\partial_n  z_h}, \jump{\partial_n  w_h} \right>_{\mathcal{F}_I}.
\]
For symmetric operators $\mathcal{L}$ we see that
$s^S_W(\cdot,\cdot)\equiv s^S_V(\cdot,\cdot)$, however in the presence of
nonsymmetric terms they must be evaluated separately. 
\subsection{Example: Continuous Interior Penalty stabilisation}\label{CIP}
In this case we may choose the two stabilisations to be the same,
$s^S_W(\cdot,\cdot) \equiv s^S_V(\cdot,\cdot)$ and
\begin{equation}\label{CIP_stab}
s^S_V(u_h,v_h) =  \gamma_S \left< h^{3} \jump{
    \Delta u_h}, \jump{ \Delta v_h} \right>_{\mathcal{F}_I} + \gamma_S \left< h \jump{\partial_n  u_h}, \jump{\partial_n  v_h} \right>_{\mathcal{F}_I}.
\end{equation}

\subsection{Example: Stronger adjoint stabilisation}
Observe that since the exact solution satisfies $z=0$ we can also use
the adjoint stabilisation
\begin{equation}\label{eq:nonadjointcons_pen}
s^S_W(z_h,w_h) =  \gamma_S (\nabla z_h,\nabla w_h)_\Omega 
\end{equation}
This simplifies the formulation for non-symmetric problems when the
GaLS method is used and reduces the stencil, but the
resulting formulation is no longer adjoint consistent and optimal
$L^2$-estimates may no longer be proved in the well-posed case (see \cite{Bu13} for a discussion).
In this case the formulation corresponds to a weighted least squares
method. This is easily seen by eliminating $z_h$ from the
formulation \eqref{FEM}. 
\subsection{Penalty parameters}
Above we have introduced the penalty parameters $\gamma_S$ and
$\gamma_D$. The size of these parameters play no essential role for
the discussion below. Indeed the convergence orders for unperturbed
data are obtained only under the assumption that $\gamma_S,
\gamma_D>0$. 
Therefore the explicit dependence of the constants in the estimates
will not be tracked. Only in some key estimates, relating to stability
and preturbed data, will we indicate the dependence on the parameters
in terms of $\gamma_{min}:= \min(\gamma_S,\gamma_D)$ or $\gamma_{max}:=. \max(\gamma_S,\gamma_D)$.

\section{Hypothesis on forms and interpolants}\label{sec:hypo}
To prepare for the error analysis we here introduce assumptions on the
bilinear forms. The key properties that are needed are a discrete
stability estimate, that the form
$a_h(\cdot,\cdot)$ is continuous on a norm that is controlled by the
stabilisation terms and that the finite element residual can be
controlled by the stabilisation terms. To simplify the presentation we
will introduce the space $H^s(\Omega)$, with $s \in \mathbb{R}_+$
which corresponds to smoother functions than those in $V$ for which
$a_h(u,v_h)$ and $s_V(u,v_h)$ always are well defined. This typically allows us to treat the data
part $s_V^D$ and the stabilisation part $s_V^S$ together using strong
consistency. A more detailed analysis separating the two contributions
in $s_V$ and handling the conformity error of $a_h$ for $u\in V$
allows an analysis under weaker regularity assumptions.
\begin{description}
\item[\bf{Consistency}:]
If $u \in V \cap H^s(\Omega)$ is the solution of \eqref{strong}, then the following Galerkin
orthogonality holds
\begin{equation}\label{galortho}
a_h(u_h - u,w_h) - s_W(z_h,w_h) = l_h(w_h) - l(w_h),\quad \mbox{ for
  all } w_h \in W_h.
\end{equation} 
\item[\bf{Stabilisation operators:}]
We consider positive semi-definite,
symmetric stabilisation operators,
$
s_V: V_h \times  V_h \mapsto \mathbb{R},\quad s_W: W_h \times W_h \mapsto \mathbb{R}.
$
We assume that $s_V(u,v_h)$, with $u$ the solution of
\eqref{abstractprob} is explicitly known, it may depend on data from
$l(\cdot)$ or measurements of $u$.
Assume that both $s_V$ and $s_W$ define semi-norms on $H^s(\Omega) +V_h$ and
$H^s(\Omega) +W_h$ respectively,
\begin{equation}\label{semi_S}
|v+v_h|_{s_Z} := s_Z(v+v_h,v+v_h)^{\frac12}, \forall v\in
H^s(\Omega),\, v_h \in Z_h, \mbox{ with } Z = V,W.
\end{equation}
\item[\bf{Discrete stability:}]
There exists a semi-norm, $|(\cdot,\cdot)|_{\mathcal{L}}:(V_h+H^s(\Omega)) \times (W_h +
H^s(\Omega)) \mapsto \mathbb{R}$, such that $|v|_{s_V} + |w|_{s_W}
\lesssim |(v,w)|_{\mathcal{L}}$ for $v,w \in (V_h+H^s(\Omega)) \times (W_h +
H^s(\Omega))$. The semi-norm  $|(\cdot,\cdot)|_{\mathcal{L}}$ satisfies the following
stability. There exists $c_s > 0$ independent of $h$ such that for all $(\nu_h,\zeta_h)
\in V_h \times W_h$ there holds
\begin{equation}\label{disc_stab}
c_s |(\nu_h,\zeta_h)|_{\mathcal{L}} \leq \sup_{(v_h,w_h) \in V_h
    \times W_h} \frac{A_h[(\nu_h,\zeta_h),(v_h,w_h) ]}{|(v_h,w_h)|_{\mathcal{L} }}.
\end{equation}
\item[\bf{Continuity:}]
There exists interpolation operators $i_V: V
\mapsto V_h$ and $i_W: W \mapsto W_h \cap W$ and norms $\|\cdot\|_{*,V}$ and $\|\cdot\|_{*,W}$ defined
on $V+V_h$ and $W$ respectively, such that
\begin{equation}
a_h(v-i_V v, w_h) \lesssim \|v-i_V v\|_{*,V} |(0,w_h)|_{\mathcal{L}},\, \forall v \in
V\cap H^s(\Omega),\, w_h \in W_h \label{cont1}
\end{equation}
and for $u$ solution of \eqref{abstractprob},
\begin{equation}
a(u - u_h, w - i_W w) \lesssim
\|w - i_W w\|_{*,W}~\eta_V(u_h),\, \forall w \in W, \label{cont2}
\end{equation}
where the a posteriori quantity $\eta_V(u_h):V_h \mapsto \mathbb{R}$ satisfies $\eta_V(u_h) \lesssim |(u-u_h,0)|_{\mathcal{L}}$ for sufficiently
smooth $u$.
\item[\bf{Nonconformity:}]
We assume that the following bounds hold
\begin{equation}\label{lhs-conf}
|a_h(u_h,i_W w) - a(u_h,i_W w)| \lesssim \eta_V(u_h)  \|w\|_W,
\end{equation}
and 
\begin{equation}\label{rhs-conf}
|l_h(i_W w) - l(i_W w) | \leq \delta_l(h) \|w\|_W,
\end{equation}
where $\delta_l:\mathbb{R}^+ \mapsto \mathbb{R}^+$, is some continuous
functions such that $\lim_{x \rightarrow 0^+} \delta_l(x) = \delta_0$,
with $\delta_0=0$ for
unperturbed data.

Also assume that there exists an interpolation operator $r_V:H^1(\Omega)+V_h \mapsto V_0\cap V_h$ such
that
\begin{equation}\label{conf_error_rV}
\|r_V u_h - u_h\|_S +  \|r_V u_h - u_h\|_C +  \|r_V u_h - u_h\|_V \lesssim \eta_V(u_h).
\end{equation}
We assume that $r_V$ has optimal approximation properties in the
$V$-norm and the $L^2$-norm for functions in $V_0 \cap H^s(\Omega)$.
\item[\bf{Approximability:}]
 We assume that the interpolants $i_V:V \mapsto V_h$, $i_W:W\mapsto
 W_h \cap W$
have the following approximation and stability properties. For all
$v\in V\cap H^s(\Omega)$
there holds,
\begin{equation}\label{approxint}
| (v-i_V v,0)|_{\mathcal{L}} + \|v - i_V
v\|_{*,V}\leq C_V(v) h^{t}, \mbox{ with } t \ge 1.
\end{equation}
The factor $C_V(v)>0$ will typically depend on some Sobolev norm of
$v$. For $i_W$ we assume that for some $C_W>0$ there holds
\begin{equation}\label{Wstab}
  |i_W w|_{\mathcal{L}} + \|w - i_W w\|_{*,W}\leq   C_W \|w\|_W, \quad \forall w \in W.
\end{equation}
For smoother functions we assume that $i_W$ has approximation
properties similar to \eqref{approxint}.
\end{description}
\subsection{Satisfaction of the assumptions for the methods discussed}\label{satisfaction}
We will now show that the above assumptions are satisfied for the
method \eqref{disc_a2}-\eqref{penalty_p2}
associated to the elliptic Cauchy problem of section \ref{sec:Cauchy_prob}. We will
assume that $u \in V \cap H^s(\Omega)$ with $s>\frac32$. Consider first the
bilinear form given by \eqref{disc_a2}. To prove the Galerkin
othogonality an integration
by parts shows that
\begin{multline*}
a_h(u,w_h) = (\mathcal{L} u,w_h) + \left<\partial_n u, w_h
\right>_{\Gamma_N}= (f,w_h)+\left<\psi, w_h
\right>_{\Gamma_N}  \\
 = l(w_h)-l_h(w_h)+a_h(u_h,w_h) - s_W(z_h,w_h).
\end{multline*}

It is immediate by inspection that the stabilisation operators defined
in sections \ref{GaLS} and \ref{CIP} both define the semi-norm \eqref{semi_S}.
Now define the semi-norm for discrete stability
\begin{multline}\label{Lnorm}
|(u_h,z_h)|_{\mathcal{L}} := \|h
  \mathcal{L} u_h \|_h + \|h
  \mathcal{L}^* z_h \|_h + \|h^{\frac12} \jump{\partial_n
    u_h}\|_{\mathcal{F}_I}+ \|h^{\frac12} \jump{\partial_n
    z_h}\|_{\mathcal{F}_I}\\
+ \|h^{-\frac12} u_h\|_{\Gamma_D} + \|h^{\frac12} \partial_n u_h\|_{\Gamma_N} + \|h^{-\frac12} z_h\|_{\Gamma_N'}+ \|h^{\frac12} \partial_n z_h\|_{\Gamma_D'} .
\end{multline}
If the adjoint stabilisation \eqref{eq:nonadjointcons_pen} is used a
term $\|z_h\|_{H^1(\Omega)}$ may be added to the right hand side of \eqref{Lnorm}.
Observe that for the GaLS method there holds for $c_s \approx
\gamma_{min} > 0$,
\[
c_s |(u_h,z_h)|_{\mathcal{L}}^2 \leq  A_h[(u_h,z_h),(u_h,-z_h)]
\]
which implies \eqref{disc_stab}.
For the CIP-method one may also prove the inf-sup stability
\eqref{disc_stab}, we detail the proof in appendix.

For the continuity \eqref{cont1} of the form $a_h(\cdot,\cdot)$
defined by equation \eqref{disc_a2}, integrate by parts, from the left
factor to the right, with $\phi \in V_h+H^s(\Omega)$ and apply the
Cauchy-Schwarz inequality,
\begin{multline*}
a_h(\phi,w_h) \leq |(\phi,\mathcal{L}^* w_h)_h| + \left<|\phi|,
  |\jump{\partial_n w_h}| \right>_{\mathcal{F}_I} + |\left< \partial_n \phi,
  w_h \right>_{\Gamma_N'}| + |\left<\phi,
  \partial_n w_h \right>_{\Gamma_D'}| \\[3mm]
\leq \left(\|h^{-1} \phi\|_{\Omega}+ \|h^{-\frac12} \phi\|_{\mathcal{F}_I\cup \Gamma_D'}+
 \|h^{\frac12} \partial_n \phi\|_{\Gamma_N'}\right)  |(0,w_h)|_{\mathcal{L}}.
\end{multline*}
From this inequality we identify the norm $\|\cdot\|_{*,V}$ to be
\[
\|\phi\|_{*,V}:=  \|h^{-1} \phi\|_\Omega+ \|h^{-\frac12} \phi\|_{\mathcal{F}_I\cup \Gamma_D'}+
  \|h^{\frac12} \partial_n \phi\|_{\Gamma_N'}.
\]
Similarly to prove \eqref{cont2} for the form \eqref{disc_a2} let
$\varphi \in W$ and integrate by parts in
$a(u-u_h,\varphi)$, identify the functional 
$\eta_V(u_h)$ and apply the Cauchy-Schwarz inequality with
suitable weights,
\begin{multline}\label{contcont}
a(u - u_h,\varphi) = (f,\varphi) + \left<\psi,\varphi \right>_{\Gamma_N}
- a(u_h,\varphi) \\
\leq  |(f- \mathcal{L} u_h,\varphi)_h| + \left<
  |\jump{\partial_n u_h}|,|\varphi| \right>_{\mathcal{F}_I}+ |\left<
  \psi - \partial_n u_h,
  \varphi \right>_{\Gamma_N}|\\
\leq ( \|h^{-1} \varphi\|_{\Omega}+ \|h^{-\frac12}
\varphi\|_{\mathcal{F}_I\cup \Gamma_N}) \,\eta_V(u_h),
\end{multline}
where we define
\[
\eta_V(u_h):=\|h(f- \mathcal{L} u_h)\|_h+\|h^{-\frac12} \jump{\partial_n u_h}\|_{\mathcal{F}_I}+\|h^{\frac12}  (\psi - \partial_n u_h)\|_{\Gamma_N}+ \|h^{-\frac12} u_h\|_{\Gamma_D}
\]
with $\eta_V(u_h) = |(u-u_h,0)|_{\mathcal{L}}$ and we may identify
\[
\|\varphi\|_{*,W}:=  \|h^{-1} \varphi\|_{\Omega}+ \|h^{-\frac12} \varphi\|_{\mathcal{F}_I\cup \Gamma_N}.
\]
It is important to observe that the continuity \eqref{contcont} holds
for the continuous form $a(\phi,\varphi)$, but not for the discrete
counterpart $a_h(\phi,\varphi)$, since it is not well defined for
$\varphi \in W$. 

For the definition of $i_V$ and $i_W$ we may use
Scott-Zhang type interpolators, preserving the boundary conditions on
$V_0$ and $W$, for $r_V$ we use an nodal interpolation operator in the
interior such that $r_V u\vert_{Gamma_D}=0$. For $u\in H^s(\Omega)$ with $s > \tfrac32$ the
approximation estimate \eqref{approxint} then holds with 
\begin{equation}\label{conv_order}
t:=
\min(s-1,k) \mbox{ and } C_V(u) \lesssim
\|u\|_{H^{t+1}(\Omega)}.
\end{equation} 
The bound
\eqref{Wstab} holds by inverse and trace inequalities and the $H^1$-stability of the Scott-Zhang
interpolation operator. It is also known that \[
\|u_h - r_V u_h\|_{H^1(\Omega)} \lesssim \|h^{-\frac12}
u_h\|_{\Gamma_D}\lesssim \eta_V(u_h).
\]
from which \eqref{conf_error_rV} follows. 
The following relation shows \eqref{lhs-conf},
\begin{multline}\label{conf_bound}
|a_h(u_h,i_W w) - a(u_h,i_W w)| = |\underbrace{\left<\partial_n u_h,{i_W
  w} \right>_{\Gamma_N'}}_{=0} + \left<\partial_n i_W
  w, u_h\right>_{\Gamma_D}| \\
\leq \|h^{\frac12} \partial_n i_W
  w\|_{\Gamma_D} \|h^{-\frac12} u_h\|_{\Gamma_D} \lesssim
  \|w\|_{H^1(\Omega)} \eta_V(u_h).
\end{multline}
Where we used that $i_W w\vert_{\Gamma_N'} = 0$, since $i_W w \in W$.

\section{Error analysis using conditional stability}
We will now derive an error analysis using only the continuous
dependence \eqref{contdep}. First we prove that assuming 
smoothness of the exact solution the error converges with the
rate $h^t$ in the stabilisation semi-norms defined in equation
\eqref{semi_S}, provided that there are no perturbations in data. Then we show that the computational error satisfies a perturbation
equation in the form \eqref{abstract_prob}, and that the right hand side of the perturbation equation can be
upper bounded by the stabilisation semi-norm. Our error bounds are
then a consequence of the assumption
\eqref{contdep}. 
\begin{lemma}\label{lem:stab_conv}
Let $u \in V_0 \cap H^s(\Omega) $ be the solution of
\eqref{abstractprob} and $(u_h,z_h)$ the solution of
the formulation \eqref{compactFEM}. Assume that
\eqref{galortho}, \eqref{disc_stab}, \eqref{cont1} and \eqref{approxint} hold. Then
$$
|(u - u_h,z_h)|_{\mathcal{L}} \lesssim C_V(u) (1+c_s^{-1}) h^{t}.
$$
\end{lemma}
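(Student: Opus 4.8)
The plan is to estimate $|(u-u_h,z_h)|_{\mathcal{L}}$ by a split into an interpolation part and a discrete part, using the discrete inf-sup stability \eqref{disc_stab} on the discrete remainder. First I would write $u - u_h = (u - i_V u) + (i_V u - u_h)$ and denote $\xi_h := i_V u - u_h \in V_h$, so that $(u-u_h, z_h) = (u - i_V u, 0) + (\xi_h, z_h)$ with $(\xi_h,z_h) \in V_h \times W_h$. By the triangle inequality for the semi-norm $|(\cdot,\cdot)|_{\mathcal{L}}$, it suffices to bound $|(\xi_h, z_h)|_{\mathcal{L}}$, since $|(u - i_V u, 0)|_{\mathcal{L}} \le C_V(u) h^t$ is given by \eqref{approxint}.

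Next I would apply the discrete stability \eqref{disc_stab} to the pair $(\xi_h, z_h)$: there exists $(v_h, w_h) \in V_h \times W_h$ with $|(v_h,w_h)|_{\mathcal{L}} = 1$ (or one works with the supremum directly) such that
\[
c_s |(\xi_h, z_h)|_{\mathcal{L}} \le \frac{A_h[(\xi_h, z_h),(v_h,w_h)]}{|(v_h,w_h)|_{\mathcal{L}}}.
\]
Now expand $A_h[(\xi_h,z_h),(v_h,w_h)] = A_h[(i_V u - u, z_h),(v_h,w_h)] + A_h[(u - u_h, z_h),(v_h,w_h)]$. Wait — more carefully, since $A_h$ is linear in its first slot, write $A_h[(\xi_h,z_h),\cdot] = A_h[(i_V u, z_h),\cdot] - A_h[(u_h, z_h),\cdot]$. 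The key is the consistency/Galerkin orthogonality \eqref{galortho}: it gives $a_h(u_h - u, w_h) - s_W(z_h, w_h) = l_h(w_h) - l(w_h)$, and for unperturbed data the right-hand side vanishes, so that $a_h(u - u_h, w_h) = s_W(z_h, w_h)$ for all $w_h \in W_h$. Using the definition \eqref{global_A} of $A_h$, I would combine this with the second block to rewrite $A_h[(i_V u - u_h, z_h),(v_h,w_h)]$ purely in terms of the interpolation error $i_V u - u$ acting through $a_h$ and through $s_V$. Concretely one is left with terms of the form $a_h(i_V u - u, w_h)$, which \eqref{cont1} bounds by $\|i_V u - u\|_{*,V} |(0,w_h)|_{\mathcal{L}} \le \|i_V u - u\|_{*,V}$, plus terms $s_V(i_V u - u, v_h) = s_V(i_V u, v_h) - s_V(u,v_h)$; writing $s_V(i_V u, v_h) - s_V(u,v_h) = s_V(i_V u - u, v_h)$ and using Cauchy–Schwarz for the semi-norm $|\cdot|_{s_V}$ together with $|v_h|_{s_V} \lesssim |(v_h,w_h)|_{\mathcal{L}} \le 1$ gives a bound by $|i_V u - u|_{s_V} \le |(i_V u - u, 0)|_{\mathcal{L}}$. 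All of these are controlled by $C_V(u) h^t$ via \eqref{approxint}.

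Collecting, one obtains $c_s |(\xi_h,z_h)|_{\mathcal{L}} \lesssim C_V(u) h^t$, hence $|(\xi_h,z_h)|_{\mathcal{L}} \lesssim c_s^{-1} C_V(u) h^t$, and the triangle inequality together with $|(u - i_V u, 0)|_{\mathcal{L}} \le C_V(u) h^t$ yields $|(u - u_h, z_h)|_{\mathcal{L}} \lesssim (1 + c_s^{-1}) C_V(u) h^t$, which is the claim. The main obstacle I anticipate is the bookkeeping in the step where consistency is inserted into $A_h$: one must be careful that the adjoint block $a_h(v_h, z_h)$ and the stabilisation block $s_V(\xi_h, v_h)$ recombine correctly so that $z_h$ itself never appears undifferentiated on the right — the point is precisely that $z=0$ is the exact adjoint state, so $z_h$ only enters through $s_W(z_h, w_h)$, which Galerkin orthogonality identifies with $a_h(u - u_h, w_h)$, i.e. again with an interpolation-error quantity after adding and subtracting $i_V u$. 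Keeping the roles of $(v_h,w_h)$ versus $(\xi_h, z_h)$ straight in $A_h[\cdot,\cdot]$ is the only delicate part; everything else is a direct application of \eqref{cont1}, \eqref{approxint}, and Cauchy–Schwarz.
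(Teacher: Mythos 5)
Your proposal is correct and follows essentially the same route as the paper: split $u-u_h$ into $u-i_Vu$ plus a discrete part, bound the former by \eqref{approxint}, apply the inf-sup condition \eqref{disc_stab} to the discrete pair, use the Galerkin orthogonality \eqref{galortho} together with the second equation of \eqref{FEM} to reduce $A_h$ to the interpolation-error terms $a_h(u-i_Vu,w_h)+s_V(u-i_Vu,v_h)$, and finish with \eqref{cont1}, Cauchy--Schwarz and \eqref{approxint}. The one bookkeeping point (which you yourself flagged as the delicate step): with your orientation $\xi_h=i_Vu-u_h$ the term $-s_W(z_h,w_h)$ in $A_h[(\xi_h,z_h),(v_h,w_h)]$ adds to, rather than cancels against, the $s_W$ contribution coming from \eqref{galortho}, and similarly the $s_V$ blocks do not recombine into $s_V(u-i_Vu,v_h)$; the paper instead sets $\xi_h=u_h-i_Vu$, so that $(\xi_h,z_h)=(u_h,z_h)-(i_Vu,0)$ and the discrete equations can be inserted directly, after which everything collapses as you describe.
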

\begin{proof}
Let $\xi_h :=
u_h- i_V u$. By the triangle
inequality $$|(u - u_h,z_h)|_{\mathcal{L}} \leq  |(u - i_V
u,0)|_{\mathcal{L}}+ |(\xi_h,z_h)|_{\mathcal{L}}$$ and the
approximability \eqref{approxint} it is enough to study the error in $ |(\xi_h,z_h)|_{\mathcal{L}}$.
By the discrete stability \eqref{disc_stab}
$$
c_s |(\xi_h,z_h)|_{\mathcal{L}}   \leq \sup_{(v_h,w_h) \in V_h
    \times W_h} \frac{A_h[(\xi_h,z_h),(v_h,w_h) ]}{|(v_h,w_h)|_{\mathcal{L} }}.
$$
Using equation \eqref{galortho} we then have
$$
c_s |(\xi_h,z_h)|_{\mathcal{L}}    \leq \sup_{(v_h,w_h) \in V_h
    \times W_h} \frac{l_h(w_h)-l(w_h)+a_h(u-i_V u,w_h) + s_V(u-i_V u,v_h)}{|(v_h,w_h)|_{\mathcal{L} }}.
$$
Under the assumption of unperturbed data and applying the continuity \eqref{cont1} in the third term of the right
hand side and the Cauchy-Schwarz inequality in the
last we have
\[
a_h(u-i_V u,w_h) + s_V(u-i_V u,v_h) \lesssim (\|u-i_V u\|_{V,*} +
\gamma_{max} |(u-i_V u,0)|_{\mathcal{L}}) |(v_h,w_h)|_{\mathcal{L}}
\]
and hence
\[
c_s |(\xi_h,z_h)|_{\mathcal{L}} \lesssim \|u-i_V u\|_{V,*} +
\gamma_{max}|(u-i_V u,0)|_{\mathcal{L}}.
\]
Applying \eqref{approxint} we may deduce
\[
c_s |(\xi_h,z_h)|_{\mathcal{L}}  \lesssim C_V(u) h^t.
\]
\qed
\end{proof}
\begin{theorem}\label{thm:cont_dep}
Let $u\in V_0 \cap H^s(\Omega)$ be the solution of
\eqref{abstractprob} and $(u_h,z_h)$ the solution of the formulation
\eqref{compactFEM} for which \eqref{galortho}-\eqref{Wstab} hold. Assume that the problem \eqref{abstractprob} has the
stability property \eqref{contdep} and that $u$ and $u_h$ satisfy
the condition for stability. Let $c_a$ define a positive constant
depending only on the constants of inequalities \eqref{cont2},
\eqref{lhs-conf}, \eqref{conf_error_rV} and \eqref{Wstab} and define
the a posteriori quantity
\begin{equation} \label{eta}
\eta(u_h,z_h):= \eta_V(u_h) + |z_h|_{s_W}.
\end{equation} 
Then, if $\eta(u_h,z_h)<c_a^{-1}$, there holds
\begin{equation}\label{aposteriori}
\|u - u_h\|_S\lesssim \Xi_E(c_a\eta(u_h,z_h)) + \eta_V(u_h)
\end{equation}
with $\Xi_E$ independent of $h$.

For sufficiently smooth $u$ there holds
\begin{equation}\label{apriori}
\eta(u_h,z_h)
 \lesssim  C_V(u) (1+c_s^{-1}) h^{t}.
\end{equation}
\end{theorem}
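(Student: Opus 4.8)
The plan is to prove the two assertions in turn, the a posteriori bound \eqref{aposteriori} being the substantial part. The idea there is to build from the computed pair $(u_h,z_h)$ a function lying in $V_0$ that solves \eqref{abstractprob} up to a residual controlled by $\eta(u_h,z_h)$, and then to apply the conditional stability \eqref{contdep} to it. Accordingly, let $r_V u_h\in V_0\cap V_h$ be the conforming reconstruction of \eqref{conf_error_rV} and set $x:=r_V u_h-u\in V_0$. I would first check admissibility of $x$ in \eqref{contdep}: by \eqref{conf_error_rV} one has $\|r_V u_h-u_h\|_C\lesssim\eta_V(u_h)$, so the hypothesis that $u$ and $u_h$ satisfy the condition for stability, together with the triangle inequality, bounds $\|x\|_C$ by a fixed multiple of $E$ (harmlessly absorbed into the $E$-dependence of $\Xi_E$). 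The heart of the argument is then the residual estimate for $x$.

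To obtain it, fix $v\in W$ and decompose
\[
a(x,v)=a(r_V u_h-u_h,v)+a(u_h-u,v-i_W v)+a(u_h-u,i_W v).
\]
The first term is bounded by $\|r_V u_h-u_h\|_V\|v\|_W\lesssim\eta_V(u_h)\|v\|_W$ using boundedness of $a$ and \eqref{conf_error_rV}; the second by the continuity \eqref{cont2} and the stability of $i_W$ in \eqref{Wstab}, giving $\lesssim\|v-i_W v\|_{*,W}\,\eta_V(u_h)\lesssim\|v\|_W\,\eta_V(u_h)$. For the third term I would exploit that $i_W v\in W_h\cap W$: testing the first equation of \eqref{FEM} (equivalently, \eqref{compactFEM} with $(v_h,w_h)=(0,i_W v)$) gives $a_h(u_h,i_W v)=l_h(i_W v)+s_W(z_h,i_W v)$, while $u$ solving \eqref{abstractprob} gives $a(u,i_W v)=l(i_W v)$; hence
\[
a(u_h-u,i_W v)=\big(a(u_h,i_W v)-a_h(u_h,i_W v)\big)+\big(l_h(i_W v)-l(i_W v)\big)+s_W(z_h,i_W v).
\]
These three pieces are estimated, respectively, by $\eta_V(u_h)\|v\|_W$ via the nonconformity bound \eqref{lhs-conf}, by $\delta_l(h)\|v\|_W$ via \eqref{rhs-conf} (with $\delta_l\equiv0$ for unperturbed data), and by $|z_h|_{s_W}\,|i_W v|_{s_W}\lesssim|z_h|_{s_W}\|v\|_W$ using the Cauchy--Schwarz inequality for the semi-norm $s_W$, the bound $|i_W v|_{s_W}\lesssim|i_W v|_{\mathcal{L}}$ and \eqref{Wstab}. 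Collecting everything, the functional $r:v\mapsto a(x,v)$ on $W$ satisfies $\|r\|_{W'}\leq c_a\,\eta(u_h,z_h)$ for unperturbed data (with an additional $\delta_l(h)$ in general), where $c_a$ depends only on the constants of \eqref{cont2}, \eqref{lhs-conf}, \eqref{conf_error_rV} and \eqref{Wstab}, as claimed.

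It remains to invoke \eqref{contdep}: the smallness condition $\varepsilon:=c_a\eta(u_h,z_h)<1$ is precisely the assumption $\eta(u_h,z_h)<c_a^{-1}$, so \eqref{contdep} yields $\|r_V u_h-u\|_S\leq\Xi_E(c_a\eta(u_h,z_h))$, and the triangle inequality together with $\|r_V u_h-u_h\|_S\lesssim\eta_V(u_h)$ from \eqref{conf_error_rV} gives \eqref{aposteriori}. For the a priori bound \eqref{apriori}, I would only observe that $\eta(u_h,z_h)=\eta_V(u_h)+|z_h|_{s_W}\lesssim|(u-u_h,0)|_{\mathcal{L}}+|(0,z_h)|_{\mathcal{L}}\lesssim|(u-u_h,z_h)|_{\mathcal{L}}$, the first inequality being the control of $\eta_V(u_h)$ and of $|z_h|_{s_W}$ by the $\mathcal{L}$-semi-norm granted (for sufficiently smooth $u$) in the continuity and discrete stability hypotheses, after which Lemma \ref{lem:stab_conv} closes the estimate. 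The main obstacle is the third term of the decomposition: one must route $a(u_h-u,i_W v)$ through the discrete scheme and the two nonconformity estimates while keeping the constant dependence confined to the four inequalities named in the statement, and one must not overlook the verification that $x$ is admissible in \eqref{contdep}, without which the conditional estimate cannot be applied.
\qed
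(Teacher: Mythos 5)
Your proposal is correct and follows essentially the same route as the paper: you split the error via the conforming reconstruction $r_V u_h$, verify the stability condition for the conforming part, derive the residual bound by a Strang-type decomposition whose pieces coincide (up to sign and grouping) with the paper's terms $T_1$--$T_5$, apply \eqref{contdep}, and obtain the a priori bound from $\eta(u_h,z_h)\lesssim|(u-u_h,z_h)|_{\mathcal{L}}$ and Lemma \ref{lem:stab_conv}. The only cosmetic difference is that you invoke the first equation of \eqref{FEM} directly where the paper cites the Galerkin orthogonality \eqref{galortho}, which is equivalent.
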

\begin{proof}
We will first write the error as one $V_0$-conforming part and one
discrete nonconforming part. 
It then follows that $e:=u-u_h  = \underbrace{u - r_V u_h}_{= \tilde e
\in V_0} + \underbrace{r_V u_h - u_h}_{= e_h \in V_h}$. Observe that
\[
\|u-u_h\|_{S} \leq \|u-r_V u_h\|_{S} + \|u_h - r_V u_h\|_{S} \leq
\|\tilde e\|_{S} + \eta_V(u_h).
\] 
Since both $u$ and $u_h$ satisfy a stability condition it is also
satisfied for $\tilde e$ 
\begin{equation}\label{V_conf_cond_stab}
\|\tilde e\|_C \leq \|u\|_C + \|u_h\|_C + \|e_h\|_C \lesssim  \|u\|_C +
\|u_h\|_C + \eta_V(u_h) \lesssim 2 E + C_V(u)(1+ c_s^{-1}) h^t.
\end{equation}
Here we used the property that $\eta_V(u_h) \lesssim
|(u-u_h,0)_{\mathcal{L}}| \leq  C_V(u)(1+ c_s^{-1}) h^t$, which
follows from Lemma \ref{lem:stab_conv}. 
Now observe that 
\begin{equation}\label{error_rep}
a(\tilde e,w) = a(e,w) - a(e_h,w) = l(w) - a(u_h,w) - a(e_h,w)
\end{equation}
and since the right hand side is independent of $u$ we identify $r \in W'$ such
that $\forall w \in W$,
\begin{equation}\label{pert_right}
(r,w)_{\left<W',W\right>} := l(w) - a(u_h,w) - a(e_h,w).
\end{equation}
It follows that $\tilde e$ satisfies equation \eqref{abstract_prob} with
right hand side $(r,w)_{\left<W',W\right>}$. Hence since $\tilde e$
satisfies the stability condition estimate 
\eqref{contdep} holds for $\tilde e$.
We must then show that $\|r\|_{W'}$ can be made small under mesh
refinement. We proceed using an argument similar to that of Strang's
lemma and \eqref{galortho} to obtain
\begin{multline}\label{pert_strang}
(r,w)_{\left<W',W\right>} =\underbrace{a(u-u_h,w-i_W w)}_{T_1} +  \underbrace{l(i_W w) - l_h(i_W w)}_{T_2}\\ + \underbrace{a_h(u_h,i_W
w) - a(u_h,i_W w)}_{T_3} - \underbrace{a(e_h,w)}_{T_4}-\underbrace{s_W(z_h,i_W w)}_{T_5}.
\end{multline}
We now use the assumptions of section \ref{sec:hypo} to bound the terms $T_1$-$T_5$. First by
\eqref{cont2} and \eqref{Wstab} there holds
\[
T_1 = a(u-u_h,w-i_W w) \lesssim \eta(u_h,0) \|w-i_W
w\|_{*,W} \lesssim \eta(u_h,0) \|w\|_W.
\]
By the assumption of unperturbed data and exact quadrature we have
$T_2=0$. Using the bound of the conformity error \eqref{lhs-conf} we
obtain for $T_3$
\[
T_3 = a_h(u_h,i_W w) - a(u_h,i_W w) \lesssim \eta(u_h,0) \|w\|_W.
\]
For the fourth term we use the continuity of $a(\cdot,\cdot)$, \eqref{Wstab} and the
properties of $r_V$ to write
\[
T_4 = a(e_h,i_W w) \lesssim \|e_h\|_V \|i_W w\|_W \lesssim \eta(u_h,0)\| w\|_W.
\]
Finally we use the Cauchy-Schwarz inequality and the stability of
\eqref{Wstab} to get the bound
\[
T_5 = s_W(z_h,i_W w) \leq |z_h|_{s_W}  |i_W w|_{s_W}  \lesssim  \eta(0,z_h) \| w\|_W.
\]
Collecting the above bounds on $T_1$,...,$T_5$ in a bound for
\eqref{pert_strang} we obtain
\[
|(r,w)_{\left<W',W\right>}| \lesssim \eta(u_h,z_h) \|w\|_W.
\]
We conclude that there exists $c_a>0$ such that
$
\|r\|_{W'} < c_a \eta(u_h,z_h)$. Applying the conditional
stability we obtain the bound
\[
\|\tilde e\|_S \lesssim \Xi_E(c_a \eta(u_h,z_h))
\]
where the constants in $\Xi_E$ are bounded thanks to the assumptions on
$u$ and $u_h$ and \eqref{V_conf_cond_stab}.

The a
posteriori estimate \eqref{aposteriori} follows using the triangle
inequality and \eqref{conf_error_rV},
\begin{equation}
\|u-u_h\|_S = \|\tilde e + e_h\|_S \leq \|\tilde e\|_S +
\|e_h\|_S
\lesssim \Xi_E(\eta(u_h,z_h)) + \eta_V(u_h).
\end{equation}
The upper bound of \eqref{apriori} is then an immediate
consequence of the inequality $$\eta(u_h,z_h) \leq
|(u-u_h,z_h)|_{\mathcal{L}}$$ and Lemma \ref{lem:stab_conv}.
\qed
\end{proof}
\begin{remark}
Observe that if weak consistency is used for the proof of Lemma
\ref{lem:stab_conv} and the data and stabilisation parts of the term
$s_V$ are treated separately, then we may show that the a posteriori part of Theorem \ref{thm:cont_dep}
holds assuming only $u \in V$.
\end{remark}
\subsection{Application of the theory to the Cauchy problem}
Since the formulation \eqref{compactFEM} with the forms defined by
\eqref{disc_a2}-\eqref{penalty_a2} and the stabilisations
\eqref{GaLS_stab}, \eqref{CIP_stab} or \eqref{eq:nonadjointcons_pen}
satisfies the assumptions of 
Theorem \ref{thm:cont_dep} as shown in section \ref{satisfaction}, in principle the error estimates hold for these methods
when applied to an elliptic Cauchy problem \eqref{eq:Pb} which admits
a unique solution in $V_0 \cap H^s(\Omega)$, $s>\frac32$. The
order $t$ and the constant $C_V(u)$ of the estimates are given by
\eqref{conv_order}. 

However, some important questions are left
unanswered related to the a priori bounds on the discrete solution $u_h,z_h$.
Observe that we assumed that the discrete solution $u_h$ satisfies the
condition for the stability estimate $\|u_h\|_{C} \leq E$. For the
Cauchy problem this means that $\|u_h\|_{H^1(\Omega)} \leq E$
uniformly in $h$. As we shall see below, this bound can be proven only
under additional regularity assumptions on $u$. Nevertheless we can
prove sufficent stability on the discrete problem to ensure that the
matrix is invertible.
We will first show that the
$\mathcal{L}$-semi-norm \eqref{Lnorm} is a norm on $V_h \times W_h$, which immediately implies the existence
of a discrete solution through \eqref{disc_stab}.
\begin{lemma}
Assume that $|(\cdot,\cdot)|_{\mathcal{L}}$ is defined by \eqref{Lnorm}
and the penalty operator \eqref{penalty_p2}. Then
$|(v_h,y_h)|_{\mathcal{L}}$ is a norm on  $V_h \times W_h$.
Moreover for all $h>0$ and all $k \ge 1$ there exists $u_h,z_h \in V_h \times W_h$ solution to \eqref{compactFEM}, with \eqref{disc_a2}-\eqref{penalty_a2} and either
\eqref{GaLS_stab} or \eqref{CIP_stab} as primal and adjoint
stabilisation or \eqref{eq:nonadjointcons_pen} for adjoint stabilisation.
\end{lemma}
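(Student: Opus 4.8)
The plan is to prove two things: first, that $|(\cdot,\cdot)|_{\mathcal{L}}$, which is \emph{a priori} only a semi-norm, is in fact a norm on the finite-dimensional space $V_h \times W_h$; second, that this implies the existence of a discrete solution. The second point is the easy one: once $|(\cdot,\cdot)|_{\mathcal{L}}$ is a norm, the discrete \textit{inf-sup} condition \eqref{disc_stab} (established in section \ref{satisfaction} for the GaLS method via $c_s |(u_h,z_h)|_{\mathcal{L}}^2 \le A_h[(u_h,z_h),(u_h,-z_h)]$, and in the appendix for CIP) says that the square linear system \eqref{compactFEM} has a trivial kernel, hence is invertible, hence has a (unique) solution for every right-hand side. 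So the real content is showing definiteness of the semi-norm.

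For definiteness I would argue as follows. Suppose $(v_h, y_h) \in V_h \times W_h$ with $|(v_h,y_h)|_{\mathcal{L}} = 0$. Looking at \eqref{Lnorm}, every individual term vanishes; in particular $\mathcal{L} v_h = 0$ on each element, the jumps $\jump{\partial_n v_h}$ vanish on all interior faces, $v_h = 0$ on $\Gamma_D$, and $\partial_n v_h = 0$ on $\Gamma_N$. The vanishing elementwise residual together with the vanishing normal-derivative jumps means $v_h$ is a \emph{global} solution of $-\Delta v_h + c v_h = 0$ in $\Omega$ in the $H^1$ sense (the jump terms are exactly the obstruction to gluing the elementwise equations into a global weak equation). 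Now $v_h$ is a genuine $H^1(\Omega)$ function — in fact piecewise polynomial and hence smooth enough — solving the homogeneous equation $\mathcal{L} v_h = 0$ with homogeneous Cauchy data $v_h = 0$, $\partial_n v_h = 0$ on $\Gamma_S = \Gamma_D \cap \Gamma_N$, a set of positive $(d-1)$-measure. By the unique continuation property for second-order elliptic operators (equivalently, the uniqueness part of the conditional stability statement \eqref{contdep}, or the Cauchy-problem uniqueness cited from \cite{ARRV09}), $v_h \equiv 0$ in $\Omega$. The same reasoning applied to $y_h$ — now using $\mathcal{L}^* y_h = 0$, vanishing jumps, $y_h = 0$ on $\Gamma_N'$ and $\partial_n y_h = 0$ on $\Gamma_D'$, with $\Gamma_S' := \Gamma_N' \cap \Gamma_D' = \Gamma_B'$ of positive measure by hypothesis — gives $y_h \equiv 0$. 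Hence $|(\cdot,\cdot)|_{\mathcal{L}}$ is definite, and since it is clearly a semi-norm it is a norm on $V_h \times W_h$.

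The main obstacle is the unique continuation step: one must be careful that $v_h$, built from piecewise polynomials with no jumps in the normal derivative and zero elementwise residual, really is an $H^1(\Omega)$ weak solution of the homogeneous PDE to which the Cauchy-uniqueness result applies, and that the Cauchy data is controlled on a set of positive measure — here one uses that, by assumption, $\Gamma_S$ and $\Gamma_B'$ both have non-zero $(d-1)$-dimensional measure, so the relevant overlap sets for the primal and adjoint traces are non-degenerate. A subtlety worth a remark is that the argument is the discrete analogue of the conditional-stability uniqueness with $\varepsilon = 0$; no mesh condition is needed because everything takes place in the finite-dimensional space and the conclusion $v_h \equiv 0$ is exact. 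Once definiteness is in hand, the existence statement for all $h>0$ and all $k \ge 1$ follows verbatim from \eqref{disc_stab} together with the square structure of \eqref{compactFEM}, as noted above.
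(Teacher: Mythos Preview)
Your proposal is correct and follows essentially the same approach as the paper: show definiteness of the semi-norm by arguing that $|(v_h,0)|_{\mathcal{L}}=0$ forces $v_h\in H^2(\Omega)$ to be a global solution of the homogeneous Cauchy problem, then invoke the uniqueness encoded in the conditional stability (the paper cites \eqref{stab_2} directly, you phrase it as unique continuation from $\Gamma_S$, which is equivalent), and similarly for $y_h$; existence then follows from the inf-sup condition \eqref{disc_stab} and the square structure of the system. Your treatment is in fact slightly more explicit than the paper's in spelling out why vanishing elementwise residuals plus vanishing normal-derivative jumps yield a global weak solution, and in identifying the relevant overlap set $\Gamma_{N}'\cap\Gamma_{D}'=\Gamma_{B}'$ for the adjoint variable.
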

\begin{proof}
The proof is a consequence of norm equivalence on discrete
spaces. We know that $|(v_h,0)|_{\mathcal{L}}$ is a semi-norm. To show
that it is actually norm observe that if $|(v_h,0)|_{\mathcal{L}} = 0$
then $v_h \in H^2(\Omega)$, $\mathcal L v_h\vert_{\Omega} = \partial_n
u_h \vert_{\Gamma_N} = u_h \vert_{\Gamma_D} = 0$. It follows that
$v_h \in H^1(\Omega)$ satisfies \eqref{abstract_prob} with zero data. Therefore by
\eqref{stab_2} $v_h = 0$ and we conclude that
$|(v_h,0)|_{\mathcal{L}}$ is a norm. A similar argument yields the
upper bound for $y_h$.
The existence of discrete solution then follows from the inf-sup
condition \eqref{disc_stab}. If we assume that $L_h(v_h,w_h)=0$ we
immediately conclude that $|(u_h,z_h)|_{\mathcal{L}}=0$ 
by which existence and uniqueness of the discrete solution follows.
\qed
\end{proof}
This result also shows that the method has a unique continuation
property. This property in general fails for the standard Galerkin
method \cite{Sno99}.

In the estimate of Theorem \ref{thm:cont_dep} above we have assumed
that both the exact solution $u$ and the computed approximation $u_h$
satisfy the condition for stability, in particular we need $\|u - r_V
u_h\|_{C} \leq E$ . Since $u$
is unknown we have no choice but assuming that it satisfies the
condition and $u_h$ on the other hand is known so the constant $E$ for
$u_h$ or $r_V u_h$ can be checked a posteriori. From
a theoretical point of view it is however interesting to ask if the
stability of $u_h$ can be deduced from the assumptions on $u$ and the
properties of the numerical scheme only. This question in its general
form is open. We will here first give a complete answer in the case of
piecewise affine approximation of the elliptic Cauchy problem and then
make some remarks on the high order case.

\begin{proposition}\label{stab_cond}
Assume that $\|\cdot\|_C$ is bounded by the $H^1$-norm, that $u \in
H^2(\Omega)$ is the solution to \eqref{abstract_prob} and $(u_h,z_h) \in
V_h \times W_h$, with $k=1$, is the solution to \eqref{compactFEM} with the
bilinear forms defined by \eqref{disc_a2}--\eqref{penalty_p2} and
\eqref{CIP_stab}. Then there holds
\begin{equation}\label{uh_apriori}
\|u_h\|_C \lesssim \|u\|_{H^2(\Omega)}.
\end{equation}
\end{proposition}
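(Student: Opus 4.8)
The plan is to bound $\|u_h\|_{H^1(\Omega)}$ by first controlling the conforming part $r_V u_h$ via a standard stability estimate for the elliptic problem with full boundary data, and then adding back the nonconforming remainder $u_h - r_V u_h$, which is already controlled by $\eta_V(u_h)$ through \eqref{conf_error_rV}. Write $u_h = r_V u_h + (u_h - r_V u_h)$; by \eqref{conf_error_rV} we have $\|u_h - r_V u_h\|_V \lesssim \eta_V(u_h)$, and by Lemma~\ref{lem:stab_conv} (with $k=1$, $s=2$, $t=1$) we know $\eta_V(u_h) \leq |(u-u_h,z_h)|_{\mathcal L} \lesssim C_V(u)(1+c_s^{-1}) h \lesssim \|u\|_{H^2(\Omega)} h$. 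So the nonconforming part is harmless and it remains to bound $\|r_V u_h\|_{H^1(\Omega)}$.

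For the conforming part $v := r_V u_h \in V_0 \cap V_h$, I would test the weak form against the solution of a well-posed auxiliary problem. The idea is that $v$ vanishes on $\Gamma_D$, so if we additionally control $\partial_n v$ on $\Gamma_N$ we are in the setting of a mixed boundary value problem with data on $\Gamma_B = \Gamma_D \cup \Gamma_N$, which \emph{is} well-posed (recall the ill-posedness came precisely from data being prescribed on overlapping pieces $\Gamma_S$ and missing on $\Gamma_B'$; for the \emph{forward} problem with Dirichlet data on $\Gamma_D$ and Neumann data on $\Gamma_N' = \partial\Omega \setminus \Gamma_N$... — rather, the natural well-posed companion is: Dirichlet on $\Gamma_D$, Neumann on $\Gamma_D' = \partial\Omega\setminus\Gamma_D$). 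Concretely: $v$ satisfies $-\Delta v + cv = g$ in $\Omega$ in a distributional/elementwise sense with $g$ close to $f$, $v = 0$ on $\Gamma_D$, and $\partial_n v$ on $\Gamma_D'$ bounded by the stabilisation. Then a standard a priori estimate for this well-posed mixed problem gives $\|v\|_{H^1(\Omega)} \lesssim \|g\|_{(H^1)'} + \|\partial_n v\|_{\Gamma_D', -1/2} + (\text{boundary data on }\Gamma_D)$. The right-hand side pieces must now be traced back: $g$ differs from $f = \mathcal L u$ by the elementwise residual $\mathcal L(v - u_h) + \mathcal L(u_h) - f$, the first controlled by $\|v - u_h\|_V$ (inverse estimate, but $v - u_h$ is piecewise affine so $\mathcal L(v-u_h)$ is just $c(v-u_h)$ — clean), the second being exactly $-(f - \mathcal L u_h)$ whose scaled norm sits in $\eta_V(u_h)$; and $\partial_n v$ on $\Gamma_D'$: on $\Gamma_N \subset \Gamma_D'$... here one uses that $\Gamma_D' = \Gamma_N \cup (\Gamma_B' \cap \Gamma_D')$, with the $\Gamma_N$ part controlled by $\|h^{1/2}\partial_n u_h\|_{\Gamma_N} \lesssim \eta_V(u_h)$ plus the jump/flux stabilisation, and the remaining part handled likewise.

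Putting it together: $\|u_h\|_C \lesssim \|u_h\|_{H^1(\Omega)} \lesssim \|r_V u_h\|_{H^1(\Omega)} + \eta_V(u_h) \lesssim (\text{residual and flux terms}) + \eta_V(u_h) \lesssim \eta_V(u_h) + \|u\|_{H^2(\Omega)}$, and then one more application of $\eta_V(u_h) \lesssim \|u\|_{H^2(\Omega)} h \lesssim \|u\|_{H^2(\Omega)}$ closes the estimate. The restriction to $k=1$ enters because for piecewise affine functions the elementwise operator $\mathcal L u_h = -\Delta u_h + c u_h = c u_h$ is well-behaved and the residual $\mathcal L(r_V u_h - u_h)$ is controlled by an $L^2$-norm without an adverse inverse-inequality power of $h^{-1}$; for higher $k$ this is exactly what fails and why the proposition is stated only for $k=1$.

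The main obstacle I expect is the careful identification of the \emph{well-posed} auxiliary problem satisfied by $v = r_V u_h$ and verifying that the flux data $\partial_n v$ on the complement $\Gamma_D'$ is genuinely controlled — in particular on the portion $\Gamma_B' \cap \Gamma_D'$ where we prescribed nothing in the original scheme. One must either argue that the adjoint stabilisation $s_W$ (which penalises $h^{1/2}\partial_n z_h$ on $\Gamma_D'$ and whose effect enters through the coupling in \eqref{FEM}) feeds a bound on $\partial_n u_h$ there, or reformulate so that the auxiliary problem only needs data on $\Gamma_D \cup \Gamma_N$ (for which the stabilisation terms $\|h^{-1/2}u_h\|_{\Gamma_D}$ and $\|h^{1/2}\partial_n u_h\|_{\Gamma_N}$ in \eqref{Lnorm} suffice directly) and then absorb the mismatch using an interior regularity / elliptic-lifting argument together with the assumed $H^2$-regularity of the exact solution $u$. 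This geometric bookkeeping of which boundary pieces carry controllable data is where the real work lies; the functional-analytic estimates themselves are routine once the auxiliary problem is fixed.
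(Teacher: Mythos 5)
There is a genuine gap, and it sits exactly where you say "the real work lies": the auxiliary boundary value problem you want to invoke for $v=r_Vu_h$ is not well-posed. A mixed problem is well-posed only if Dirichlet or Neumann data is prescribed on \emph{all} of $\partial\Omega$, but the stabilisation terms in \eqref{Lnorm} only control $u_h$ on $\Gamma_D$ and $\partial_n u_h$ on $\Gamma_N$, and by assumption $\Gamma_B'=\partial\Omega\setminus(\Gamma_D\cup\Gamma_N)$ has positive measure. So the problem satisfied by $v$ \emph{is} the ill-posed Cauchy problem, and "a standard a priori estimate for this well-posed mixed problem" does not exist. The escape routes you sketch do not close this: the adjoint penalty $s_W^D$ controls $\partial_n z_h$ on $\Gamma_D'$, not $\partial_n u_h$, and the coupling in \eqref{FEM} transfers no such bound to the primal variable on $\Gamma_B'$; and appealing instead to the conditional stability \eqref{contdep} is circular, since its hypothesis $\|x\|_C\le E$ is precisely the bound \eqref{uh_apriori} you are trying to prove. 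Your reading of why $k=1$ matters (that $\mathcal Lu_h=cu_h$ for affine functions) is also not the operative reason.

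The paper's proof avoids boundary value problems entirely and is a one-line inverse-estimate argument: write $\|u_h\|_{H^1}\le\|i_Vu\|_{H^1}+\|i_Vu-u_h\|_{H^1}$, bound the first term by $\|u\|_{H^2(\Omega)}$, and for the second use that the gradient of the piecewise affine function $i_Vu-u_h$ is piecewise \emph{constant} (with tangential continuity across faces, so its interelement jump is $\jump{\partial_n(i_Vu-u_h)}\,n$). The discrete Poincar\'e inequality for piecewise constants of \cite{EGH02}, together with the boundary control $\|h^{-1/2}(i_Vu-u_h)\|_{\Gamma_D}$, then gives
\begin{equation*}
\|i_Vu-u_h\|_{H^1(\Omega)}\lesssim h^{-1}\,|i_Vu-u_h|_{s_V}\lesssim h^{-1}\,|(i_Vu-u_h,0)|_{\mathcal L},
\end{equation*}
and the factor $h^{-1}$ is exactly absorbed by the rate $|(i_Vu-u_h,0)|_{\mathcal L}\lesssim C_V(u)h$ from Lemma~\ref{lem:stab_conv} with $t=1$. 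This is where the restriction to $k=1$ genuinely enters: for higher order the gradient is no longer piecewise constant, the jump penalty no longer controls the full $H^1$-seminorm this way, and the paper instead suggests adding a weakly consistent term $(h^{2t}\nabla u_h,\nabla v_h)_\Omega$ to $s_V$.
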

\begin{proof}
Observe that by a standard Poincar\'e inequality followed by a discrete Poincar\'e inequality for piecewise
constant functions \cite{EGH02} we have
\begin{multline*}
\|u_h\|_{H^1(\Omega)} \leq \|i_V u\|_{H^1(\Omega)} +
\|i_V u - u_h\|_{H^1(\Omega)} \lesssim \|u\|_{H^2(\Omega)} + h^{-1}
|i_V u - u_h|_{s_V} \\\lesssim \|u\|_{H^2(\Omega)} + h^{-1}
|(i_V u - u_h,0)|_{\mathcal{L}} \lesssim  \|u\|_{H^2(\Omega)}.
\end{multline*}
\qed
\end{proof}

A simple way to obtain the conditional stability in the high order case, if the order $t$ is
known is to add a term $(h^{2t} \nabla u_h,\nabla v_h)_\Omega$ to
$s_V(\cdot,\cdot)$.
This term will be weakly consistent to the right order and implies the
estimate
\[
\|u_h\|_{H^1(\Omega)} \lesssim  h^{-t} |u_h|_{s_V} \lesssim \|u\|_{H^{t+1}(\Omega)}.
\]
An experimental value for $t$ can be obtained by studying the
convergence of $|u_h|_{s_V}+|z_h|_{s_W}$ under mesh refinement.
To summarize we present the error estimate that we obtain for the
Cauchy problem \eqref{eq:Pb} when piecewise affine approximation is
used in the following Corollary to Theorem \ref{thm:cont_dep}.
\begin{corollary}\label{cor:convergence}
Let $u\in H^2(\Omega)$ be the solution to the elliptic
Cauchy problem \eqref{eq:Pb} and $u_h,z_h \in V_h \times W_h$ the
solution of \eqref{compactFEM}, with \eqref{disc_a2}-\eqref{penalty_a2} and either
\eqref{CIP_stab} as primal and adjoint
stabilisation or \eqref{eq:nonadjointcons_pen} for adjoint
stabilisation. Then the conclusion of Theorem \ref{thm:cont_dep} holds
with $\|\cdot \|_S:= \|\cdot\|_{\omega}$,
with $\omega \subseteq \Omega$, the function $\Xi_E$ and $E$ given by
\eqref{stab_1} or \eqref{stab_2} and
\[
\eta(u_h,z_h):=
\|h(f -
  \mathcal{L} u_h) \|_h + \|h \jump{\partial_n
    u_h}\|_{\mathcal{F}_I}\\
+ \|h^{-\frac12} u_h\|_{\Gamma_D} + \|h^{\frac12} (\psi - \partial_n
u_h)\|_{\Gamma_N}+ |z_h|_{s_W}.
\]
In particular there holds for $h$ suffiently small,
\begin{equation}\label{localbound}
\|u - u_h\|_{\omega} \lesssim h^\tau \, \mbox{ with } 0<\tau<1 \mbox{
  when } \mbox{dist}(\omega,\Gamma_B')>0
\end{equation}
and 
\begin{equation}\label{globalbound}
\|u - u_h\|_{\Omega} \lesssim (|\log(C_1 h)|+C_2)^{-\tau} \, \mbox{ with } 0<\tau<1.
\end{equation}
\end{corollary}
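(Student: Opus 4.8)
\medskip
\noindent\textbf{Proof plan.}
The plan is to obtain this as a direct specialisation of Theorem~\ref{thm:cont_dep}, so the work splits into three parts: (i) check that the concrete method meets all the hypotheses of that theorem, (ii) insert the two conditional stability functions of \cite{ARRV09}, and (iii) read off the resulting rates. For (i), the properties \eqref{galortho}--\eqref{Wstab} were already verified for the forms \eqref{disc_a2}--\eqref{penalty_a2} together with \eqref{CIP_stab} as primal stabilisation and either \eqref{CIP_stab} or \eqref{eq:nonadjointcons_pen} as adjoint stabilisation in Section~\ref{satisfaction}; in particular \eqref{conv_order} gives, for $k=1$ and $u\in H^2(\Omega)$ (so $s=2$), the exponent $t=\min(s-1,k)=1$ and $C_V(u)\lesssim\|u\|_{H^2(\Omega)}$. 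The only remaining hypothesis of Theorem~\ref{thm:cont_dep} is that the discrete solution satisfy the stability condition $\|u_h\|_C\le E$ uniformly in $h$ (which then transfers to $\tilde e=u-r_Vu_h$ through \eqref{V_conf_cond_stab}); this is exactly the content of Proposition~\ref{stab_cond}, yielding $\|u_h\|_C\lesssim\|u\|_{H^2(\Omega)}$ for $k=1$. The stated form of $\eta(u_h,z_h)$ then comes from unwinding the definition of $\eta_V(u_h)$ from Section~\ref{satisfaction} for $\mathcal{L}u=-\Delta u+cu$ and adding $|z_h|_{s_W}$; moreover \eqref{apriori} and Lemma~\ref{lem:stab_conv} give $\eta(u_h,z_h)\lesssim C_V(u)(1+c_s^{-1})h\lesssim h$, so the smallness requirement $\eta(u_h,z_h)<c_a^{-1}$ holds for $h$ small, and $\eta_V(u_h)\lesssim h$.

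For (ii)--(iii) I would substitute the two stability profiles into the a posteriori bound \eqref{aposteriori}. When $\mathrm{dist}(\omega,\Gamma_B')>0$, \eqref{stab_1} gives $\Xi_E(\varepsilon)=C(E)\varepsilon^\tau$ with $E=\|u\|_{L^2(\Omega)}$ and $\|\cdot\|_S=\|\cdot\|_{L^2(\omega)}$, so that
\[
\|u-u_h\|_{\omega}\lesssim\bigl(c_a\,\eta(u_h,z_h)\bigr)^\tau+\eta_V(u_h)\lesssim h^\tau+h\lesssim h^\tau,
\]
since $\tau\in(0,1)$; this is \eqref{localbound}. For the global estimate \eqref{stab_2} gives $\Xi_E(\varepsilon)=C_1(E)(|\log\varepsilon|+C_2(E))^{-\tau}$ with $E=\|u\|_{H^1(\Omega)}$, and writing $c_a\,\eta(u_h,z_h)\le Ch$ and absorbing the fixed constant into $C_1$,
\[
\|u-u_h\|_{\Omega}\lesssim\bigl(|\log(C_1 h)|+C_2\bigr)^{-\tau}+h\lesssim\bigl(|\log(C_1 h)|+C_2\bigr)^{-\tau},
\]
because the logarithmically decaying term dominates the algebraically small $\eta_V(u_h)\lesssim h$; this is \eqref{globalbound}.

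The only genuinely delicate ingredient is the uniform-in-$h$ bound $\|u_h\|_C\le E$ of Proposition~\ref{stab_cond}: it is what keeps the constants appearing inside $\Xi_E$ bounded as $h\to0$, and unlike the rest of the argument it is not a formal consequence of the abstract framework, resting instead on a discrete Poincaré inequality particular to piecewise affine elements (and for higher $k$ one would instead augment $s_V$ by $(h^{2t}\nabla u_h,\nabla v_h)_\Omega$). Everything else is bookkeeping: match $\|\cdot\|_S$, $E$ and $\Xi_E$ to \eqref{stab_1}/\eqref{stab_2} and quote Theorem~\ref{thm:cont_dep} together with Lemma~\ref{lem:stab_conv}.
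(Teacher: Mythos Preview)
Your proposal is correct and follows essentially the same route as the paper's own proof: verify the abstract hypotheses of Theorem~\ref{thm:cont_dep} via Section~\ref{satisfaction}, supply the missing uniform bound $\|u_h\|_C\le E$ from Proposition~\ref{stab_cond}, and then read off \eqref{localbound} and \eqref{globalbound} by inserting \eqref{stab_1}, \eqref{stab_2}, \eqref{conv_order} and \eqref{apriori}. Your write-up is in fact more explicit than the paper's, which simply cites these ingredients without spelling out the substitution of $\Xi_E$ or the absorption of the $\eta_V(u_h)\lesssim h$ term.
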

\begin{proof}
First observe that it was shown in section \ref{sec:hypo} that the proposed
formulation satisfies the assumptions of Theorem
\ref{thm:cont_dep}. It then only remains to show that the stability condition is
uniformly satisfied, but this was shown in Proposition
\ref{stab_cond}. The estimates \eqref{localbound} and
\eqref{globalbound} are then a consequence of \eqref{stab_1},
\eqref{stab_2}, \eqref{conv_order} and \eqref{apriori}. Observe that
by \eqref{apriori}
the smallness condition on $\eta(u_h,z_h)$ will be satisfied for $h$
small enough.
\qed
\end{proof}
\section{The effect of perturbations in data}
We have shown that the proposed stabilised methods can be considered
to have a certain optimality with respect to the conditional
dependence of the ill-posed problem.  In practice however it is
important to consider the case of perturbed data. Then it is now
longer realistic to assume that an exact solution exists. The above
error analysis therefore no longer makes sense. Instead we must include the
size of the perturbations, leading to error estimates that measure the
relative importance of the discretization error and the error in
data. To keep the discussion concise we will present the theory for
the Cauchy problem and give full detail only in the case of
CIP-stabilisation (the extension to GaLS is straightforward by
introducing the perturbations also in the stabilisation $s_V^S$ under
additional regularity assumptions.)
In the CIP case the perturbations can be included in \eqref{compactFEM} by
assuming that 
\begin{equation}\label{pert_rhs}
l_h(w) := (f + \delta f,w)_{\Omega} + \left<\psi+\delta \psi,w \right>_{\Gamma_N}
\end{equation}
where $\delta f$ and $\delta \psi$ denote measurement errors and the
unperturbed case still allows for a unique solution. We obtain for \eqref{rhs-conf},
\begin{equation}\label{pert_size}
|l_h(w_h) - l(w_h)|:= |(\delta f,w_h) + \left<\delta \psi, w_h
\right>_{\Gamma_N}| \lesssim \|\delta l\|_{(H^1(\Omega))'}\|w_h\|_{H^1(\Omega)}.
\end{equation} 
Similarly
the penalty operator $s_V(u,v_h)$ will be perturbed by a $\delta
s(v_h) := \left<h\, \delta \psi, \partial_n v_h
\right>_{\Gamma_N}$, here depending only on $\delta \psi$, but which may
depend also on measurement errors in the Dirichlet data.
We may then write
\begin{equation}\label{pert_L}
L_h(w_h,v_h):= l_h(w_h)+s_V^D(u,v_h) + \delta s^D(v_h).
\end{equation}

Observe that the perturbations must be assumed smooth enough so that
the above terms make sense, i.e. in the case of the Cauchy problem,
$\delta f
\in (H^1(\Omega))'$ and $\delta \psi \in L^2(\Gamma_N)$. It follows
that $\delta
s^D(v_h) \leq h^{\frac12} \|\delta \psi\|_{\Gamma_N} |v_h|_{s_V}$. 

A natural question to ask is
how the approximate solutions of \eqref{compactFEM} behaves in the
asymptotic limit, in the case where no exact solution exists. In this
case we show that a certain norm of the solution must blow up under mesh refinement.
\begin{proposition}
Assume that $l_h \in (H^1(\Omega))'$, but no $u\in V_0$ satisfies the equation 
\begin{equation}\label{illposed}
a(u,w)=l_h(w), \quad \forall w \in W.
\end{equation} Let $(u_h,z_h)$ be the solution of
\eqref{compactFEM} with the stabilisation chosen to be the CIP-method (section
\ref{CIP}). Then if $s^S_V \equiv s^S_W$,
$$\|h^{-\frac12} u_h\|_{\Gamma_D} + \|\nabla
u_h\|_{\Omega} + |z_h|_{s_W} \rightarrow \infty,\mbox{ when } h
\rightarrow 0.
$$
If $s^S_W(\cdot,\cdot)$ is defined by \eqref{eq:nonadjointcons_pen} then
$$ \| \nabla u_h\|_{\Omega} \rightarrow \infty,\mbox{ when } h
\rightarrow 0.
$$
\end{proposition}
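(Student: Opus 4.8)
The plan is a compactness argument by contradiction: if the indicated quantity remained bounded along some sequence $h_n\to 0$, one would extract a weak $H^1$-limit of $u_{h_n}$ solving \eqref{illposed}, contradicting the hypothesis.

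\emph{Case $s^S_V\equiv s^S_W$.} Suppose $N_h:=\|h^{-\frac12}u_h\|_{\Gamma_D}+\|\nabla u_h\|_\Omega+|z_h|_{s_W}$ satisfies $N_{h_n}\le C$ along a sequence $h_n\to 0$. Since $\Gamma_D$ has positive $(d-1)$-measure, a Friedrichs inequality gives $\|u_{h_n}\|_{H^1(\Omega)}\lesssim \|\nabla u_{h_n}\|_\Omega+\|u_{h_n}\|_{\Gamma_D}\lesssim \|\nabla u_{h_n}\|_\Omega+h_n^{\frac12}\,\|h_n^{-\frac12}u_{h_n}\|_{\Gamma_D}\le C$, so up to a subsequence $u_{h_n}\rightharpoonup u^\star$ weakly in $H^1(\Omega)$; by compactness of the trace operator the boundary traces converge strongly in $L^2(\partial\Omega)$, and since $\|u_{h_n}\|_{\Gamma_D}\to 0$ we get $u^\star\in V_0$. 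I would then test the first equation of \eqref{FEM} with $w_h=i_Ww$, $w$ ranging over a dense set of smooth functions in $W$, to get $a_h(u_{h_n},i_Ww)-s_W(z_{h_n},i_Ww)=l_h(i_Ww)$. Writing $a_h(u_{h_n},i_Ww)=a(u_{h_n},i_Ww)-\langle\partial_n i_Ww,u_{h_n}\rangle_{\Gamma_D}$ (the $\Gamma_N'$ boundary term vanishes because $i_Ww\in W$), the nonconformity term is $\lesssim \|\partial_n i_Ww\|_{\Gamma_D}\|u_{h_n}\|_{\Gamma_D}\lesssim \|w\|_{H^2}\,h_n^{\frac12}N_{h_n}\to 0$ by standard trace/interpolation bounds on $i_Ww$ for smooth $w$; $a(u_{h_n},i_Ww)=a(u_{h_n},w)+a(u_{h_n},i_Ww-w)\to a(u^\star,w)$ by weak convergence and $\|i_Ww-w\|_{H^1}\to 0$; $l_h(i_Ww)\to l_h(w)$ since $l_h\in(H^1(\Omega))'$; and $|s_W(z_{h_n},i_Ww)|\le |z_{h_n}|_{s_W}\,|i_Ww|_{s_W}\le C\,|i_Ww|_{s_W}\to 0$, since a super-approximation computation shows every contribution to $|i_Ww|_{s_W}$ vanishes for smooth $w$ (the jump terms are $O(h)$, $i_Ww$ vanishes on $\Gamma_N'$, and $\partial_n i_Ww$ is $O(1)$ against the weight $h^{\frac12}$ on $\Gamma_D'$). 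Passing to the limit yields $a(u^\star,w)=l_h(w)$ first for $w$ in the dense set, then for all $w\in W$ by continuity; with $u^\star\in V_0$ this contradicts the non-solvability of \eqref{illposed}.

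\emph{Case $s^S_W$ as in \eqref{eq:nonadjointcons_pen}.} Now assume only $\|\nabla u_{h_n}\|_\Omega\le C$. Testing the two equations of \eqref{FEM} with $(u_h,z_h)$ and subtracting gives the energy identity $|u_h|_{s_V}^2+|z_h|_{s_W}^2=s_V(u,u_h)-l_h(z_h)$; since for $k=1$ the CIP-seminorm satisfies $|u_h|_{s_V^S}\lesssim\|\nabla u_h\|_\Omega$, the data term $s_V(u,u_h)$ is $\lesssim h^{\frac12}\|\psi\|_{\Gamma_N}\|\nabla u_h\|_\Omega\le C$, and because $s^S_W=\gamma_S\|\nabla\cdot\|_\Omega^2$ a Friedrichs inequality gives $\|z_h\|_{H^1(\Omega)}\lesssim |z_h|_{s_W}$, the right-hand side is $\le C+C|z_h|_{s_W}$, so Young's inequality forces $|z_h|_{s_W}$, $\|h^{-\frac12}u_h\|_{\Gamma_D}$, and hence $\|u_h\|_{H^1(\Omega)}$, $\|z_h\|_{H^1(\Omega)}$, to be bounded along $h_n$. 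As above we extract $u_{h_n}\rightharpoonup u^\star\in V_0$ and additionally $z_{h_n}\rightharpoonup z^\star\in W$ (the latter using $\|z_{h_n}\|_{\Gamma_N'}\to 0$). The new feature is that the stabilisation term produces $\gamma_S(\nabla z_{h_n},\nabla i_Ww)_\Omega\to \gamma_S(\nabla z^\star,\nabla w)_\Omega$, which does not vanish through $|i_Ww|_{s_W}$. To control it I would characterise $z^\star$: passing to the limit in the second equation of \eqref{FEM} tested with $v_h=i_Vv$ for smooth $v\in V_0$ (all stabilisation and data terms vanish by the same super-approximation/trace estimates) gives $a(v,z^\star)-\langle\partial_nv,z^\star\rangle_{\Gamma_N'}=0$ for all such $v$, i.e. $z^\star$ weakly solves $\mathcal L^\star z^\star=0$ in $\Omega$, $z^\star=0$ on $\Gamma_N'$, $\partial_n z^\star=0$ on $\Gamma_D'$; since $\Gamma_B'=\Gamma_D'\cap\Gamma_N'$ has positive measure this is an elliptic Cauchy problem (with the roles of $\Gamma_D,\Gamma_N$ played by $\Gamma_N',\Gamma_D'$) with vanishing Cauchy data, so by the uniqueness underlying \eqref{stab_2} and \cite{ARRV09}, $z^\star\equiv 0$. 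Hence $\gamma_S(\nabla z_{h_n},\nabla i_Ww)_\Omega\to 0$, the remaining terms are treated exactly as before, and we again obtain $a(u^\star,w)=l_h(w)$ for all $w\in W$ with $u^\star\in V_0$, a contradiction; therefore $\|\nabla u_h\|_\Omega\to\infty$.

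The routine but slightly delicate ingredients are the super-approximation and discrete trace/inverse estimates showing the nonconformity and stabilisation terms vanish when tested against interpolants of smooth functions, together with the density in $W$ of smooth functions vanishing near $\Gamma_N'$, which is where the geometry of $\partial\Gamma_N'$ enters. The genuine obstacle, however, is the second case: the term $\gamma_S(\nabla z_{h_n},\nabla i_Ww)_\Omega$ cannot be neutralised by norm equivalence alone, so one must identify the weak limit $z^\star$ by passing to the limit in the discrete adjoint equation and then invoke unique continuation for the homogeneous adjoint Cauchy problem.
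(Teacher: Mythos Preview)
Your argument is correct and follows essentially the same route as the paper: contradiction via weak $H^1$-compactness, with the consistency, nonconformity, and stabilisation terms shown to vanish when tested against interpolants of smooth $w\in W$ (respectively $v\in V_0$), and in the second case an energy identity $|u_h|_{s_V}^2+|z_h|_{s_W}^2=s_V(u,u_h)-l_h(z_h)$ to recover a priori bounds from $\|\nabla u_h\|_\Omega$ alone, followed by identification of the weak limit of $z_h$ as zero through the adjoint equation and the conditional stability \eqref{stab_2}. The only organisational differences are that you bound $|i_Ww|_{s_W}$ directly (the paper instead subtracts and adds $\phi$ and uses $s_W(z_h,\phi)=0$ for the CIP jump terms), and you pass through $a(u_{h_n},i_Ww)$ rather than $a(u_h,\phi)$, which spares you the estimate of $a_h(u_h,\phi-i_W\phi)$; both routes are equivalent.
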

\begin{proof}
Assume that there exists $M \in \mathbb{R}$ such that
\[
\|h^{-\frac12} u_h\|_{\Gamma_D} + \|\nabla
u_h\|_{\Omega} + |z_h|_{s_W} < M
\]
for all $h>0$. It then follows by weak compactness that we may extract
a subsequence $\{u_{h}\}$ for which $u_{h}
\rightharpoonup \upsilon \in V$ as $h \rightarrow 0$. We will now show that this function
must be a solution of \eqref{illposed}, leading to a contradiction. Let
$\phi \in C^\infty \cap W$ and consider
\[
a(\upsilon, \phi) = \lim_{h \rightarrow 0} a(u_h,\phi).
\]
For the right hand side we observe that
\begin{multline*}
a(u_h,\phi) = a_h(u_h,\phi - i_W \phi) +
a(u_h,\phi)- a_h(u_h,\phi)\\
+ s_W(z_h,i_W \phi)+ l_h(i_W \phi - \phi) + l_h(\phi).
\end{multline*}
Now we bound the right hand side term by term. First using an argument
similar to that of
\eqref{contcont}, followed by approximation and trace and inverse inequalities, we have
\begin{equation}\label{b1}
 a_h(u_h,\phi - i_W \phi) \lesssim \|\phi - i_W \phi\|_{*,W}
 |(u_h,0)|_{\mathcal{L}} \leq C h \|\phi\|_{H^2(\Omega)} (\|h^{-\frac12} u_h\|_{\Gamma_D} + \|\nabla
u_h\|_{\Omega}).
\end{equation}
Then using an argument similar to \eqref{conf_bound} recalling that
$\phi$ is a smooth function we get the bound
\begin{equation}\label{b2}
a(u_h,\phi)- a_h(u_h,\phi) \leq h^{\frac12} \|\partial_n
\phi\|_{\Gamma_D} \|h^{-\frac12} u_h\|_{\Gamma_D}.
\end{equation}
For the adjoint stabilisation, first assume that it is chosen to be
the CIP stabilisation and add and subtract $\phi$ in the right
slot to get 
\begin{equation}\label{b3}
 s_W(z_h,i_W \phi) \leq \underbrace{s_W(z_h,\phi)}_{=0} +
 s_W(z_h, i_W \phi - \phi) \leq  C |z_h|_{s_W} h \|\phi\|_{H^2(\Omega)}.
\end{equation}
If the form \eqref{eq:nonadjointcons_pen} is used, we first observe
that testing \eqref{compactFEM} with $v_h=u_h, w_h=-z_h$ yields
\begin{multline*}
|u_h|_{s_V}^2+|z_h|_{s_W}^2 = A_h[(u_h,z_h),(u_h,-z_h)] =
l_h(z_h)+\left<h \psi, \partial_n u_h \right>_{\Gamma_N} 
\\
\leq
\|l_h\|_{(H^1(\Omega))'} |z_h|_{s_W} + h^{\frac12} \|\psi\|_{\Gamma_N} |u_h|_{s_V}.
\end{multline*}
It follows that there exists $M>0$ such that
\[
\|h^{-\frac12} u_h\|_{\Gamma_D} + \|h^{\frac12} \partial_n
u_h\|_{\Gamma_N} + \|h^{-\frac12} z_h\|_{\Gamma_N'} +
\|z_h\|_{H^1(\Omega)} \leq M, \quad \forall h>0.
\]
Assuming also that $\|\nabla u_h\|_{\Omega} \leq M$, we may then extract a subsequence $u_{h}
\rightharpoonup \upsilon \in V$ as $h \rightarrow 0$ and $z_h\rightharpoonup \zeta \in W$ as
$h\rightarrow 0$.
Using similar arguments as above we may show that $\exists C>0$ such
that for all $\varphi \in
V \cap C^\infty$ there holds
\[
a(\varphi,z_h) \leq  C h^{\frac12}
\]
implying that $\zeta = 0$, by \eqref{contdep} and \eqref{stab_2}. Therefore $s_W(z_h,i_W \phi)
\rightarrow 0$, for all $\phi \in W$.
Observing finally that $ l_h(i_W \phi - \phi) \lesssim \|l_h\|_{W'} h
\|\phi\|_{H^2(\Omega)}$ we may collect the bounds \eqref{b1} - \eqref{b3}
to conclude that by density
\[
a(\upsilon,\phi) = \lim_{h \rightarrow 0} a(u_h,\phi) = l_h(\phi),
\quad \forall \phi \in W
\]
and hence that $\upsilon$ is a weak solution to \eqref{illposed}. This
contradicts the assumption that the problem has no solution and we
have proved the claim.
\qed
\end{proof}
To derive error bounds for the perturbed problem we assume that the
$W$-norm can be bounded by the $\mathcal{L}$-norm, in the following
fashion, $\forall w_h \in W_h$
\begin{equation}\label{Lstrength}
\frac{\|w_h\|_W}{|(0,w_h)|_{\mathcal{L}}} \lesssim h^{-\kappa}
\end{equation}
for some $\kappa \ge 0$.
We may then prove the following perturbed versions of Lemma
\ref{lem:stab_conv} and Theorem \ref{thm:cont_dep}. 
\begin{lemma}\label{lem:pert_stab_conv}
Assume that the hypothesis of Lemma \ref{lem:stab_conv} are satisfied, with $L_h(\cdot)$ defined by
\eqref{pert_L} and \eqref{pert_rhs}. Also assume that
\eqref{Lstrength} holds for some $\kappa\ge 0$. Then
$$
|(u - u_h,z_h)|_{\mathcal{L}} \lesssim C_V(u)(1+ c_s^{-1}) h^{t} +
c_s^{-1} h^{\frac12}  \|\delta \psi\|_{\Gamma_N}+  c_s^{-1} h^{-\kappa} \|\delta l\|_{(H^1(\Omega))'}.
$$
\end{lemma}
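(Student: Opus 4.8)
The plan is to mimic the proof of Lemma \ref{lem:stab_conv}, tracking the extra terms that the data perturbation introduces. As before, set $\xi_h := u_h - i_V u$, use the triangle inequality and the approximability \eqref{approxint} to reduce to estimating $|(\xi_h,z_h)|_{\mathcal{L}}$, and apply the discrete stability \eqref{disc_stab}. The numerator $A_h[(\xi_h,z_h),(v_h,w_h)]$ is rewritten via the Galerkin orthogonality \eqref{galortho}, but now with $L_h$ given by \eqref{pert_L}--\eqref{pert_rhs} rather than the unperturbed right-hand side. This produces the same three-term expression as in Lemma \ref{lem:stab_conv} plus two perturbation contributions: the data-residual term $l_h(w_h)-l(w_h)$ and the perturbation $\delta s^D(v_h)$ of the penalty operator acting on the exact solution.

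First I would handle the two ``clean'' terms exactly as in Lemma \ref{lem:stab_conv}: the continuity bound \eqref{cont1} applied to $a_h(u-i_V u,w_h)$ and the Cauchy--Schwarz inequality applied to $s_V(u-i_V u,v_h)$ together give $\lesssim (\|u-i_V u\|_{*,V} + \gamma_{\max}|(u-i_V u,0)|_{\mathcal{L}})\,|(v_h,w_h)|_{\mathcal{L}}$, which \eqref{approxint} bounds by $C_V(u) h^t |(v_h,w_h)|_{\mathcal{L}}$. Next, for the perturbation of the load, use \eqref{pert_size}: $|l_h(w_h)-l(w_h)| \lesssim \|\delta l\|_{(H^1(\Omega))'}\|w_h\|_{H^1(\Omega)}$, and then invoke \eqref{Lstrength} to trade $\|w_h\|_W$ for $h^{-\kappa}|(0,w_h)|_{\mathcal{L}} \le h^{-\kappa}|(v_h,w_h)|_{\mathcal{L}}$. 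Finally, for the penalty perturbation, use the bound recorded just before the statement, $\delta s^D(v_h) \le h^{\frac12}\|\delta\psi\|_{\Gamma_N}|v_h|_{s_V} \lesssim h^{\frac12}\|\delta\psi\|_{\Gamma_N}|(v_h,w_h)|_{\mathcal{L}}$, since $|v_h|_{s_V} \lesssim |(v_h,0)|_{\mathcal{L}}$ by the discrete stability hypothesis.

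Collecting these four contributions in the supremum and dividing by $c_s$ gives
\[
c_s |(\xi_h,z_h)|_{\mathcal{L}} \lesssim C_V(u) h^t + h^{\frac12}\|\delta\psi\|_{\Gamma_N} + h^{-\kappa}\|\delta l\|_{(H^1(\Omega))'},
\]
and adding back $|(u-i_V u,0)|_{\mathcal{L}} \lesssim C_V(u) h^t$ via the triangle inequality yields the stated estimate (the $c_s^{-1}$ in front of the first term coming, as in Lemma \ref{lem:stab_conv}, from combining the interpolation error with the $c_s^{-1}$-scaled stability estimate).

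I do not expect any serious obstacle here: every ingredient is already in place, and the proof is essentially bookkeeping. The one point requiring a little care is making sure the perturbation terms are correctly attributed — $\|\delta l\|_{(H^1(\Omega))'}$ absorbs both $\delta f$ and the part of $\delta\psi$ entering through $l_h$, while the separate $h^{1/2}\|\delta\psi\|_{\Gamma_N}$ term comes specifically from $\delta s^D$ in the modified right-hand side $L_h$ of \eqref{pert_L}; one should check these are not double-counted and that the regularity assumptions $\delta f \in (H^1(\Omega))'$, $\delta\psi \in L^2(\Gamma_N)$ are exactly what make each term finite. The appearance of the negative power $h^{-\kappa}$ is unavoidable and simply reflects that the load perturbation is measured in the dual norm while stability is only available in the weaker $\mathcal{L}$-norm.
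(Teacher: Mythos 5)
Your argument is correct and follows essentially the same route as the paper's proof: the same reduction to $|(\xi_h,z_h)|_{\mathcal{L}}$ via discrete stability and Galerkin orthogonality, with the two perturbation terms $\delta l(w_h)$ and $\delta s^D(v_h)$ isolated and bounded using \eqref{pert_size}, \eqref{Lstrength} and the control of $|v_h|_{s_V}$ by the $\mathcal{L}$-semi-norm. The paper's version is merely terser, stating only the modification relative to Lemma \ref{lem:stab_conv}; your bookkeeping of where each perturbation norm originates matches it exactly.
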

\begin{proof}
We only show how to modify the proof of Lemma \ref{lem:stab_conv} to account for
the perturbed data. Observe that the perturbation appears when we
apply the Galerkin orthogonality:
$$
c_s |(\xi_h,z_h)|_{\mathcal{L}}    \leq \sup_{\{v_h,w_h\} \in V_h \times
W_h} \frac{\delta l(w_h)+a_h(u-i_V u,w_h) + s_V(u-i_V u,v_h) + \delta s^D(v_h)}{|(v_h,w_h)|_{\mathcal{L} }}
$$
here $\delta l(w_h) :=l_h(w_h)-l(w_h)$.
We only need to consider the upper bound of the additional terms
related to the perturbations in the following fashion
$$
\frac{l_h(w_h)-l(w_h)- \delta s^D(v_h)}{|(v_h,w_h)|_{\mathcal{L}}} \lesssim \|\delta
l\|_{(H^1(\Omega))'}
\frac{\|w_h\|_{H^1(\Omega)}}{|(v_h,w_h)|_{\mathcal{L} }}+
h^{\frac12}  \|\delta \psi\|_{\Gamma_N}
\frac{|v_h|_{s_V}}{|(v_h,w_h)|_{\mathcal{L}}}.
$$
The conclusion then follows as in Lemma \ref{lem:stab_conv} and by applying the
assumption \eqref{Lstrength} and the fact that the $\mathcal{L}$
semi-norm controls $|v_h|_{s_V}$.
\qed
\end{proof}
\begin{remark}
In two instances we can give the precise value of the power $\kappa$. First
assume that the adjoint stabilisation is given by equation
\eqref{eq:nonadjointcons_pen} with  $|(0,w_h)|_{\mathcal{L}}$ defined
by \eqref{Lnorm} with the added $\|w_h\|_{H^1(\Omega)}$ term. It then follows that \eqref{Lstrength} holds with
$\kappa=0$. On the other hand if GaLS
stabilisation or CIP stabilisation are used also for the adjoint
variable and piecewise affine spaces are used for the approximation we
know that by a discrete Poincar\'e inequality \cite{EGH02}
\[
\|w_h\|_{H^1(\Omega)} \lesssim h^{-1} |(0,w_h)|_{\mathcal{L}}
\]
and therefore $\kappa=1$ in this case.
\end{remark}

Similarly the perturbations will enter the conditional stability
estimate and limit the accuracy that can be obtained in the
$\|\cdot\|_S$ norm when the result of Theorem \ref{thm:cont_dep} is applied.
\begin{theorem}\label{thm:pert_conv}
Let $u$ be the solution of
\eqref{abstract_prob} and $(u_h,z_h)$ the solution of the formulation
\eqref{compactFEM} with the right hand side given by \eqref{pert_L}. Assume
that the assumptions \eqref{semi_S}-\eqref{approxint} hold, that the problem \eqref{abstract_prob} has the
stability property \eqref{contdep} and that $u$ satisfies
the condition for stability. 
Let 
\[
\eta_\delta(u_h,z_h):= \eta(u_h,z_h)+\|\delta l\|_{W'}
\]
with $\eta(u_h,z_h)$ defined by \eqref{eta}.
Then for 
$
\eta_\delta(u_h,z_h)
$
small enough,  there holds
\begin{equation}\label{aposteriori_pert}
\|u - u_h\|_S\lesssim \Xi_E(\eta_\delta(u_h,z_h)) + \eta_V(u_h)
\end{equation}
with $\Xi_E$ dependent on $u_h$. 
For sufficiently smooth $u$ there holds
\begin{equation}\label{apriori_pert}
\eta_\delta(u_h,z_h)
 \lesssim C_V(u)(1+ c_s^{-1}) h^{t} +
c_s^{-1} h^{\frac12}  \|\delta \psi\|_{\Gamma_N} + (1+ c_s^{-1} h^{-\kappa}) \|\delta l\|_{(H^1(\Omega))'}.
\end{equation}
\end{theorem}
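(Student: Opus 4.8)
The plan is to mirror the proof of Theorem~\ref{thm:cont_dep} essentially line by line, tracking only the two places at which the perturbed data enter. As there, I would split the error $e = u - u_h = \tilde e + e_h$ with $\tilde e := u - r_V u_h \in V_0$ and $e_h := r_V u_h - u_h \in V_h$, so that by \eqref{conf_error_rV} it suffices to bound $\|\tilde e\|_S$, since $\|u - u_h\|_S \le \|\tilde e\|_S + \eta_V(u_h)$. Exactly as in \eqref{error_rep}--\eqref{pert_right}, $\tilde e$ solves $a(\tilde e, w) = (r,w)_{\langle W',W\rangle}$ with $(r,w)_{\langle W',W\rangle} := l(w) - a(u_h,w) - a(e_h,w)$, where $l$ denotes the \emph{unperturbed} functional for which $u$ is a solution.

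Next I would reuse the Strang-type identity \eqref{pert_strang}, writing $(r,w)_{\langle W',W\rangle} = T_1 + T_2 + T_3 - T_4 - T_5$. The crucial observation is that this identity is obtained using only the first (primal) equation of \eqref{FEM}, namely $a_h(u_h,i_W w) - s_W(z_h,i_W w) = l_h(i_W w)$, so the perturbation $\delta s^D$ of the second equation never appears in it; consequently the bounds on $T_1,T_3,T_4,T_5$ by $\eta(u_h,z_h)\|w\|_W$ carry over verbatim from the proof of Theorem~\ref{thm:cont_dep}. The only new term is $T_2 = l(i_W w) - l_h(i_W w) = -\delta l(i_W w)$, which by \eqref{pert_size} together with the $W$-stability of $i_W$ in \eqref{Wstab} is bounded by $\|\delta l\|_{W'}\|w\|_W$. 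Summing gives $|(r,w)_{\langle W',W\rangle}| \lesssim (\eta(u_h,z_h) + \|\delta l\|_{W'})\|w\|_W = \eta_\delta(u_h,z_h)\|w\|_W$, hence $\|r\|_{W'} \lesssim \eta_\delta(u_h,z_h)$.

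Then I would check the hypothesis of the conditional stability \eqref{contdep} for $\tilde e$: by the triangle inequality and \eqref{conf_error_rV}, $\|\tilde e\|_C \le \|u\|_C + \|u_h\|_C + \|e_h\|_C \lesssim \|u\|_C + \|u_h\|_C + \eta_V(u_h)$, and since $u$ is assumed to satisfy the stability condition while $u_h$ is a fixed computed function, the right-hand side is a finite constant $E$. Here lies the only genuine difference from Theorem~\ref{thm:cont_dep}: this $E$ now depends on $u_h$, because we can no longer invoke a uniform-in-$h$ bound on $\|u_h\|_C$ — the preceding Proposition shows $\|u_h\|_C$ may diverge when the perturbed data admit no exact solution. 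Provided $\eta_\delta(u_h,z_h)$ is small enough that the smallness requirement on $\|r\|_{W'}$ in \eqref{contdep} holds, \eqref{contdep} yields $\|\tilde e\|_S \lesssim \Xi_E(c_a\,\eta_\delta(u_h,z_h))$, and \eqref{aposteriori_pert} follows from $\|u-u_h\|_S \le \|\tilde e\|_S + \eta_V(u_h)$, with $\Xi_E$ dependent on $u_h$ through $E$.

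Finally, for \eqref{apriori_pert} I would simply invoke Lemma~\ref{lem:pert_stab_conv}: under \eqref{Lstrength} it gives $|(u-u_h,z_h)|_{\mathcal{L}} \lesssim C_V(u)(1+c_s^{-1})h^{t} + c_s^{-1}h^{\frac12}\|\delta\psi\|_{\Gamma_N} + c_s^{-1}h^{-\kappa}\|\delta l\|_{W'}$, and since $\eta(u_h,z_h) \le |(u-u_h,z_h)|_{\mathcal{L}}$ while $\eta_\delta = \eta + \|\delta l\|_{W'}$, the two $\|\delta l\|_{W'}$ contributions combine into the stated factor $(1+c_s^{-1}h^{-\kappa})$. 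The main obstacle is the conceptual one highlighted above: one must accept that the constant $E$ entering $\Xi_E$ is certified only a posteriori from the computed pair $(u_h,z_h)$ — which is precisely why the statement is phrased with "$\Xi_E$ dependent on $u_h$" and with a smallness assumption on $\eta_\delta(u_h,z_h)$ rather than on $h$; everything else is a bookkeeping repetition of the unperturbed argument.
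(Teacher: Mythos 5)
Your proposal is correct and follows essentially the same route as the paper: the paper's proof also reduces to re-examining only the term $T_2$ in the Strang-type identity \eqref{pert_strang}, bounding it by $\|\delta l\|_{W'}\|w\|_W$ via the $W$-stability of $i_W$, concluding $\|r\|_{W'}\lesssim \eta_\delta(u_h,z_h)$, and obtaining the a priori bound from Lemma \ref{lem:pert_stab_conv} together with $\|\delta l\|_{W'}\le\|\delta l\|_{(H^1(\Omega))'}$. Your additional remarks — that the perturbation $\delta s^D$ of the second equation does not enter the Strang identity, and that $\Xi_E$ now depends on $u_h$ because no uniform bound on $\|u_h\|_C$ is available — are accurate and make explicit what the paper leaves implicit.
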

\begin{proof}
The difference due to the perturbed data appears in the Strang type
argument.
We only need to study the term $T_{2}$ of the equation \eqref{pert_strang} under the
assumption \eqref{pert_size}. Using the $H^1$-stability of the
interpolant $i_W$ we immediately get
\[
T_2 = l(i_W  w) - l_h(i_W w) \lesssim \|\delta l\|_{W'} \|i_W w\|_W
\lesssim \|\delta l\|_{W'} \|w\|_W.
\]
It then follows that 
\[
|(r,w)_{\left<W',W\right>}| \leq (c_a \eta(u_h,z_h) +  C_W \|\delta
l\|_{W'})\|w\|_W \leq c_{\delta,a} \eta_\delta(u_h,z_h) \|w\|_W
\]
and assuming that $c_{\delta,a} \eta_\delta(u_h,z_h)  < 1$, the a posteriori bound follows by applying the conditional stability
\eqref{contdep}. For the a priori estimate we apply the result of
Lemma \ref{lem:pert_stab_conv} and $\|\delta l\|_{W'} \leq \|\delta
l\|_{(H^1(\Omega))'}$.
\qed
\end{proof}
Observe that the function $\Xi_E$ in the error estimate depends on $\|u_h\|_C$
and therefore is not robust. A natural question is how small we can
choose $h$ compared to the size of the perturbations before the
computational error stagnates or even grows. This leads to a delicate
balancing problem since the mesh size
must be small so that the residual is small enough, but not too small,
since this will make the perturbation terms dominate. Therefore the
best we can hope for is a window $0<h_{min}<h<h_{max}$, within which the estimates \eqref{aposteriori_pert} and \eqref{apriori_pert} hold.
We will explore this below for the approximation of the Cauchy problem using piecewise
affine elements.
\begin{corollary}
Assume that the hypothesis of Lemma \ref{lem:pert_stab_conv} and
Theorem \ref{thm:pert_conv} are satisfied. Also assume that there
exists $h_{min}>0$ and $C_\delta(u)>0$ such that
\begin{equation}\label{assump_pert_1}
h_{min}^{-\kappa} \|\delta
l\|_{(H^1(\Omega))'} + h^{\frac12} \|\delta \psi\|_{\Gamma_N}
\leq C_\delta(u) h\quad \mbox{ for } h > h_{min}
\end{equation}
and $h_{max}>0$ so such that for $h_{min}<h < h_{max}$
there holds $\eta_\delta(u_h,z_h)<c_{\delta,a}^{-1}$.
Then for $h_{min}<h <h_{max}$ there exists $\Xi_E(\cdot)$, independent of $u_h$ such that
\eqref{aposteriori_pert} and \eqref{apriori_pert} hold. 
\end{corollary}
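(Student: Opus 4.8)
The plan is to revisit the proof of Theorem~\ref{thm:pert_conv} and to observe that the only place where $\Xi_E$ acquires a dependence on $u_h$ is through the constant $E$ that enters the conditional stability \eqref{contdep} when the latter is applied to $\tilde e = u - r_V u_h$: this constant must dominate $\|\tilde e\|_C$, hence in particular $\|u_h\|_C$. The new ingredient is that, within the admissible window $h_{min}<h<h_{max}$ and under the balancing hypothesis \eqref{assump_pert_1}, $\|u_h\|_C$ can be bounded by a quantity depending only on $\|u\|_{H^2(\Omega)}$, $C_V(u)$, $C_\delta(u)$ and $c_s$, and not on $h$ or $u_h$.

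First I would combine Lemma~\ref{lem:pert_stab_conv} with \eqref{assump_pert_1}. For $h_{min}<h<h_{max}$ and $\kappa\ge 0$ we have $h^{-\kappa}\|\delta l\|_{(H^1(\Omega))'}\le h_{min}^{-\kappa}\|\delta l\|_{(H^1(\Omega))'}\le C_\delta(u)\,h$ and $h^{\frac12}\|\delta\psi\|_{\Gamma_N}\le C_\delta(u)\,h$, so that, using $t\ge 1$,
\[
|(u-u_h,z_h)|_{\mathcal{L}} \lesssim (C_V(u)+C_\delta(u))(1+c_s^{-1})\,h .
\]
Since $|(\cdot,\cdot)|_{\mathcal{L}}$ is, by \eqref{Lnorm}, the sum of a seminorm in its first argument and one in its second, this bound also controls $\eta_V(u_h)\le|(u-u_h,0)|_{\mathcal{L}}$ and $|z_h|_{s_W}\le|(0,z_h)|_{\mathcal{L}}$; combined with $\|\delta l\|_{W'}\le\|\delta l\|_{(H^1(\Omega))'}$ this is exactly the a priori bound \eqref{apriori_pert}, which therefore continues to hold.

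Next I would reproduce the discrete Poincar\'e argument from the proof of Proposition~\ref{stab_cond}. For piecewise affine $V_h$ one has $\|v_h\|_{H^1(\Omega)}\lesssim h^{-1}|v_h|_{s_V}$ for all $v_h\in V_h$ (a standard Poincar\'e inequality followed by the discrete Poincar\'e inequality of \cite{EGH02}), so, writing $\|u_h\|_{H^1(\Omega)}\le\|i_V u\|_{H^1(\Omega)}+\|i_V u-u_h\|_{H^1(\Omega)}$ and estimating $|i_V u-u_h|_{s_V}\lesssim|(i_Vu-u_h,0)|_{\mathcal{L}}\le|(u-i_Vu,0)|_{\mathcal{L}}+|(u-u_h,z_h)|_{\mathcal{L}}$ via \eqref{approxint} and the previous step, one gets
\[
\|u_h\|_C\lesssim\|u_h\|_{H^1(\Omega)}\lesssim\|u\|_{H^2(\Omega)}+(C_V(u)+C_\delta(u))(1+c_s^{-1})=:C^{\ast},
\]
with $C^{\ast}$ independent of $h$ and $u_h$. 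Together with $\|u_h-r_Vu_h\|_C\lesssim\eta_V(u_h)$ (from \eqref{conf_error_rV} and the first step, hence bounded), this gives
\[
\|\tilde e\|_C\le\|u\|_C+\|u_h\|_C+\|u_h-r_Vu_h\|_C\lesssim E+C^{\ast}=:E',
\]
again with $E'$ independent of $h$ and $u_h$.

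Finally I would re-run the Strang-type estimate of Theorem~\ref{thm:pert_conv} verbatim, the perturbed term now being controlled by $T_2\lesssim\|\delta l\|_{W'}\|w\|_W$; this yields $\|r\|_{W'}\le c_{\delta,a}\,\eta_\delta(u_h,z_h)$, and since $\eta_\delta(u_h,z_h)<c_{\delta,a}^{-1}$ by the choice of $h_{max}$ the smallness requirement of \eqref{contdep} is met. Applying \eqref{contdep} to $\tilde e$ with the $u_h$-independent constant $E'$ gives $\|\tilde e\|_S\lesssim\Xi_{E'}(c_{\delta,a}\,\eta_\delta(u_h,z_h))$, and \eqref{aposteriori_pert} then follows from the triangle inequality and \eqref{conf_error_rV}, with $\Xi_{E'}$ independent of $u_h$. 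The main obstacle is the third step: it hinges on the discrete Poincar\'e inequality $\|v_h\|_{H^1(\Omega)}\lesssim h^{-1}|v_h|_{s_V}$, which is why the argument is restricted to piecewise affine elements with CIP stabilisation as in Proposition~\ref{stab_cond}; one must also verify that the window $h_{min}<h<h_{max}$ is non-empty, which is exactly what \eqref{assump_pert_1} and the stated choice of $h_{max}$ guarantee.
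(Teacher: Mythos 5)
Your proposal is correct and follows essentially the same route as the paper's own proof: invoke Lemma~\ref{lem:pert_stab_conv} together with \eqref{assump_pert_1} to get an $O(h)$ bound on the $\mathcal{L}$-seminorm of the error for $h>h_{min}$, deduce a uniform $H^1$ bound on $u_h$ via the discrete Poincar\'e inequality, conclude that the stability condition holds with a constant independent of $u_h$, and then use the smallness of $\eta_\delta$ for $h<h_{max}$ to apply the conditional stability. The paper states this much more tersely; your write-up simply fills in the intermediate estimates and correctly flags the restriction to piecewise affine elements that the discrete Poincar\'e step requires.
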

\begin{proof}
First observe that by Lemma \ref{lem:pert_stab_conv} and under the assumption \eqref{assump_pert_1} there holds for $h>h_{min}$
$$
|(i_V u - u_h,z_h)|_{\mathcal{L}} \lesssim (C_V(u)+C_\delta(u)) c_s^{-1} h
$$
It follows by this bound and the discrete Poincar\'e inequality, that $\|
u_h\|_{H^1(\Omega)} \lesssim  (C_V(u)+C_\delta(u)) c_s^{-1}$ for
$h>h_{min}$. We may conclude that the condition for stability is satisfied
for $u$, $u_h$ and the discrete error $r_V u_h - u_h$. Therefore,
since the smallness assumptionon $\eta_\delta(u_h,z_h)a$ is satisfied for $h<h_{max}$,
there exists $\Xi_E$ independent of $u_h$ such that estimates \eqref{aposteriori_pert} and \eqref{apriori_pert} hold
when $h_{min}<h <h_{max}$.
\qed
\end{proof}
\section{Numerical examples}
Here we will recall some numerical examples from \cite{Bu13} and
discuss them in the light of the above analysis. We choose $\Omega =
(0,1)\times (0,1)$ and limit the study to
CIP-stabilisation and the case where the primal and adjoint
stabilisations are the same. First we
will consider the case of a well-posed but non-coercive
convection--diffusion equation, $\mathcal{L}:= -\mu \Delta u + \beta
\cdot \nabla u.$ Then we study the elliptic Cauchy
problem with $\mathcal{L} u:=-\Delta u$ for unperturbed and perturbed
data and finally we revisit the
convection-diffusion equation in the framework of the elliptic Cauchy
problem and study the effect of the flow characteristics on the
stability. All computations were carried out on unstructured meshes.
In the convergence plots below the curves have the following
characteristics
\begin{itemize}
\item piecewise affine approximation: square markers;
\item piecewise quadratic approximation: circle markers;
\item full line: the stabilisation semi-norm $|u_h|_{S_V}+ |z_h|_{s_W}$;
\item dashed line: the global $L^2$-norm;
\item dotted line with markers: the local $L^2$-norm.
\item dotted line without markers: reference slopes.
\end{itemize}
\subsection{Convection--diffusion problem with
 pure Neumann boundary conditions}
We consider an example given in \cite{CD11}. The operator is chosen as
\begin{equation}\label{op:conv_diff}
\mathcal{L}(\cdot):= \nabla \cdot( \mu \nabla (\cdot) + \beta \cdot)
\end{equation}
with the physical parameters $\mu=1$,
\[
\beta:= -100 \left(\begin{array}{c} x+y \\ y-x \end{array} \right)
\]
(see the left plot of Figure \ref{velocities_conv}) and the exact solution is given by
\begin{equation}\label{exact_sol}
u(x,y) = 30x(1-x)y(1-y).
\end{equation}
This function satisfies homogeneous Dirichlet boundary conditions and
has $\|u\|_\Omega = 1$. Note that $\|\beta\|_{L^\infty} = 200$ and $\nabla
\cdot \beta = -200$, making the problem strongly noncoercive with a
medium high P\'eclet number. We solve the problem with (non-homogeneous)
Neumann-boundary conditions $(\mu \nabla u + \beta u)\cdot n = g$ on
$\partial \Omega$. The parameters were set to $\gamma_D=10$ and
$\gamma_S=0.01$ for piecewise affine approximation and $\gamma_S=0.001$
for piecewise quadratic approximation. The average value of the approximate solutions has been
imposed using a Lagrange multiplier. The right hand side is then chosen as $\mathcal{L} u$
and for the (non-homogeneous) Neumann conditions, a suitable
right hand side is introduced to make
the boundary penalty term consistent. 
\begin{figure}
\centering
\includegraphics[width=0.41\textwidth]{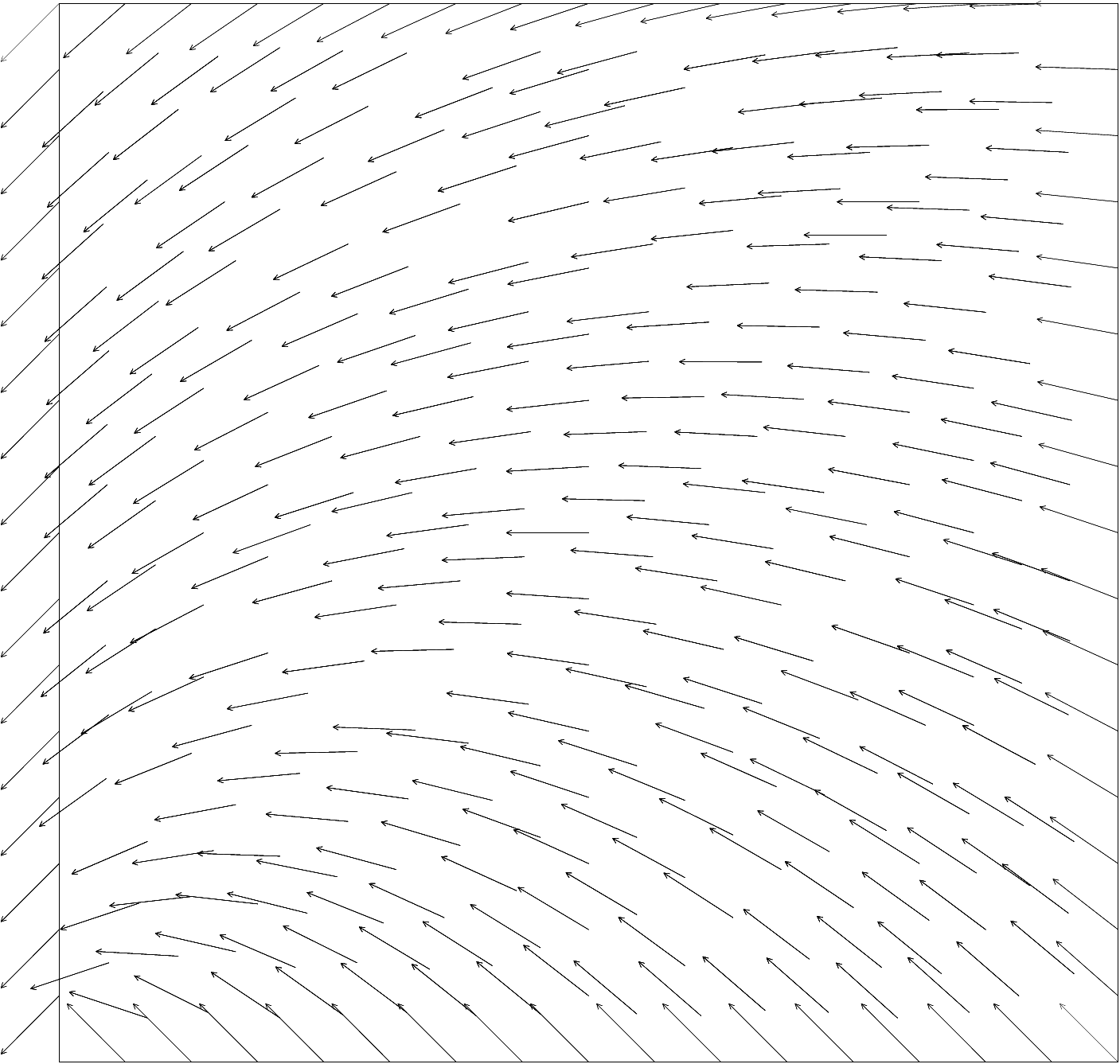}\hspace{0.5cm}
\includegraphics[width=0.45\textwidth, trim = 0cm 2.15cm 0cm 0cm, clip=true]{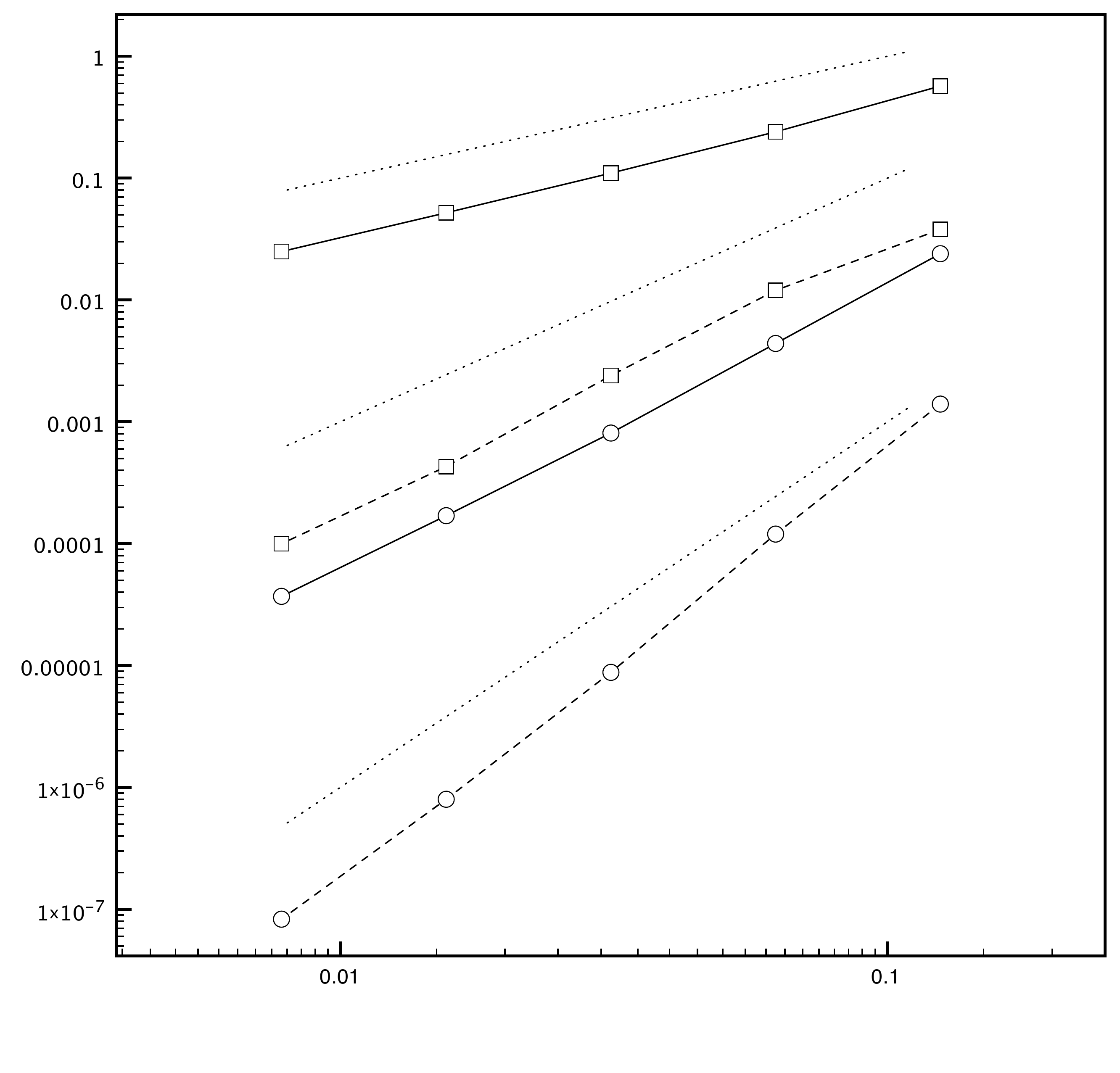} 
\caption{Left: plot of the
  velocity vector field. Right: convergence plot, errors against mesh
  size, filled lines $|u_h|_{s_V}+ |z_h|_{s_W}$, dashed lines
  $L^2$-norm error, dotted lines reference slopes, from top to bottom
  $\mathcal{O}(h)$, $\mathcal{O}(h^2)$, $\mathcal{O}(h^3)$.}\label{velocities_conv}
\end{figure}
In the right plot of Figure \ref{velocities_conv} we observe optimal convergence
rates as predicted by theory (the dual adjoint problem is
well-posed, see \cite{Bu13,CF88}). 
\subsection{The elliptic Cauchy problem}
Here we consider the problem \eqref{eq:Pb} with $\sigma$ the identity
matrix and $c\equiv 0$. We impose the Cauchy data, i.e. both Dirichlet and
Neumann data, on boundaries $x=1,\, 0<y<1$ and $y=1,\, 0<x<1$. We then
solve \eqref{eq:Pb} using the method \eqref{compactFEM} with
\eqref{disc_a2}-\eqref{penalty_a2} and \eqref{CIP_stab} with $k=1$
and $k=2$.

In Figure \ref{penalty_plot}, we present a study of the $L^2$-norm
error under variation of the stabilisation parameter. 
The computations are made on one mesh, with $32$ elements
per side and the Cauchy problem is solved with $k=1,2$ and different
values for $\gamma_{S}$ with $\gamma_{D}=10$
fixed. The  level of $10 \%$ relative error is indicated by the
horizontal dotted line.
Observe that the robustness with
respect to stabilisation parameters is much better for second order polynomial
approximation. Indeed in that case the $10\%$ error level is met for all parameter
values $\gamma_{S} \in [2.0E-5,1]$, whereas in the case of piecewise
affine approximation one has to take $\gamma_{S} \in [0.003,0.05]$.
Similar results for
the boundary penalty parameter not reported here showed that the
method was even more robust under perturbations of $\gamma_{D}$.
In the left plot of Figure \ref{Cauchy_error} we present the contour
plot of the interpolated error $i_V u-u_h$ and in the right, the contour plot of $z_h$. In both cases
the error is concentrated on the boundary where no boundary conditions
are imposed for that particular variable.

In Figure \ref{Cauchy_conv} we present the convergence plots for piecewise
affine and quadratic approximations. The same stabilisation parameters
as in the previous example were used. In both cases we observe the optimal convergence of
the stabilisation terms, $\mathcal{O}(h^k)$, predicted by Lemma \ref{lem:stab_conv}. For
the global
$L^2$-norm of the error we observe experimental convergence of
inverse logarithmic type, as predicted by theory. Note that the main
effect of increasing the polynomial order is a decrease in the error
constant as expected..

For the local $L^2$-norm error,
measured in the subdomain $(0.5,1)^2$, higher convergence
orders, $\mathcal{O}(h^k)$, were obtained in both cases.
\begin{figure}
\centering
\includegraphics[width=8cm]{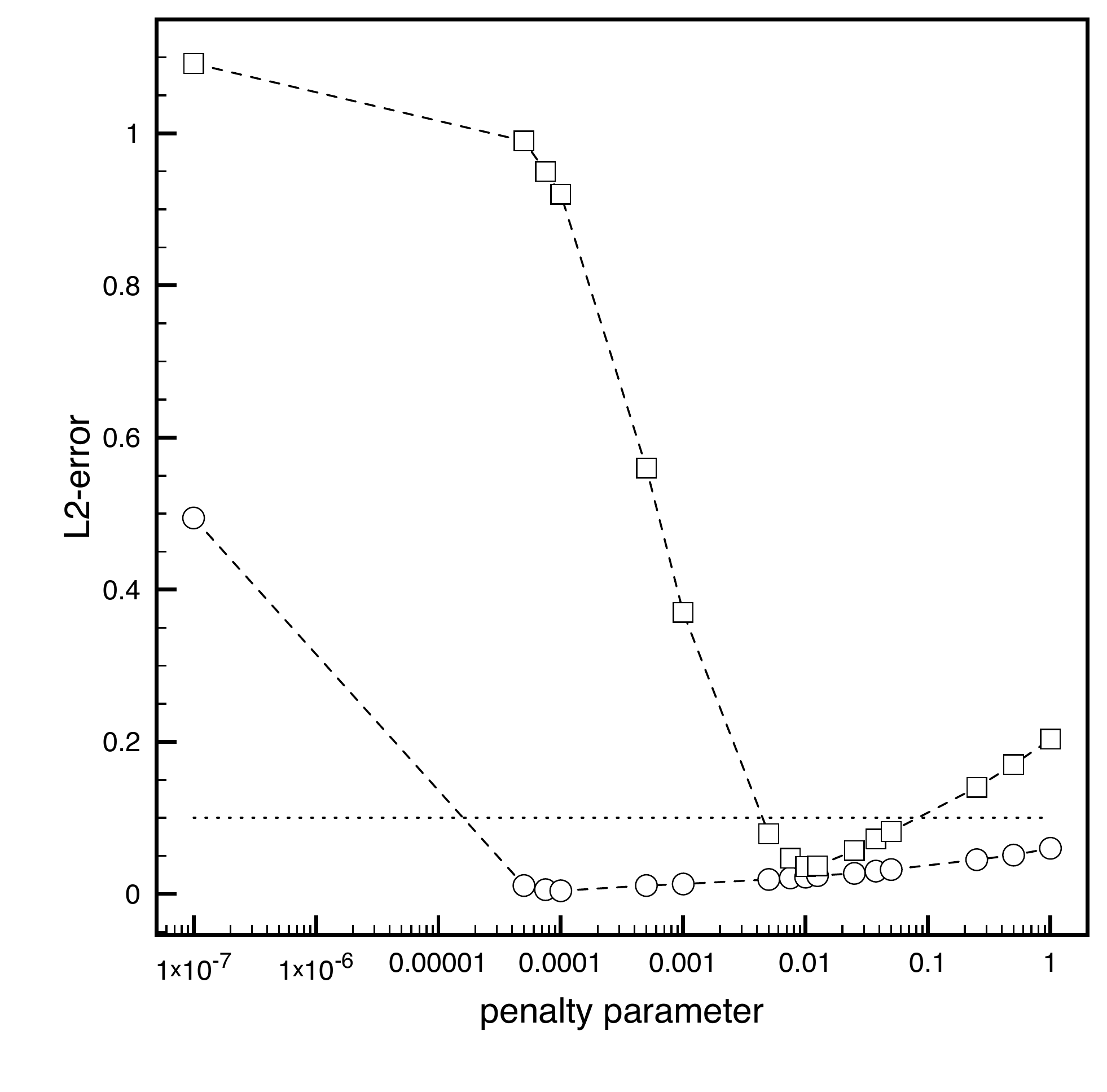}
\caption{Study of the global $L^2$-norm
error under variation of the stabilisation parameter, circles: affine
elements, squares: quadratic elements}\label{penalty_plot}
\end{figure}
\begin{figure}
\centering
\includegraphics[width=0.45\textwidth]{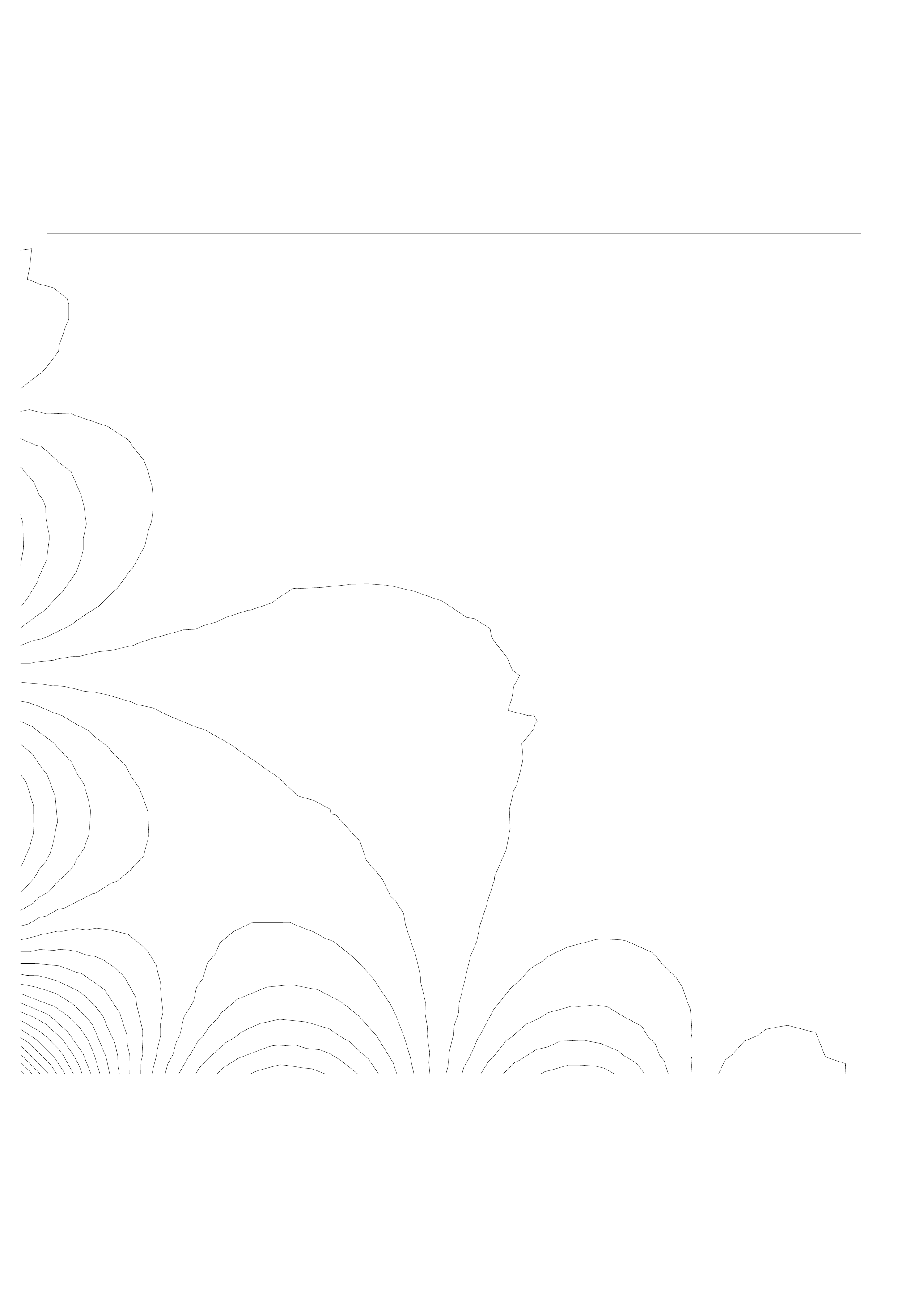}
\includegraphics[width=0.45\textwidth]{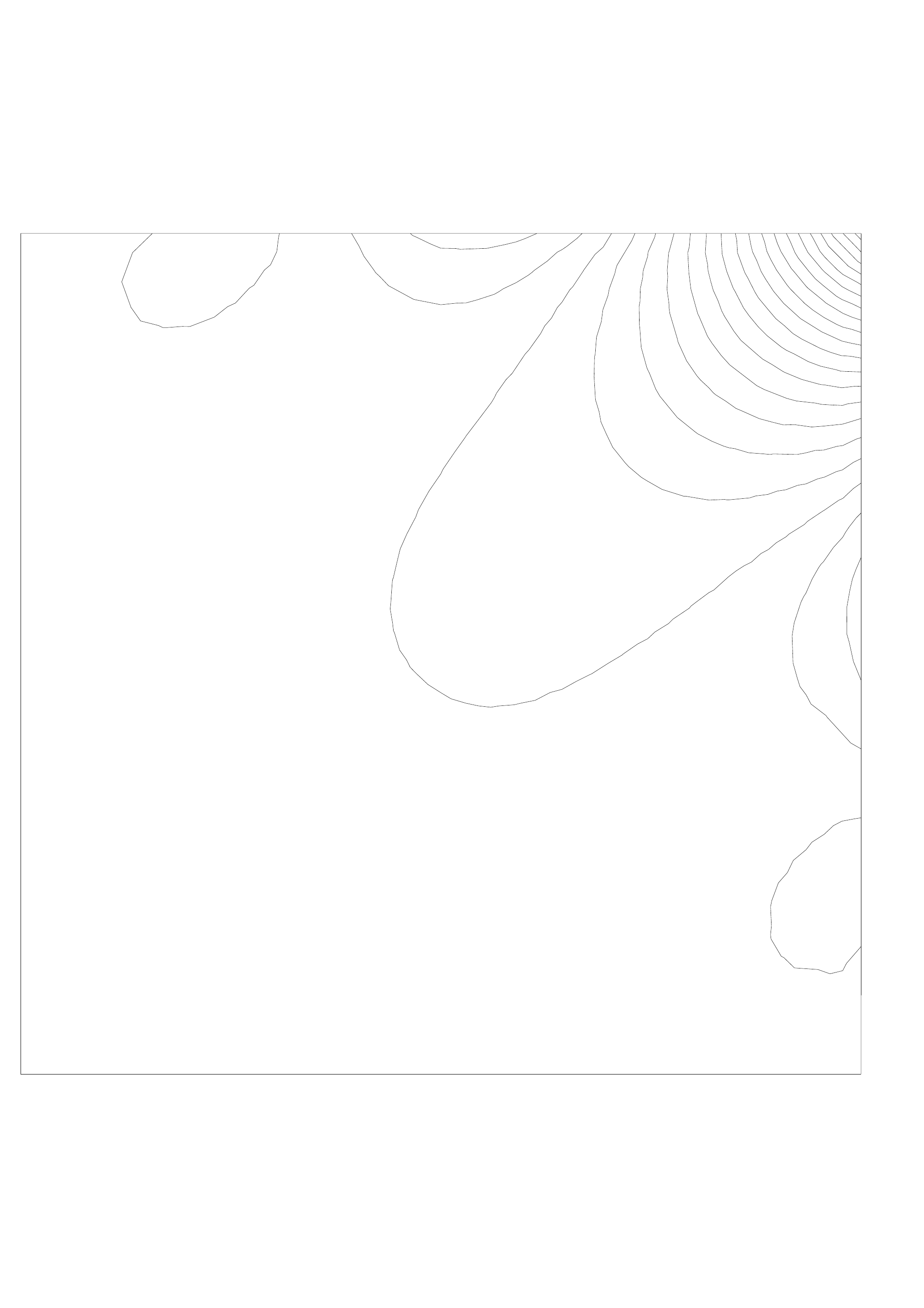}
\caption{Contour plots of the interpolated error $i_h u-u_h$ (left plot) and the
  error in the dual variable $z_h$ (right plot).}\label{Cauchy_error}
\end{figure}
\begin{figure}
\centering
\includegraphics[width=0.47\textwidth]{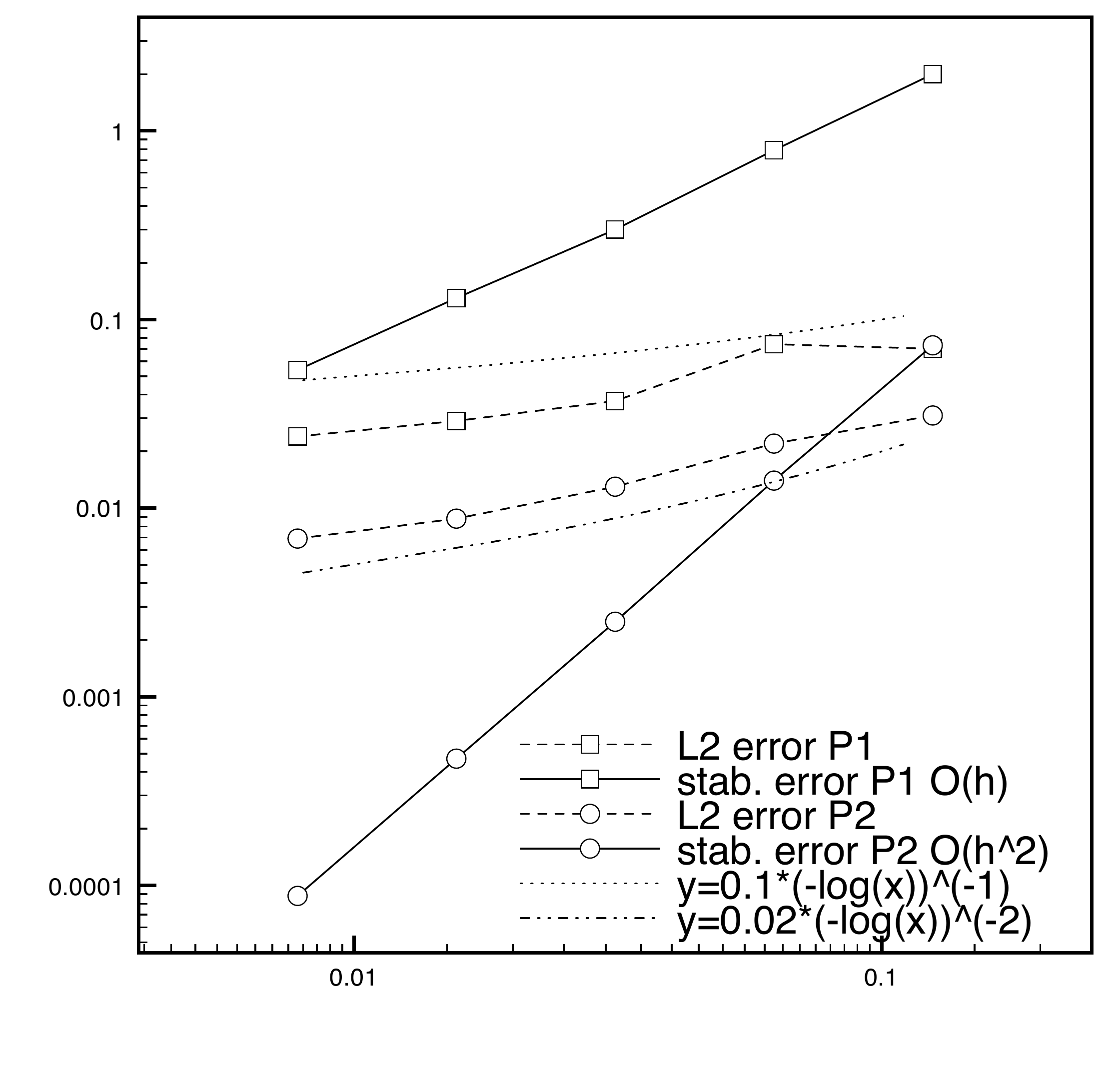}
\includegraphics[width=0.47\textwidth]{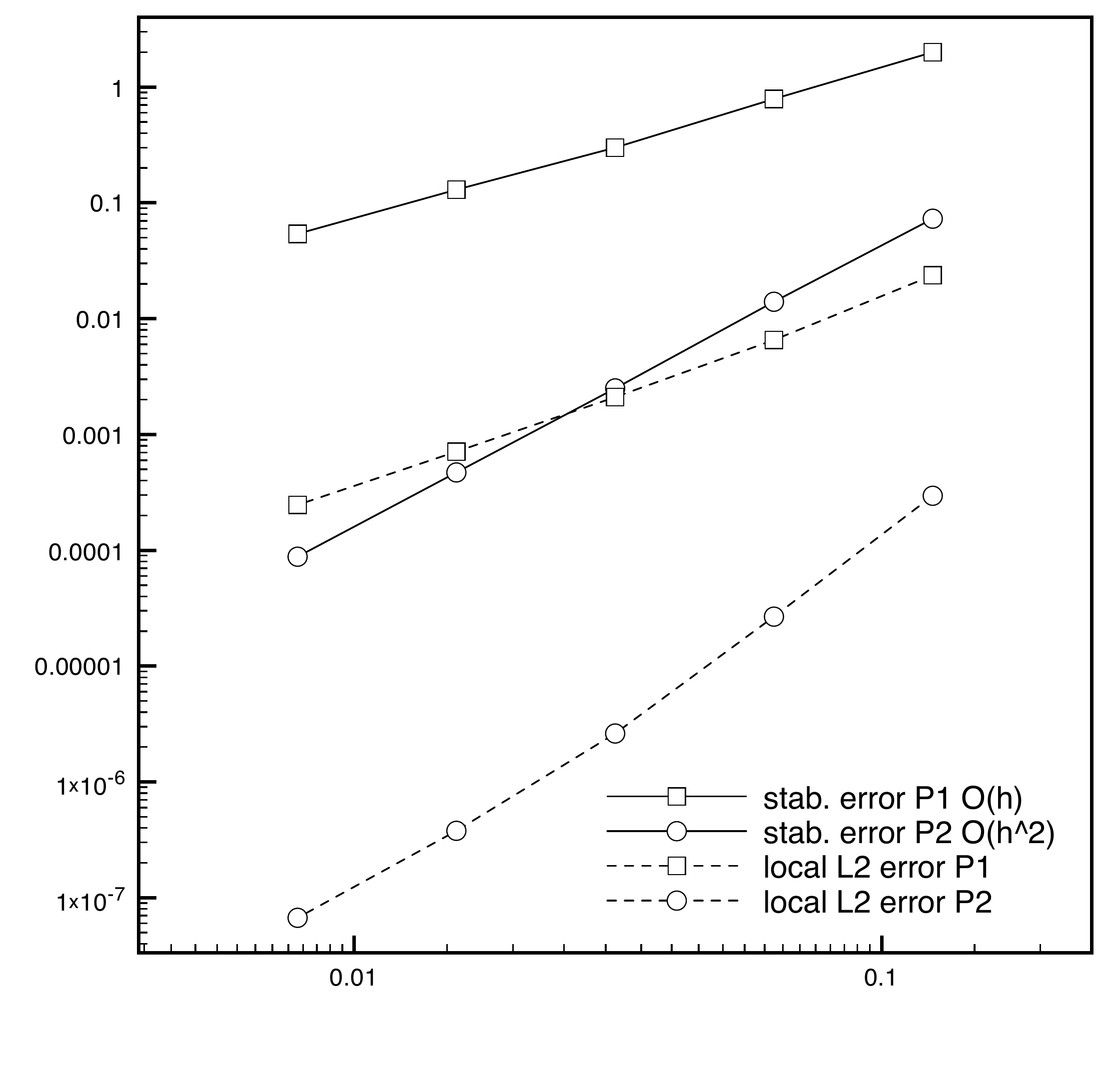}
\caption{Convergence under mesh refinement, the same slopes for the
  stabilization semi-norm are represented in both graphics for reference.}\label{Cauchy_conv}
\end{figure}
\subsubsection{The effect of perturbations in data}
In this section we will consider some numerical experiments with
perturbed data. We consider a
perturbation of the form $\delta \psi =\varsigma v_{rand} \psi$ where
$v_{rand}$ is a random function defined as a fourth order polynomial
on the mesh with random nodal values in $[0,1]$ and $\varsigma>0$ gives
the relative strength of the perturbation. We consider the same
computations as for unperturbed data. 
In all figures we report the stabilisation semi-norm $|z_h|_{s_W} + |u_h|_{s_V}$ to
explore to what extent it can be used as an a posteriori quantity to
tune the stabilisation parameter and to detect loss of convergence due
to perturbed data. 

First we consider the determination of the penalty parameter. First we fix
$\gamma_D=10$. Then, in
Figure \ref{penalty_pert_study} we show the results obtained by varying
$\gamma_S$ when the data is perturbed with $\varsigma=0.01$. We compare the
global $L^2$-error with the stabilisation semi-norm. For the piecewise
affine case we observe that the optimal value of the penalty parameter does
not change much. It is taken in the interval $[0.01,0.1]$, which
corresponds very well with the minimum of the a posteriori quantity $|z_h|_{s_W} +
  |u_h|_{s_V}$. For piecewise quadratic approximation there is a
  stronger difference compared to the unperturbed case. The optimal penalty parameter is now taken in
  the interval $[0.5,5]$. The a posteriori quantity takes its minimum
  value in the interval $[0.1,0.5]$. From this
  study we fix the penalty parameter to $\gamma_S=0.05$ for piecewise
  affine approximation and to $\gamma_S=1.0$ in the piecewise quadratic
  case.

Next we study the sensitivity of the error to variations in the
strength of the perturbation, for the chosen penalty parameters. The
results are given in Figure \ref{sens_study}. As expected the global
$L^2$-error is minimal for the perturbation $\varsigma=0.01$. For
smaller perturbations it remains approximately constant, but for
perturbations larger than $1\%$ the error growth is linear in $\varsigma$ for all
quantities as predicted by theory, assuming the stability condition is satisfied uniformly (see Lemma \ref{lem:pert_stab_conv} and Theorem
\ref{thm:pert_conv}.)

Finally we study the convergence under mesh refinement when $\varsigma=0.01$. The results
are presented in Figure \ref{conv_study}. From the
theory we expect the reduction of the error to stagnate or even start
to grow when $h\lesssim \varsigma$. For the piecewise affine
approximation the minimal global $L^2$-error is $0.065$ for $h=0.015625$
and it follows that the stagnation takes place for $h \approx \varsigma$
in this case. For $k=2$ the minimal global
$L^2$-error is $0.047$ for $h=0.03125$, that is one refinement level
earlier than for the piecewise affine case. In both cases we observe
that the convergence of the stabilisation semi-norm degenerates to
worse than first order immediately after the critical mesh-size. The
dotted lines without markers immediately below the curve representing
the a posteriori quantity are reference curves with slopes
$O(h^{1.1})$ for affine elements and $O(h^{1.4})$ for quadratic
elements $k=2$. This rate is suboptimal in the latter case, indicating
a higher sensibility to perturbations for higher
order approximations. It follows that regardless of the smoothness of the
(unperturbed) exact solution, high order approximation only pays if
perturbations in data are small enough so that they do not dominate
before the asymptotic range is reached.
\begin{figure}
\centering
\includegraphics[width=0.47\textwidth]{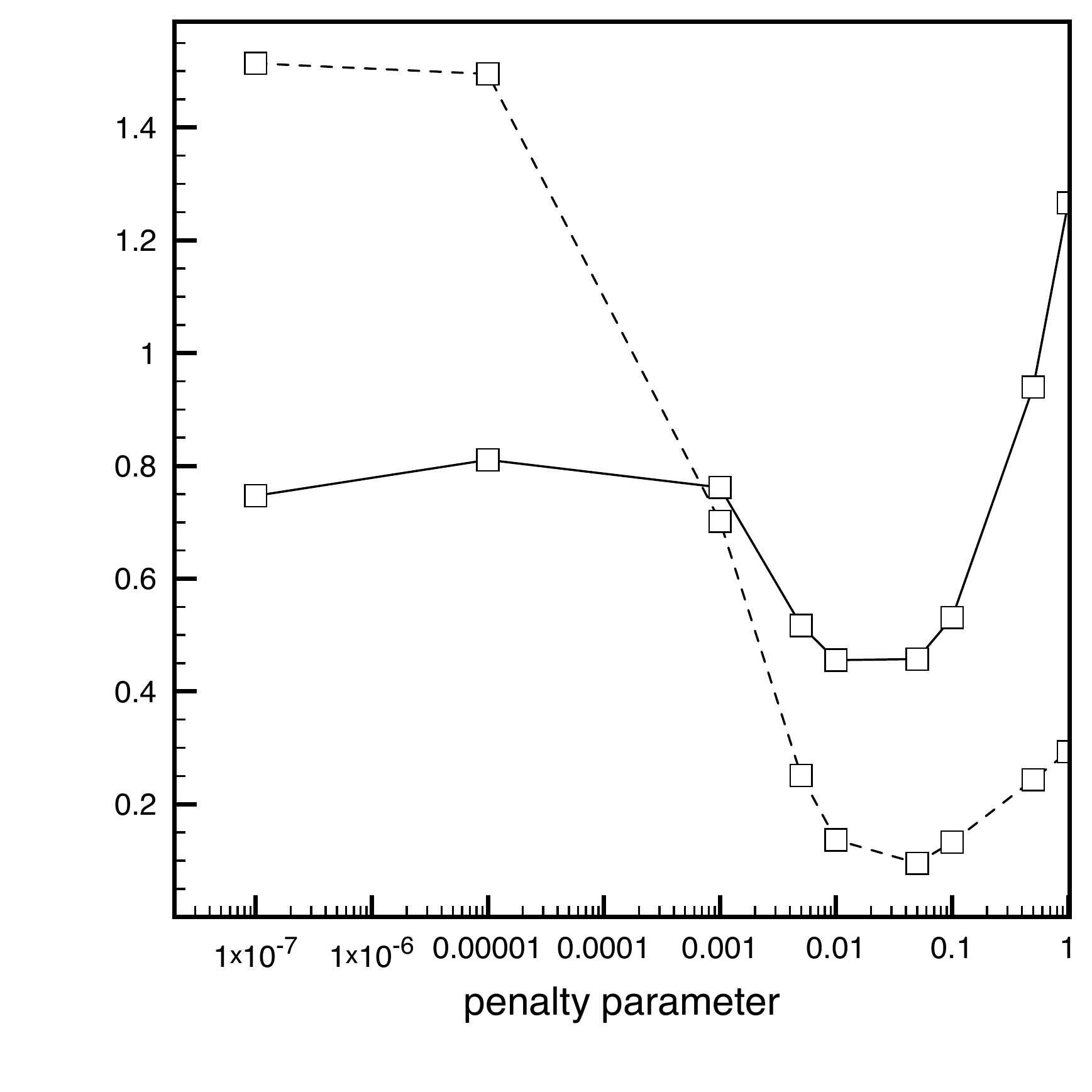}
\includegraphics[width=0.47\textwidth]{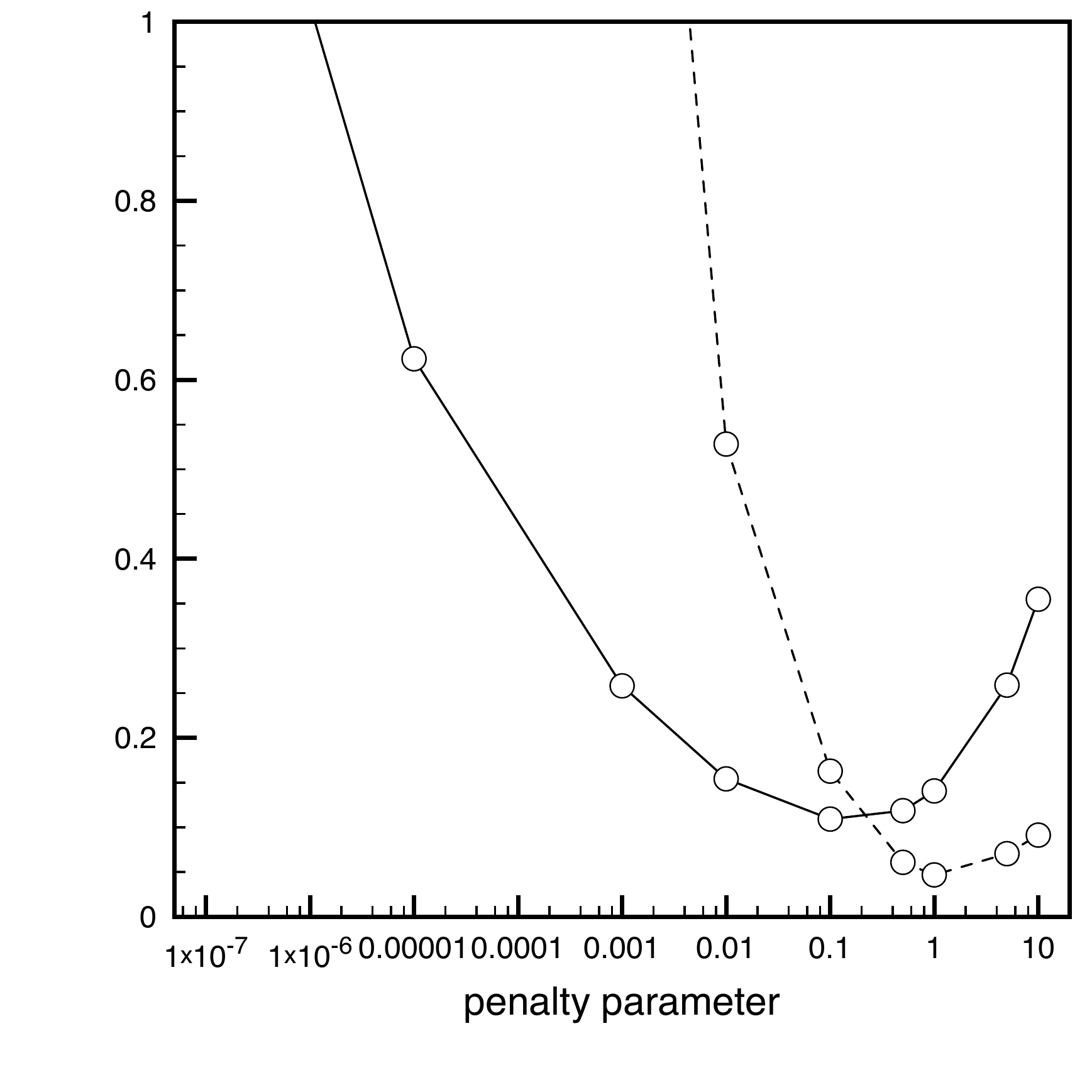}
\caption{Variation of the global $L^2$-error (dashed line) and $|z_h|_{s_W} +
  |u_h|_{s_V}$ (full line) against $\gamma$. Left $k=1$. Right $k=2$.}\label{penalty_pert_study}
\end{figure}
\begin{figure}
\centering
\includegraphics[width=0.47\textwidth]{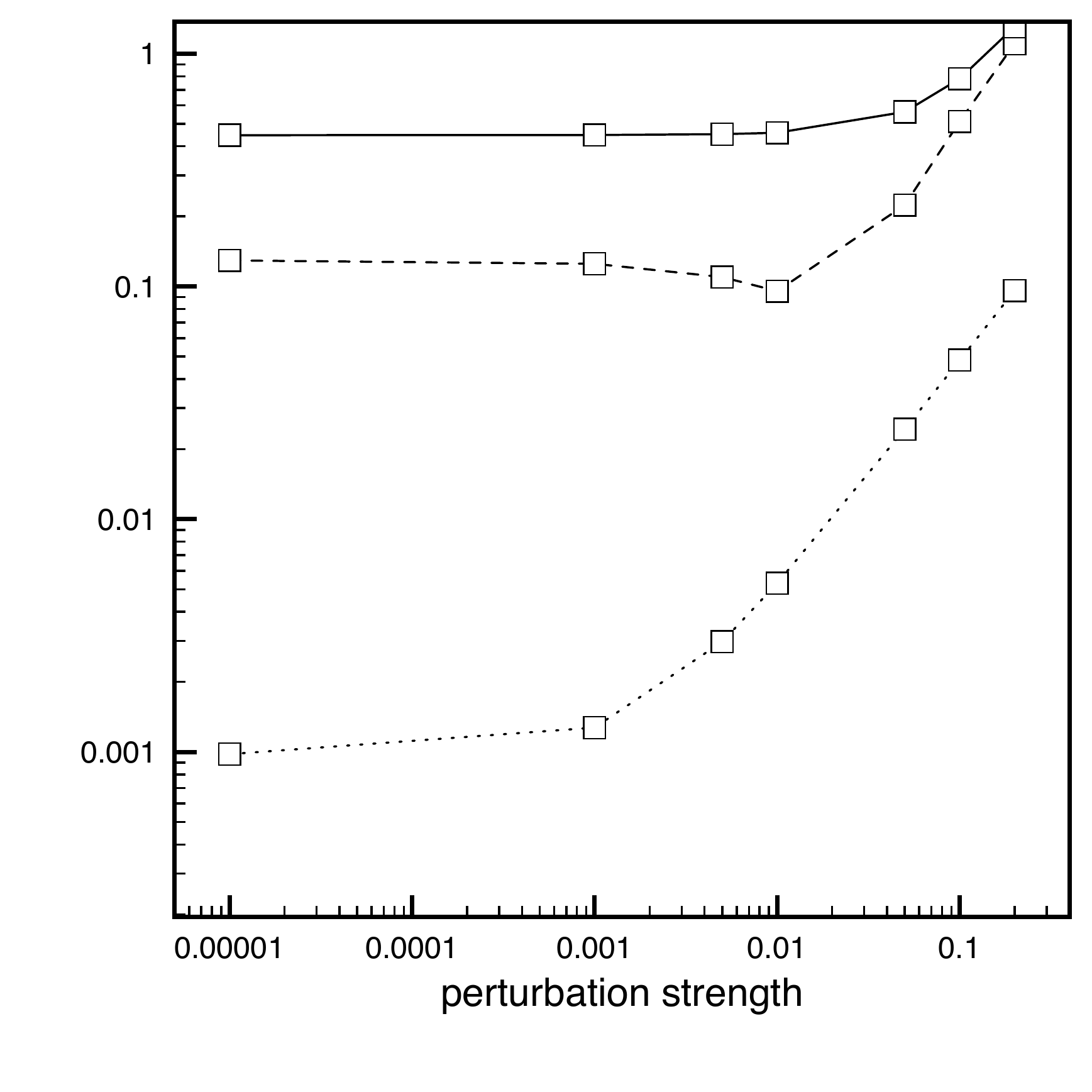}
\includegraphics[width=0.47\textwidth]{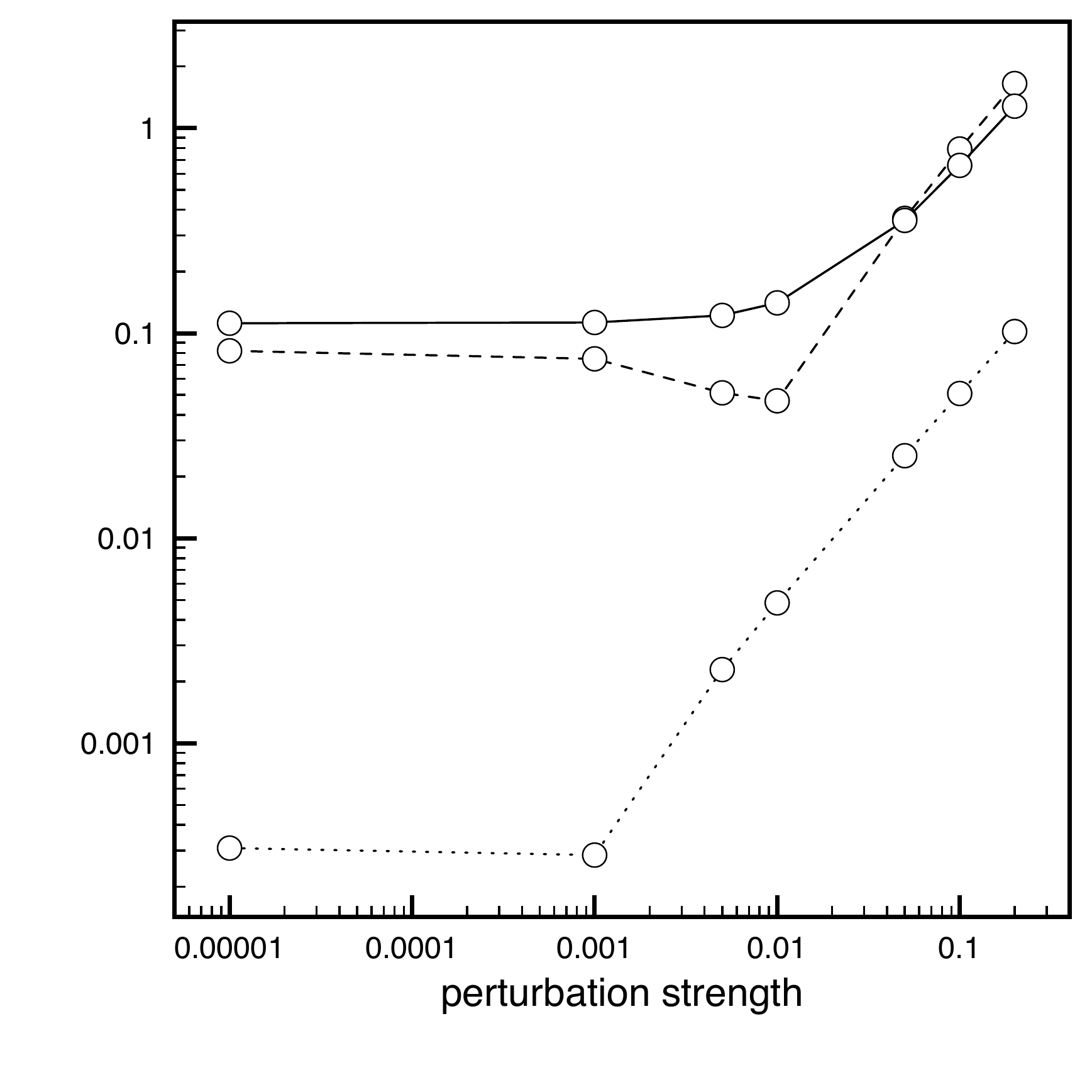}
\caption{Variation of the $L^2$-error (global dashed line, local
  dotted line) and $|z_h|_{s_W} +
  |u_h|_{s_V}$ (full line) against $\varsigma$. Left $k=1$. Right $k=2$.}\label{sens_study}
\end{figure}
\begin{figure}
\centering
\includegraphics[width=0.47\textwidth]{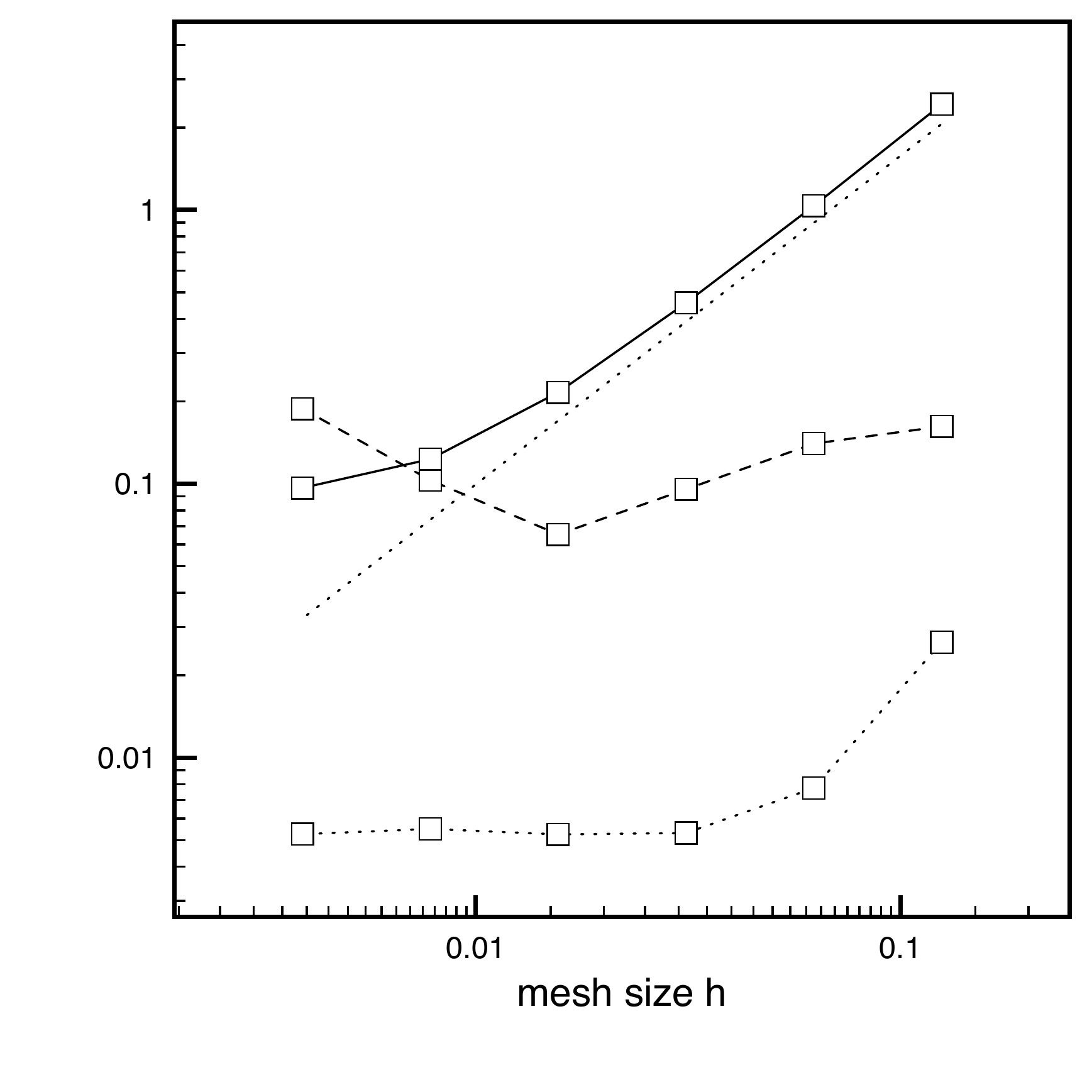}
\includegraphics[width=0.47\textwidth]{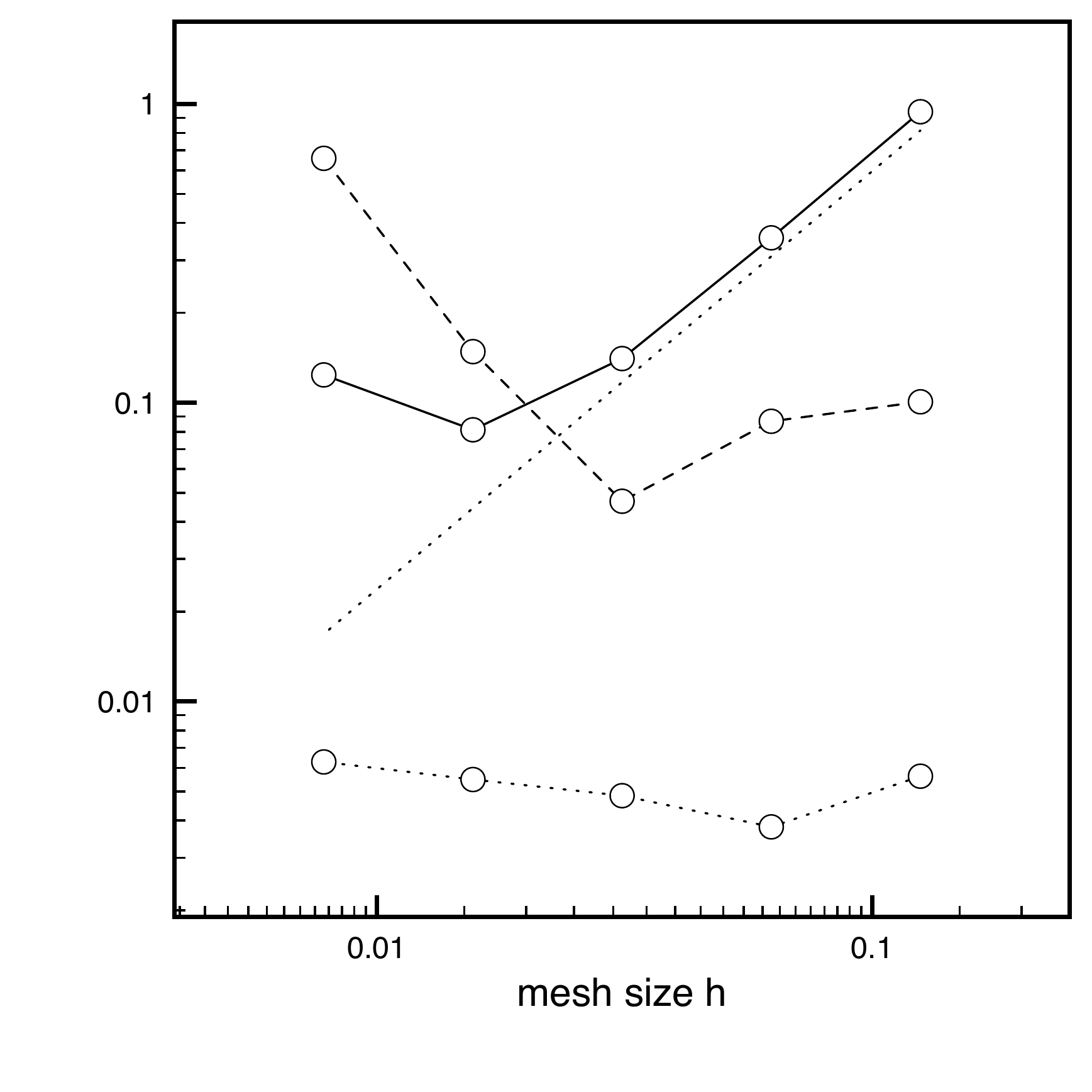}
\caption{Variation of the $L^2$-error (global dashed line, local
  dotted line) and $|z_h|_{s_W} +
  |u_h|_{s_V}$ (full line, with markers) against $h$. Left $k=1$,
  reference $O(h^{1.1})$. Right $k=2$, reference $O(h^{1.4})$}\label{conv_study}
\end{figure}

\subsection{The elliptic Cauchy problem for the convection--diffusion
  operator}
As a last example we consider the Cauchy problem using the
noncoercive convection--diffusion operator \eqref{op:conv_diff}.
The stability of the problem depends strongly on where the boundary conditions
are imposed in relation to the inflow and outflow
boundaries. Strictly speaking this problem is not covered by the
theory developed in \cite{ARRV09}. Indeed in that work the quantitative
unique continuation used the symmetry of the operator. An extension to
the convection-diffusion case is likely to be possible, at least in two space
dimensions, by combining the results of \cite{Ales12} with those of \cite{ARRV09}.

To illustrate the dependence of the stability on how boundary data is
distributed on inflow and outflow boundaries we propose two configurations. Recalling the left 
plot of Figure \ref{velocities_conv} we observe that the flow enters along the boundaries $y=0$, $y=1$ and
$x=1$ and
exits on the boundary $x=0$. Note that the strongest inflow takes
place on $y=0$ and $x=1$, the flow being close to parallel to the
boundary in the right half of the segment $y=1$. We propose the two
different Cauchy problem configurations:
\begin{description}
\item[\bf{Case 1.}] We impose Dirichlet and Neumann data on the two
  inflow boundaries $y=0$ and $x=1$. 
\item[\bf{Case 2.}] We impose  Dirichlet and Neumann data on the two 
  boundaries $x=0$ and $y=1$ comprising both inflow and outflow parts.
\end{description}
The gradient penalty operator has been weighted with the P\'eclet
number as suggested in \cite{Bu13}, to obtain optimal performance in all regimes.
In the first case  the main part of the inflow boundary is included in
$\Gamma_S$ whereas in the second
case the outflow portion {\emph{or}} the inflow portion of every
streamline are included in the boundary portion $\Gamma_S$ where data are set. This highlights
two different difficulties for Cauchy problems for the
convection--diffusion operator, in Case 1 the crosswind diffusion must
reconstruct missing boundary data whereas in Case 2 we must solve the problem
backward along the characteristics, essentially solving a backward
heat equation.

In Figure \ref{Fig:cauchy_conv_diff}, we report the results on the same sequence of unstructured meshes used
in the previous examples for piecewise affine approximations and the two problem configurations.
In the left plot of Figure \ref{Fig:cauchy_conv_diff} we see the
convergence behaviour for Case 1, when piecewise affine approximation
is used. The global $L^2$-norm error clearly reproduces the inverse
logarithmic convergence order predicted by the theory for the
symmetric case. In the right plot
of Figure \ref{Fig:cauchy_conv_diff} we present the convergence plot
for Case 2 (the dotted lines are the same inverse logarithmic
reference curves as in the left plot). In this case we see that the
convergence initially is approximately linear, similarly as that
of the stabilisation term. For finer meshes however the inverse logarithmic
error decay is observed, but with a much smaller constant compared to
Case 1. In Case 1 the diffusion is important on all scales, since some
characteristics have no data neither on inflow or outflow, whereas in
Case 2, data is set either on the inflow or the outflow for all
characteristics of the flow and the effects of diffusion are therefore
much less important, in particular on coarse scales. Indeed the
reduced transport problem in the limit of zero diffusivity, is not ill-posed. As the flow is
resolved the effect of the diffusion once again dominates and the
inverse logarithmic decay reappears.
\begin{figure}
\centering
\includegraphics[width=0.47\textwidth]{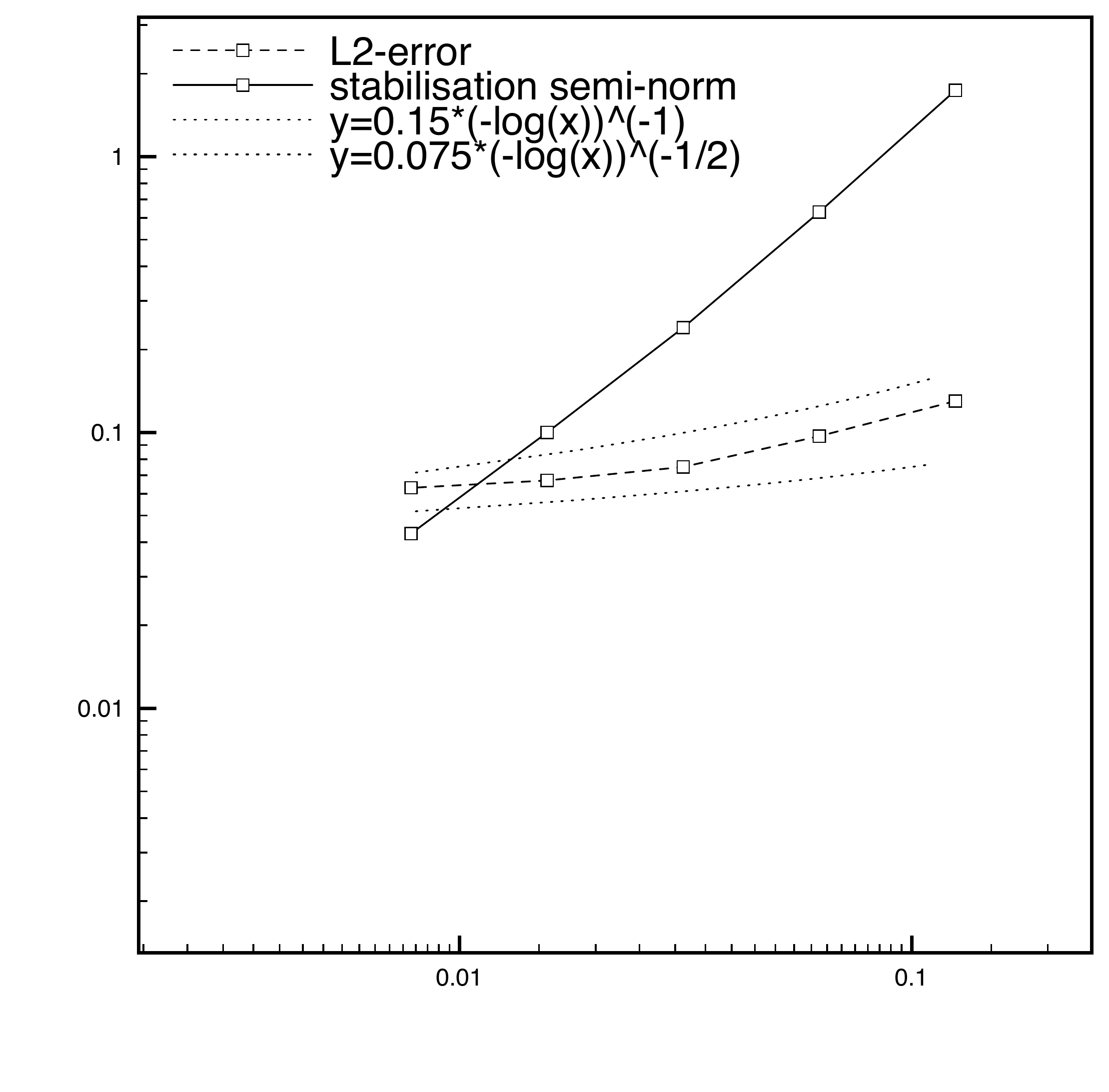}
\includegraphics[width=0.47\textwidth]{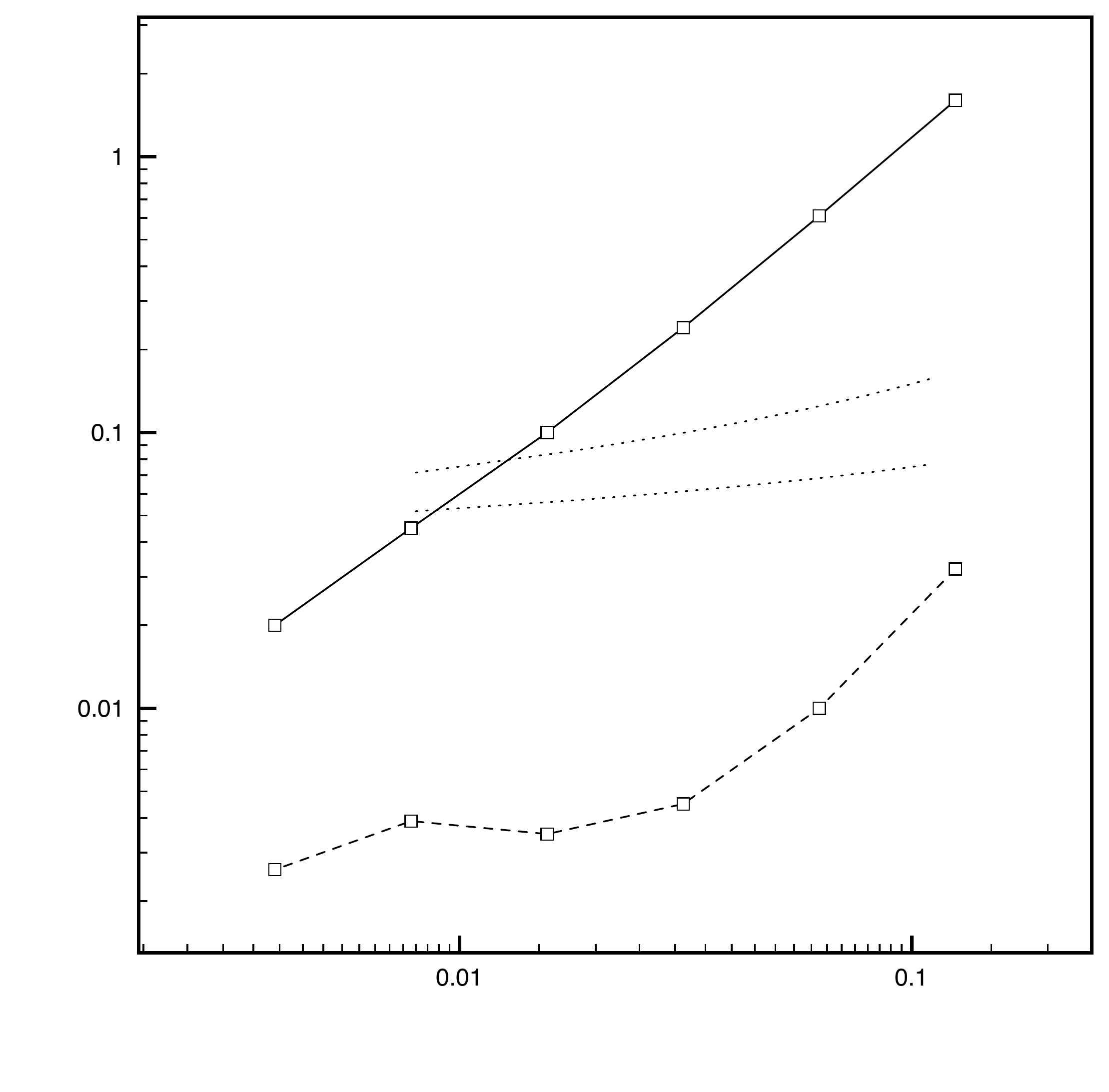}
\caption{Left: convergence for Case 1,  k=1. Right:
  convergence for Case 2, k=1}\label{Fig:cauchy_conv_diff}
\end{figure}
\section{Conclusion}
We have proposed a framework using stabilised finite element methods for the approximation of ill-posed
problems that have a conditional stability property. The key element
is to reformulate the problem as a pde-constrained minimisation
problem that is regularized on the discrete level using tools known
from the theory of stabilised FEM. Using the conditional stability error
estimates are derived that are optimal with respect to the stability of the
problem and the approximation properties of the finite element
spaces. 
The effect of perturbations in data may also be accounted for in the framework and leads to limits on
the possibility to improve accuracy by mesh refinement. Some 
numerical examples were presented illustrating different aspects
of the theory.

There are several open problems both from theoretical and computational
point of view, some of which we will address in future work. Concerning the stabilisation it is not clear
if the primal and adjoint stabilisation operators
should be chosen to be the same, or not?
Does the adjoint consistent choice of stabilisation $s_W$ have any advantages compared
to the adjoint stabilisation \eqref{eq:nonadjointcons_pen}, that gives stronger control of perturbations?
Then comes the question of whether or not high order
approximation (i.e. polynomials of order higher than one) can be
competitive also in the presence of perturbed data? Can the a posteriori error estimate derived
in Theorem \ref{thm:cont_dep} be used to drive adaptive algorithms?
Finally, what is a suitable preconditioner for the linear system? We
hope that the present work will help to stimulate discussion on the
design of numerical methods for ill-posed problems and provide some 
new ideas on how to make a bridge between the regularization methods
traditionally used and (weakly) consistent stabilised finite element
methods.
\section*{Appendix}
We will here give a proof that the inf-sup stability \eqref{disc_stab} holds also
for the stabilisation \eqref{CIP_stab}. We do not track the depedence
on $\gamma_D$ and $\gamma_S$.
\begin{proposition}
Let $A_h[(\cdot,\cdot),(\cdot,\cdot)]$ be defined by \eqref{global_A}
with $a_h(\cdot,\cdot)$, $s_W(\cdot,\cdot)$ and $s_V(\cdot,\cdot)$
defined by equation \eqref{disc_a2}, \eqref{penalty_p2},
\eqref{penalty_a2} and \eqref{CIP_stab} (or \eqref{eq:nonadjointcons_pen} for $s_W(\cdot,\cdot)$). Then the inf-sup condition
\eqref{disc_stab} is satisfied for the semi-norm \eqref{Lnorm}.
\end{proposition}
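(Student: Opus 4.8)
The plan is to establish the inf-sup condition \eqref{disc_stab} by constructing, for a given pair $(\nu_h,\zeta_h)\in V_h\times W_h$, a test pair $(v_h,w_h)$ that realises the supremum up to a constant. The starting point is the same identity used for the GaLS case: testing with $(\nu_h,-\zeta_h)$ gives
\[
A_h[(\nu_h,\zeta_h),(\nu_h,-\zeta_h)] = |\nu_h|_{s_V}^2 + |\zeta_h|_{s_W}^2 .
\]
For the CIP stabilisation \eqref{CIP_stab} this already controls every contribution to $|(\nu_h,\zeta_h)|_{\mathcal{L}}$ in \eqref{Lnorm} \emph{except} the two interior residual terms $\|h\mathcal{L}\nu_h\|_h$ and $\|h\mathcal{L}^*\zeta_h\|_h$, since, contrary to GaLS, these do not appear in the stabilisation. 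The whole difficulty is therefore to recover control of the two volume residuals from the jump seminorms present in \eqref{CIP_stab}.

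To this end I would introduce an auxiliary test function $w_h^{(2)}\in W_h$ designed to extract $\|h\mathcal{L}\nu_h\|_h$. Writing $R:=\mathcal{L}\nu_h$ for the elementwise residual, I take $w_h^{(2)}$ to be an averaging (Oswald) quasi-interpolant of $h^2R$ into $W_h$, with nodal values on $\Gamma_N'$ set to zero so that $w_h^{(2)}\in W$. Integrating by parts elementwise gives
\[
a_h(\nu_h,w_h^{(2)}) = (\mathcal{L}\nu_h,w_h^{(2)})_h + \langle\jump{\partial_n\nu_h},w_h^{(2)}\rangle_{\mathcal{F}_I} + (\text{boundary terms}),
\]
and adding and subtracting $h^2R$ in the first term produces $\|hR\|_h^2$ plus an interpolation defect. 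The key estimate is that this Oswald defect is bounded by the jumps of $R$ across interior faces, which for a continuous $\nu_h$ equal $-\jump{\Delta\nu_h}$, and is hence controlled by $\|h^{3/2}\jump{\Delta\nu_h}\|_{\mathcal{F}_I}$ --- precisely the first CIP term in \eqref{CIP_stab}. Together with the $L^2$-stability of the quasi-interpolant, which yields $|(0,w_h^{(2)})|_{\mathcal{L}}\lesssim\|h\mathcal{L}\nu_h\|_h$ after inverse and trace inequalities, and Young's inequality, this produces
\[
A_h[(\nu_h,\zeta_h),(0,w_h^{(2)})] \gtrsim \tfrac12\|h\mathcal{L}\nu_h\|_h^2 - C\big(|\nu_h|_{s_V}^2 + |\zeta_h|_{s_W}^2\big).
\]
By the analogous argument applied to the adjoint (the model operator being self-adjoint and $s_W^S\equiv s_V^S$), a function $v_h^{(2)}\in V_h$ built from $h^2\mathcal{L}^*\zeta_h$, with nodal values on $\Gamma_D$ set to zero, recovers $\|h\mathcal{L}^*\zeta_h\|_h$.

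I would then conclude by testing with $(v_h,w_h)=(\nu_h,-\zeta_h)+\theta(v_h^{(2)},w_h^{(2)})$ for a sufficiently small $\theta>0$. Bilinearity of $A_h$, the two coercivity-type lower bounds above, and Young's inequality applied to the cross stabilisation terms $s_V(\nu_h,v_h^{(2)})$ and $s_W(\zeta_h,w_h^{(2)})$ give $A_h[(\nu_h,\zeta_h),(v_h,w_h)]\gtrsim|(\nu_h,\zeta_h)|_{\mathcal{L}}^2$, while $|(v_h,w_h)|_{\mathcal{L}}\lesssim|(\nu_h,\zeta_h)|_{\mathcal{L}}$ follows from the norm bounds on the auxiliary functions; dividing yields \eqref{disc_stab}. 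The case where $s_W$ is the stronger stabilisation \eqref{eq:nonadjointcons_pen} is easier: there $|\zeta_h|_{s_W}^2$ already controls $\|\nabla\zeta_h\|_\Omega^2$, so that $\|h\mathcal{L}^*\zeta_h\|_h$ and the extra $\|\zeta_h\|_{H^1(\Omega)}$ term in the norm follow directly by an inverse inequality, and only the primal auxiliary function $w_h^{(2)}$ is required. The main obstacle is exactly the construction and analysis of $w_h^{(2)}$ and $v_h^{(2)}$: showing that the averaging defect is absorbed by the $\jump{\Delta\cdot}$ CIP term, and that modifying the nodal values on $\Gamma_N'$ (respectively $\Gamma_D$) to enforce the discrete boundary conditions does not destroy this bound.
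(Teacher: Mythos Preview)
Your proposal is essentially the same argument as the paper's: both start from the diagonal test $(\nu_h,-\zeta_h)$ to control the stabilisation seminorms, then introduce auxiliary test functions $w_h^{\mathcal{L}}=h^2 I_{os}\mathcal{L}\nu_h$ and $v_h^{\mathcal{L}}=h^2 I_{os}\mathcal{L}^*\zeta_h$ built from an Oswald quasi-interpolant, use the key approximation property $\|h(\Delta\nu_h-I_{os}\Delta\nu_h)\|_h\lesssim s_V^S(\nu_h,\nu_h)^{1/2}$ to absorb the defect into the $\jump{\Delta\cdot}$ CIP term, and conclude by taking a suitable linear combination.

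One unnecessary complication in your write-up: you propose zeroing nodal values of $w_h^{(2)}$ on $\Gamma_N'$ (respectively of $v_h^{(2)}$ on $\Gamma_D$) to enforce $w_h^{(2)}\in W$. For the Cauchy problem treated here the paper takes $V_h=W_h=X_h^k$, the full continuous piecewise-polynomial space with \emph{no} strongly imposed boundary conditions (these are handled weakly through the penalty terms $s_V^D$, $s_W^D$ and the Nitsche-type boundary contributions in $a_h$). Hence no boundary modification of the Oswald interpolant is needed, and the ``main obstacle'' you flag in your last sentence simply does not arise. Apart from this, your outline matches the paper's proof.
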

\begin{proof}
We must prove that the $L^2$-stabilisation of the jump of the
Laplacian gives sufficient control for the inf-sup stability of
$\mathcal{L} u_h$ evaluated elementwise. 
It is well known \cite{BFH06} that for the quasi-interpolation
operator defined in each node $x_i$ by
\[
(I_{os} \Delta u_h)(x_i) := N_i^{-1} \sum_{\{K : x_i \in K\}} \Delta u_h(x_i)\vert_K,
\]
$N_i := \mbox{card} \{K : x_i \in K\}$
the following discrete interpolation result holds 
\begin{equation}\label{oswald_int}
\|h (\Delta u_h - I_{os}
\Delta u_h)\|_h \leq  C_{os} s^S_{V}(u_h,u_h)^{\frac12}
\end{equation}
as well as following the stabilities obtained using trace inequalities, inverse
inequalities and the $L^2$-stability of $I_{os}$,
\begin{equation}\label{Ios_stab}
\|h^{\frac32} I_{os} \Delta u_h\|_{\mathcal{F}} +
\|h^{\frac52} \partial_n I_{os} \Delta u_h\|_{\mathcal{F}} + \|h I_{os} \Delta
u_h\|_h +|h^2 I_{os} \Delta
u_h|_{s_X} \lesssim \|h \Delta u_h\|_h,
\end{equation}
where $X=V,W$.
First observe that by taking $(v_h,w_h)=(u_h,z_h)$ we have
\[
|u_h|_{s_V}^2 + |z_h|_{s_W}^2 = A_h[(u_h,z_h),(u_h,z_h)].
\]
Now let $w^{\mathcal{L}}_h=h^2 I_{os} \mathcal{L} u_h  = h^2 (I_{os} \Delta u_h + c
u_h)$, $v^{\mathcal{L}}_h=h^2 I_{os} \mathcal{L}^* z_h$. Using \eqref{Ios_stab} it
is straightforward to show that
\begin{equation}\label{Ios_stab2}
\|h^{\frac32} I_{os} \mathcal{L} u_h\|_{\mathcal{F}} +
\|h^{\frac52} \partial_n I_{os} \mathcal{L} u_h\|_{\mathcal{F}} + \|h  I_{os} \mathcal{L} u_h\|_h +|h^2 I_{os} \mathcal{L} u_h|_{s_X} \leq \tilde C_{os} \|h \mathcal{L}  u_h\|_h.
\end{equation}
Now observe that (for a suitably chosen orientation of the normal on
interior faces)
\begin{multline*}
a_h(u_h,w^{\mathcal{L}}_h) = \|h \mathcal{L} u_h\|^2_h +  (\mathcal{L} u_h, h^2
(I_{os} \mathcal{L} u_h - \mathcal{L} u_h))_h +  \left<\jump{\partial_n u_h}, h^2
I_{os} \mathcal{L} u_h \right>_{\mathcal{F}_I}\\
 + \left<\partial_n u_h, h^2
I_{os} \mathcal{L} u_h \right>_{\Gamma_N}  + \left<\partial_n h^2
I_{os} \mathcal{L} u_h, u_h \right>_{\Gamma_D} \\
\ge \frac12 \|h
\mathcal{L} u_h\|^2_h -  2 \|h^2 (I_{os} \mathcal{L} u_h -
\mathcal{L} u_h)\|_h^2- 2 \tilde C_{os}^{-2}  s_V^D(u_h,u_h)\\
\ge  \frac12 \|h
\mathcal{L} u_h\|^2_h - 2 C^2_{os}  s_V^S(u_h,u_h)-
2 \tilde C_{os}^{-2}  s_V^D(u_h,u_h) \\
\ge \frac12 \|h
\mathcal{L} u_h\|^2_h - 2(C^2_{os} +  \tilde C_{os}^{-2}) |u_h|_{s_V}^2
\end{multline*}
and
\[
s_W(z_h,w^{\mathcal{L}}_h) \ge -\tilde C_{os}^{-2} |z_h|_{s_W}^2 -
\frac14 \|h \mathcal{L}  u_h\|_h^2.
\]
Similarly
\[
a_h(v^{\mathcal{L}}_h,z_h) \ge \frac12 \|h
\mathcal{L}^* z_h\|^2_h - 2(C^2_{os} +  \tilde C_{os}^{-2}) |z_h|_{s_W}^2
\]
and
\[
s_V(u_h,v^{\mathcal{L}}_h) \ge - \tilde C_{os}^{-2} |u_h|_{s_V}^2 - \frac14\|h \mathcal{L}^*  z_h\|_h^2.
\]
It follows that for some $c_1,c_2>0$ there holds
\[
|(u_h,z_h)|_{\mathcal{L}}^2 \lesssim A_h[(u_h,z_h),(u_h+ c_1 w^{\mathcal{L}}_h ,z_h+ c_2
v^{\mathcal{L}}_h)].
\]
We conclude by observing that by inverse inequalities and \eqref{Ios_stab2} we have the
stability
\[
|(u_h+ c_1 w^{\mathcal{L}}_h,z_h+ c_2
v^{\mathcal{L}}_h)|_{\mathcal{L}} \lesssim |(u_h,z_h)|_{\mathcal{L}}.
\]
\qed
\end{proof}
\bibliographystyle{abbrv}
\bibliography{Cauchy}

\end{document}